\let\eps\varepsilon
\let\phi\varphi
\def\dist{\mathrm{dist}}
\def\P{\mathcal{P}}
\def\R{\mathbb{R}}
\def\N{\mathbb{N}}
\def\1{\mathbbm{1}}
\def\Lip{\mathrm{Lip}}
\def\spr#1#2{\langle{#1,#2}\rangle}
\def\D{\mathcal{D}} 
\def\leb{\mathcal{L}} 
\newcommand{\mytau}{\tau}
\newcommand{\myeps}{\theta}
\newcommand{\myepszero}{\theta}
\newcommand{\myepsone}{\theta'}
\newcommand{\tauzero}{\tau}
\newcommand{\tauone}{\tau'}
\newcommand{\epszero}{\eps}
\newcommand{\epsone}{\eps'}
\newcommand{\Nzero}{N}
\newcommand{\None}{N'}
\newcommand{\myseq}{\eps}
\newcommand{\myeta}{\theta}
\newcommand{\myr}{\rho}
\newcommand{\myI}{R}
\newcommand{\myconst}{250}
\newcommand{\mycons}{240}
\newcommand{\comp}{^\text{comp}}
\newcommand{\lebm}[1]{\mathcal L(#1)}
\newcommand{\lebmi}[1]{\mathcal L(#1)}
\newcommand{\lip}{\operatorname{Lip}}
\newcommand{\abs}[1]{\left|#1\right|}
\newcommand{\norm}[1]{\left\|#1\right\|}
\newcommand{\lnorm}[2]{\left\|#2\right\|_#1}
\newcommand{\inorm}[1]{\left\|#1\right\|_\infty}
\newcommand{\set}[1]{\left\{#1\right\}}
\newcommand{\br}[1]{\left(#1\right)}
\newcommand{\sqbr}[1]{\left[#1\right]}
\newcommand{\sk}[2]{\langle{#1},{#2}\rangle}
\newcommand{\osci}[2]{\operatorname{osc}_{#1}(#2)}
\newcommand{\diam}{\operatorname{diam}}
\newcommand{\interior}{\operatorname{Int}}
\newcommand{\image}{\operatorname{Im}}
\newcommand{\Diff}{\operatorname{Diff}}
\newcommand{\barf}{\overline{f}}
\newcommand{\barg}{\overline{g}}
\newcommand{\metric}{\zeta}
\newcommand{\A}{\mathscr{A}}
\newcommand{\nullA}{M}
\newcommand{\Fset}{X}
\newcommand{\ess}{\operatorname{ess}}
\newcommand{\lipd}{\lip_{1}([0,1]^{d},\R)}
\newcommand{\lipdl}{\lip_{1}([0,1]^{d},\R^{l})}
\newcommand{\lipint}{\lip_{1}([0,1],\R)}
\newcommand{\piset}{G}
\newcommand{\piiset}{H}
\newcommand{\good}{\mathbb G}
\newcommand{\bad}{\mathbb B}
\newcommand{\ggood}{G}
\newcommand{\bbad}{B}
\newcommand{\myd}{\rho}
\newcommand{\Sphere}{\mathbb{S}}
\newcommand{\Fonedim}{F}
\newcommand{\Fhidim}{\mathscr{F}}
\newcommand{\Finf}{F_\infty}
\newcommand{\Einf}{E_\infty}
\newtheorem{theorem}{Theorem}[section]
\newtheorem{prp}[theorem]{Proposition}
\newtheorem{lemma}[theorem]{Lemma}
\newtheorem{cor}[theorem]{Corollary}
\theoremstyle{definition}
\newtheorem{defn}[theorem]{Definition}
\newtheorem{remark}[theorem]{Remark}
\newtheorem*{remark*}{Remark}
\numberwithin{equation}{section}
\title{
	A dichotomy of sets via typical~differentiability.
}
\author{Michael Dymond\thanks{The first named author acknowledges the support of Austrian Science Fund (FWF): P 30902-N35 and of the EPSRC grant EP/N027531/1.}, Olga Maleva\thanks{The second named author acknowledges the support of the EPSRC grant EP/N027531/1.}}
\begin{document}
	\maketitle
	\begin{abstract}
		We obtain a criterion for an analytic subset of a Euclidean space to contain points of differentiability of a typical Lipschitz function, namely, that it cannot be covered by countably many sets, each of which is closed and purely unrectifiable (has zero length intersection with every $C^1$ curve). 
		Surprisingly, we establish 
		that any set failing this criterion witnesses the opposite extreme of typical behaviour: In any such coverable set a typical Lipschitz function is everywhere severely non-differentiable.
	\end{abstract}

\section{Introduction}
Whilst the classical
Rademacher Theorem guarantees that 
every set of positive (outer) Lebesgue measure in a Euclidean space $\R^d$ contains points of differentiability of every Lipschitz function on $\R^d$,
a major direction in geometric measure theory research of the last two decades 
was to explore to what extent this is true for Lebesgue null subsets of $\R^d$. 
It was shown in the 1940s~\cite{choquet,zahorski} that for any null set $N\subseteq\R$ there is a Lipschitz function $f\colon\R\to\R$ nowhere differentiable in $N$.
In contrast, 
for any $d\ge2$ there are Lebesgue null sets in which every Lipschitz function $\R^{d}\to\R$ has points of differentiability, see~\cite{Preiss_1990,Dore_Maleva3,dymond_maleva2016}. 
	Sets with the latter property are called universal differentiability sets (UDS). 
	
But if there is a Lipschitz function nowhere differentiable on a given set $N$, 
one naturally wonders what happens with a typical (in the sense of Baire category -- see exact definition below) Lipschitz function on $N$.
Classical results suggest that typical functions exhibit the worst possible differentiability behaviour, e.g.\ a typical continuous function on an interval is nowhere differentiable, see~\cite{Banach1931}.
Surprisingly, the complete opposite may be true in spaces of Lipschitz functions, even in spaces of Lipschitz functions restricted to some non-UDS $N$.
In dimension one,~\cite{preiss_tiser94}
shows that 
$N\subseteq\R$ can be covered by a countable union of closed null sets if and only if  a typical $1$-Lipschitz function $\R\to\R$ has no points of differentiability in $N$.
It can be seen from the proof in~\cite{preiss_tiser94} that for all other analytic sets, a typical $1$-Lipschitz function will be differentiable at a point inside the set. 

In the present paper we settle the question of differentiability of a typical Lipschitz function inside a given analytic subset $N$ of $\R^d$, $d\ge2$. We
give a complete characterisation of the subsets $N$ of $\R^d$ in which a typical $1$-Lipschitz function has points of differentiability:
they cannot be covered by an $F_\sigma$ purely unrectifiable set; we refer to such sets as \emph{typical differentiability sets} (a simple example is a $C^1$-curve in $\R^d$). 
We also show that for all remaining sets $N$
a typical $1$-Lipschitz function is nowhere differentiable, even directionally, inside~$N$.

We formally state our main results in the next section; see Theorems~\ref{thm:typ_ds} and~\ref{thm:typ_nds}, which imply a dichotomy between typical differentiability and typical non-differentiability sets for every dimension $d\ge1$, see Corollary~\ref{cor:typ_dich}.

Note that universal differentiability sets form a subclass of typical differentiability sets. 
Although to date there is no geometric-measure criterion for a set to be a UDS,
it has been established that UDS may be extremely small, e.g.\ compact and have Minkowski dimension $1$, see~\cite{dymond_maleva2016}. This demonstrates the extent to which the $F_\sigma$-null criterion from~\cite{preiss_tiser94} fails in higher dimensions: in dimension one countable unions of closed null sets are typical non-differentiability sets, but in all higher dimensions they may actually capture differentiability points of every Lipschitz function.	
We expect that, in the same spirit as for UDS, typical differentiability sets 
will be explored further, in particular, providing insight into typical behaviour of Lipschitz functions on non-Euclidean spaces; in this context one should mention recent research into UDS in
Heisenberg and, more generally, Carnot groups~\cite{UDS1,UDS2,UDS3}. 

Let us be more precise about the terminology we use. The present paper will not be excessively concerned with the measurability of subsets of Euclidean spaces, and so we will use the term \emph{measure} in the sense of Hausdorff measure, as in~\cite{mattila_1995}. This includes both the Lebesgue and outer Lebesgue one-dimensional measure, which we denote by $\leb$. 
A Lipschitz mapping with Lipschitz constant less than or equal to one is referred to as $1$-Lipschitz; let $\lip_{1}([0,1]^{d})$ denote the set of all $1$-Lipschitz functions $f:[0,1]^d\to\R$, viewed as a complete metric space when equipped with the metric
$\rho(f,g)=\lnorm{\infty}{g-f}$. For any Lipschitz mapping $f$ let $\Diff(f)$ denote the set of $t$ such that $f$ is differentiable at $t$. We say that a typical $1$-Lipschitz function has a certain property, if the set of those $f\in\lip_{1}([0,1]^{d})$ with this property is a residual subset of $\lip_{1}([0,1]^{d})$, i.e.\ its complement is meagre (in other words, is of first category). 

We refer to a set $S\subseteq(0,1)^d$ as \emph{typical differentiability set} if a typical $1$-Lipschitz function has points of differentiability in $S$, i.e.\ $\Diff(f)\cap S\ne\emptyset$. Let us also refer to subsets of $(0,1)^d$ in which a typical $1$-Lipschitz function has no points of differentiability as \emph{typical non-differentiability sets}.
A priori, a set $S\subseteq(0,1)^d$ may have exactly one of these two properties, or none;
we show in Corollary~\ref{cor:typ_dich} that for analytic $S$ `none' is impossible.

We would like to add that a very recent advance in this area, primarily for vector-valued Lipschitz mappings to Euclidean spaces of at least the same dimension, was made by Merlo~\cite{merlo}. 

It is worth mentioning further specific details of the aforementioned works~\cite{preiss_tiser94} and~\cite{merlo} which are of relevance to the present paper. Recall that~\cite{preiss_tiser94} characterises typical non-differentiability sets in $[0,1]$ as those sets which can be covered by countably many closed sets of measure zero. It also gives a sufficient condition for a set to be a typical differentiability set, via the property of having `every portion of positive measure'.
We now give a definition of this notion and its higher dimensional analogue.
\begin{defn}\label{def:every_portion}
\begin{enumerate}[(i)]
	\item\label{ev_prt_pos_ms} We say that a set $\Fonedim\subseteq \R$ has \emph{every portion of positive measure} if for every open set $U\subseteq \R$ with $U\cap \Fonedim\neq \emptyset$ we have that $\leb(U\cap \Fonedim)$ is positive.
	
	\item\label{ev_prt_pos_cw} 	We say that a set $\Fhidim\subseteq \R^{d}$ has \emph{every portion of positive cone width} if for every open set $U\subseteq \R^{d}$ with $U\cap \Fhidim\ne\emptyset$ there exists a $C^1$-smooth curve  $\nu_U\colon [0,1]\to\R^{d}$ with nowhere zero derivative such that $\leb(\nu_U^{-1}(U\cap \Fhidim))$ is positive.
	\end{enumerate}
\end{defn}
\begin{remark}\label{rem:portion}
If a set $\Fhidim$ has every portion of positive cone width and $a>0$, then the curve $\nu_U$ may always be chosen so that it additionally satisfies $\norm{\nu_U'(t)}=a$ for all $t\in(0,1)$.

Also, in Section~\ref{sec:construction} we introduce the notation $\Gamma_\Fhidim(U)$, to denote the collection of all $C^{1}$-smooth curves $\gamma$ with codomain $U$ and $\leb(\gamma^{-1}(\Fhidim))>0$. We may note here that if
$\Fhidim$ has every portion of positive cone width, the set $U$ is open with $U\cap \Fhidim\ne\emptyset$ and $a>0$, 
then there exists $\nu_U\in\Gamma_\Fhidim(U)$ such that
$\norm{\nu_U'(t)}=a$ for all~$t$.
\end{remark}
Note that the two notions~\eqref{ev_prt_pos_ms} and~\eqref{ev_prt_pos_cw} coincide in dimension $d=1$. Petruska~\cite[Theorem~1]{petruska93} proves that analytic subsets of $[0,1]$ not coverable by a union of countably many closed, measure zero sets can be characterised as those sets $E\subseteq [0,1]$ for which there exists a closed set $\Fonedim\subseteq [0,1]$ having every portion of positive measure such that $E\cap \Fonedim$ is relatively residual in $\Fonedim$. 

\begin{defn} We will use the term \emph{Lipschitz curve} to refer to a Lipschitz mapping $\gamma\colon I\to\R^{d}$, where $I\subseteq \R$ is a closed interval, with the property that the derivative $\gamma'$ is bounded away in magnitude from zero almost everywhere.
	
A set $P\subseteq \R^{d}$ is said to be \emph{purely unrectifiable} if for every Lipschitz curve $\gamma\colon [0,1]\to\R^{d}$ the set $\gamma^{-1}(P)$ has Lebesgue measure zero.
\end{defn}
The class of purely unrectifiable sets is widely regarded as the most exceptional in relation to differentiability of Lipschitz functions. Moreover, recently M\'athe has announced that, within the class of Borel sets, purely unrectifiable sets coincide with the formally smaller class of uniformly purely unrectifiable sets (see~\cite{maleva_preiss2018}, Definition~1.4 and Remark 1.7). Alberti, Cs\"ornyei and Preiss prove in~\cite{acp2010differentiability} that any uniformly purely unrectifiable set $P\subseteq \R^{d}$ admits a Lipschitz function $f\colon \R^{d}\to\R$ which fails to have any directional derivatives in the set $P$. A strengthening of this is proved by the second named author and Preiss in~\cite[Theorem~1.13]{maleva_preiss2018}: such a function $f$ may be constructed so that at all $x\in P$, the function $f$ is non-differentiable at $x$ in the strongest possible sense:
\begin{equation*}
\liminf_{r\to 0}\sup_{\norm{y}\leq r}\frac{\abs{f(x+y)-f(x)-\sk{e}{y}}}{r}=0
\end{equation*}
for \emph{every} $e\in \R^{d}$ with $\norm{e}\leq 1$. This condition expresses that \emph{every} linear mapping $\R^{d}\to\R$ of norm at most one behaves as the derivative of $f$ along a certain subsequence approaching $x$.
In Section~\ref{sec.proof} we show that the results of~\cite{maleva_preiss2018} are extremely relevant to typical non-differentiability; see Theorem~\ref{thm:main:nondiff}.

To find a characterisation of typical differentiability sets in higher dimensional Euclidean spaces, one might seek higher dimensional analogues of interval subsets not coverable by unions of countably many closed null sets. However, as explained earlier, the same notion cannot work, in particular because there are closed, null universal differentiability sets. We verify that countable unions of closed purely unrectifiable sets, which coincide with countable unions of closed null sets in the case $d=1$ are the fitting choice; see the characterisation given in Theorems~\ref{thm:typ_ds} and~\ref{thm:typ_nds}.
Merlo~\cite{merlo} also proposes that the correct higher dimensional analogues of typical non-differentiability sets for vector-valued Lipschitz mappings are those subsets of $[0,1]^{d}$ which can be covered by a union of countably many closed, purely unrectifiable sets.

\textbf{Acknowledgement} The authors would like to thank David Preiss for very helpful and valuable discussions.
We would also like to thank the organisers of the $48^\text{th}$ Winter School in Abstract Analysis, Czech Republic, and in particular Martin Rmoutil for stimulating conversations.

\section{Main Results}
\subsection{Statement of main results}
In the present section we set out the structure of the proof of our main results, Theorems~\ref{thm:typ_ds} and~\ref{thm:typ_nds}:
\begin{theorem}\label{thm:typ_ds}
	Let $d\geq 1$. The following are equivalent for an analytic set $\A\subseteq (0,1)^{d}$:
	\begin{enumerate}[(a)]
		\item\label{ncoverable} The set $\A$ cannot be covered by an $F_{\sigma}$, purely unrectifiable set.
		\item\label{diff_pts} A typical $f\in\lip_{1}([0,1]^{d})$ has points of differentiability in $\A$, \\ i.e.\ $\A$ is a typical differentiability subset of $(0,1)^d$.
	\end{enumerate}
\end{theorem}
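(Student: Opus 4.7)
My plan is to prove each implication separately. For (\ref{diff_pts})$\Rightarrow$(\ref{ncoverable}) I argue by contraposition: if $\A$ is covered by a countable union $\bigcup_n P_n$ of closed, purely unrectifiable sets, I invoke the construction of Maleva--Preiss~\cite{maleva_preiss2018} to produce, for each $P_n$, a $1$-Lipschitz function failing to have any directional derivative on $P_n$ in the strongest possible sense. A routine Baire category argument then shows that, for each $n$, the set of those $f\in\lip_{1}([0,1]^{d})$ possessing a differentiability point in $P_n$ is meagre; a countable union over $n$ remains meagre, so a typical $f$ has no differentiability point in $\A$. This essentially records the content of Theorem~\ref{thm:typ_nds}.

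The substantive direction (\ref{ncoverable})$\Rightarrow$(\ref{diff_pts}) proceeds in two stages. The first is a higher-dimensional analogue of Petruska's theorem~\cite{petruska93}: starting from an analytic $\A$ not coverable by an $F_{\sigma}$ purely unrectifiable set, I extract a closed set $\Fhidim\subseteq[0,1]^{d}$ having every portion of positive cone width in the sense of Definition~\ref{def:every_portion}(\ref{ev_prt_pos_cw}), such that $\A\cap\Fhidim$ is relatively residual in $\Fhidim$. The observation making this reduction possible is that the obstruction to being covered by a closed purely unrectifiable set is precisely the existence of a curve $\gamma\in\Gamma_\Fhidim(U)$ on some portion, so the classical Petruska extraction goes through with \emph{positive measure portion} replaced by \emph{positive cone width portion}. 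The second stage is the core differentiability statement: for any closed $\Fhidim\subseteq[0,1]^{d}$ with every portion of positive cone width and any relatively residual $E\subseteq\Fhidim$, a typical $f\in\lip_{1}([0,1]^{d})$ has a point of differentiability in $E$. Applied to $E$ chosen as a dense $G_{\delta}$ subset of $\Fhidim$ contained in $\A$, this yields (\ref{diff_pts}).

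The heart of the proof, and the expected main obstacle, is this second stage. The natural approach is a Baire category construction indexed by a countable dense family of admissible curves $\gamma\in\Gamma_\Fhidim(U)$, ranging over a countable basis of open $U$ and normalised via Remark~\ref{rem:portion}, producing a countable intersection $\bigcap_n\mathcal{G}_n$ of open dense subsets of $\lip_{1}([0,1]^{d})$ whose members will possess the required differentiability point. The key difficulty is that differentiability of $f\circ\gamma$ at some $t_0$ does not by itself imply differentiability of $f$ at $\gamma(t_0)\in\R^{d}$: quantitative control is needed on the behaviour of $f$ transverse to the curve. I expect the construction of Section~\ref{sec:construction} to address this by iteratively refining a nested sequence of curves at successive scales, exploiting the cone width property at each scale to drive the transverse oscillations of $f$ to zero at a limiting base point, while relative residuality of $E$ in $\Fhidim$ keeps this base point inside $E$.
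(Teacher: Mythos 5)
Your overall architecture matches the paper's. The implication \eqref{diff_pts}$\Rightarrow$\eqref{ncoverable} is indeed handled by contraposition through the typical non-differentiability result (Theorem~\ref{thm:main:nondiff} in the paper, proved via a Banach--Mazur game built on \cite[Lemma~2.9]{maleva_preiss2018}); note that this is slightly more than a ``routine'' category argument, since one needs \emph{perturbation stability} of the non-differentiability on each $P_n$, not merely one bad function per $P_n$ --- but that stability is exactly what the cited lemma supplies, so your reduction is sound. For \eqref{ncoverable}$\Rightarrow$\eqref{diff_pts}, your first stage coincides with Proposition~\ref{th:sol} (obtained in the paper from Solecki's covering theorem rather than by redoing Petruska's extraction), and your second stage corresponds to the combination of Theorem~\ref{lemma:key} and Theorem~\ref{thm:bm}.

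There is, however, a genuine gap at the heart of the second stage. You correctly isolate the key difficulty --- differentiability of $f\circ\gamma$ at $t_0$ does not give differentiability of $f$ at $\gamma(t_0)$ --- but your proposed resolution, driving ``the transverse oscillations of $f$ to zero'' by ``iteratively refining a nested sequence of curves at successive scales, exploiting the cone width property'', cannot work as stated: the cone width property and the curve refinement control the geometry of $\gamma$ and of the set $\Fhidim$, not the behaviour of a typical $f$ transverse to the curve, and there is no mechanism for forcing a typical $1$-Lipschitz function to oscillate little off a fixed one-dimensional set. The idea you are missing is the following. In the Banach--Mazur game Player~II plays \emph{conical} perturbations of the form $x\mapsto f(\gamma(p))+\tau\norm{x-\gamma(p)}$ with $\tau\uparrow 1$, which force the limit function $f$ to satisfy $(f\circ\gamma)'(t^{*})=\pm1$ at the limiting point $t^{*}\in E$; since $\Lip(f)\le 1$ and $\norm{\gamma'(t^{*})}=1$, the directional derivative of $f$ at $\gamma(t^{*})$ attains the maximal possible magnitude, and \emph{full} differentiability then follows from the maximality principle for Lipschitz functions (\cite[Corollary~2.6]{fitzpatrick1984differentiation}). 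Without this step --- attaining the extreme slope along the curve and invoking the maximality principle --- your plan does not close. Separately, the iterative curve construction of Section~\ref{sec:construction} serves a different purpose than you ascribe to it: it produces a single curve $\gamma$ with $\gamma(E)\subseteq\A$, unit tangents and vanishing oscillation of $\gamma'$ at points of $E$, thereby reducing the problem to the one-curve statement of Theorem~\ref{thm:bm}; the function-space game is then run for that one fixed curve rather than over a countable dense family of curves.
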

\begin{theorem}\label{thm:typ_nds}
	Let $d\geq 1$. The following are equivalent for an analytic set $\A\subseteq (0,1)^{d}$:
	\begin{enumerate}[(i)]
		\item\label{coverable} The set $\A$ is contained in an $F_{\sigma}$, purely unrectifiable set.
		\item\label{nwh_diff} A typical $f\in\lip_{1}([0,1]^{d})$ is nowhere differentiable in $\A$, \\ i.e.\ $\A$ is a typical non-differentiability subset of $(0,1)^d$.
	\end{enumerate}
\end{theorem}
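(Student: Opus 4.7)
The proof has two implications. The easy direction, (ii)~$\Rightarrow$~(i), is the contrapositive of Theorem~\ref{thm:typ_ds}: if $\A$ were not contained in any $F_\sigma$ purely unrectifiable set, then Theorem~\ref{thm:typ_ds} would produce a typical $f \in \lipd$ with a differentiability point in $\A$, contradicting~(ii). For the harder direction, (i)~$\Rightarrow$~(ii), write $\A \subseteq \bigcup_n F_n$ with each $F_n$ closed and purely unrectifiable, and use that residual sets are closed under countable intersection to reduce to the following: for every closed purely unrectifiable $F \subseteq (0,1)^d$, the set
\[
S(F) := \set{f \in \lipd : \forall (x,e) \in F \times \B,\ \liminf_{r \to 0^+} \sup_{\norm{y} \leq r} \tfrac{\abs{f(x+y)-f(x)-\sk{e}{y}}}{r} = 0}
\]
is residual in $\lipd$. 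Since $S(F)$ is plainly contained in $\set{f : f\text{ is nowhere differentiable on }F}$, taking $\bigcap_n S(F_n)$ yields~(ii); this also delivers the stronger conclusion announced in the abstract, that a typical $f$ is everywhere severely non-differentiable on $\A$.

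To see $S(F)$ is $G_\delta$, define
\[
G_k(f) := \max_{(x,e) \in F \times \B}\ \inf_{r \in (0, 1/k]}\ \sup_{\norm{y} \leq r} \tfrac{\abs{f(x+y)-f(x)-\sk{e}{y}}}{r}.
\]
The innermost expression is jointly continuous in $(f,x,e,r)$ for $r > 0$ (by Hausdorff continuity of the closed balls $\set{y:\norm{y}\leq r}$, uniform continuity of 1-Lipschitz functions and continuity of $(e,y)\mapsto\sk{e}{y}$); the infimum over $r \in (0, 1/k]$ is therefore jointly upper semicontinuous in $(f,x,e)$; and the maximum of an upper semicontinuous function over the compact set $F \times \B$ is upper semicontinuous in $f$ (passing to convergent subsequences of near-optimal $(x_n,e_n)$). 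Consequently $\set{f : G_k(f) < 1/m}$ is open for all $m, k \in \N$, and $S(F) = \bigcap_{m,k} \set{f : G_k(f) < 1/m}$ is $G_\delta$. Note here that the reformulation $\forall (x,e), \ldots < 1/m$ $\Leftrightarrow$ $G_k(f) < 1/m$ is legitimate precisely because the inner infimum is upper semicontinuous on the compact $F \times \B$ and hence attains its maximum.

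Density of $S(F)$ is the main technical obstacle, and it is here that the results of~\cite{maleva_preiss2018} are indispensable. By M\'ath\'e's theorem recalled in the introduction a closed purely unrectifiable set is uniformly purely unrectifiable, so~\cite{maleva_preiss2018} produces at least one element of $S(F)$. To upgrade existence to density, given $f_0 \in \lipd$ and $\delta > 0$ I would first replace $f_0$ by a $C^\infty$, $(1-\eta)$-Lipschitz mollification $\tilde f_0$ with $\inorm{\tilde f_0 - f_0} < \delta/2$, and then run a Maleva--Preiss-style iterative construction seeded at $\tilde f_0$: at stage $n$ add a perturbation $g_n$ with $\sum_n \lip(g_n) \leq \eta$ and $\sum_n \inorm{g_n} < \delta/2$, so that the limit $f := \tilde f_0 + \sum_n g_n$ is 1-Lipschitz, lies within $\delta$ of $f_0$, and satisfies the approximate slope-$e$ property at arbitrarily small scales for every $(x,e) \in F \times \B$. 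The hardest part is adapting the construction of~\cite{maleva_preiss2018} to an arbitrary seed $f_0$ rather than the zero function while simultaneously controlling the Lipschitz budget, the sup-norm perturbation, and the severe non-differentiability condition uniformly over the compact parameter space $F \times \B$; I would expect to draw on both the portion/curve machinery $\Gamma_\Fhidim(U)$ introduced in Section~\ref{sec:construction} and the full technical apparatus of~\cite{maleva_preiss2018}.
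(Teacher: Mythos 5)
Your treatment of (ii)~$\Rightarrow$~(i) as the contrapositive of Theorem~\ref{thm:typ_ds} (using that two residual sets must intersect) is exactly the paper's logic, and the reduction of (i)~$\Rightarrow$~(ii) to a single closed purely unrectifiable set $F$ via countable intersections is also correct. The $G_\delta$ argument for $S(F)$ is essentially sound (modulo the minor point that $x+y$ may leave $[0,1]^{d}$, fixed by restricting $r$ below $\dist(F,\partial[0,1]^{d})$). The problem is the density of $S(F)$, which is the entire substance of the theorem and which you do not prove. Worse, the scheme you sketch cannot work: if $\tilde f_0$ is smooth and $g=\sum_n g_n$ satisfies $\lip(g)\le\eta$, then for $f=\tilde f_0+g$ and any $x\in F$ one has
\begin{equation*}
\liminf_{r\to 0^+}\sup_{\norm{y}\le r}\frac{\abs{f(x+y)-f(x)-\sk{e}{y}}}{r}\;\ge\;\norm{\nabla\tilde f_0(x)-e}-\eta,
\end{equation*}
which is strictly positive for every $e\in\B$ with $\norm{\nabla\tilde f_0(x)-e}>\eta$. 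So no additive perturbation with total Lipschitz budget $\eta<1$ of a smooth seed can land in $S(F)$: the condition ``$\liminf=0$ for \emph{every} $e\in\B$'' forces each successive approximant to be a full-strength rebuild of the function near $F$, not a small-Lipschitz correction.

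The paper avoids both difficulties by playing the Banach--Mazur game (Theorem~\ref{thm:bm_balls}) with target $\text{NonD}(P)$, so neither a $G_\delta$ structure nor a density argument is needed. In round $n$, Player~II smooths Player~I's centre $f_n$, applies Lemma~\ref{U} (a restatement of \cite[Lemma~2.9]{maleva_preiss2018} for compact purely unrectifiable sets) with a \emph{single} target slope $e_n$ taken from a dense sequence in $B(0,1)$, and obtains $g_n$ that is sup-norm close to $f_n$ yet satisfies $\abs{h(x+y)-h(x)-\sk{e_n}{y}}\le\eta_n\eps_n$ for $\norm{y}\le\eps_n$, all $x\in P$, and all $h$ in a small ball around $g_n$. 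The infinite iteration you would need for density is supplied for free by the game, and the limit function exhibits slope $e_n$ at scale $\eps_n\to 0$ near every point of $P$, whence $\mathcal Dh(x,v)=[-1,1]$. If you want to keep your $G_\delta$-plus-dense architecture, the density step would require reproving a relative version of \cite[Theorem~1.13]{maleva_preiss2018} anchored at an arbitrary $f_0$, with full-strength (not small-Lipschitz) modifications at each stage; as written, your proposal neither does this nor could do it with the perturbation budget you impose.
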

We caution again that Theorems~\ref{thm:typ_ds} and~\ref{thm:typ_nds}  are not formally equivalent statements, i.e.\ the negation of~\eqref{nwh_diff} is not formally the same as~\eqref{diff_pts}. Thus, the following dichotomy is also a new result which follows from Theorems~\ref{thm:typ_ds}~and~\ref{thm:typ_nds}.
\begin{cor} \label{cor:typ_dich}
	Let $d\geq 1$. Each analytic subset $\A\subseteq (0,1)^{d}$ belongs to exactly one of the following two classes: typical differentiability or typical non-differentiability sets.
\end{cor}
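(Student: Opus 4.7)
The plan is to deduce the corollary from Theorems~\ref{thm:typ_ds} and~\ref{thm:typ_nds} combined with the Baire category theorem applied to the complete metric space $\lip_{1}([0,1]^{d})$. First I would observe that condition~\eqref{ncoverable} of Theorem~\ref{thm:typ_ds} and condition~\eqref{coverable} of Theorem~\ref{thm:typ_nds} are literal negations of one another: the former asserts that $\A$ is not contained in any $F_{\sigma}$ purely unrectifiable set, while the latter asserts that $\A$ is contained in such a set. Consequently, for any analytic $\A\subseteq (0,1)^{d}$, exactly one of~\eqref{ncoverable} or~\eqref{coverable} is true.

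From this dichotomy on the geometric side, the equivalences given by the two theorems transfer the dichotomy to the differentiability side: if~\eqref{ncoverable} holds then $\A$ is a typical differentiability set by Theorem~\ref{thm:typ_ds}, and if~\eqref{coverable} holds then $\A$ is a typical non-differentiability set by Theorem~\ref{thm:typ_nds}. Hence every analytic $\A$ belongs to at least one of the two classes.

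It remains to verify that these two classes are disjoint, that is, no analytic $\A$ can simultaneously be a typical differentiability set and a typical non-differentiability set. Suppose, towards a contradiction, that $\A$ has both properties. Then both of the sets
\begin{equation*}
\mc{R}_{1}=\set{f\in\lip_{1}([0,1]^{d}):\Diff(f)\cap \A\ne\emptyset}\quad\text{and}\quad \mc{R}_{2}=\set{f\in\lip_{1}([0,1]^{d}):\Diff(f)\cap \A=\emptyset}
\end{equation*}
would be residual subsets of $\lip_{1}([0,1]^{d})$. However, $\mc{R}_{1}\cap \mc{R}_{2}=\emptyset$ by definition, so their intersection is empty, which contradicts the Baire category theorem applied to the complete metric space $\lip_{1}([0,1]^{d})$ (two residual sets in a Baire space must have non-empty, in fact residual, intersection).

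This argument is essentially bookkeeping, so I do not anticipate any real obstacle; the only minor subtlety, and the reason a separate corollary is needed at all, is exactly the point flagged by the authors, namely that property~\eqref{nwh_diff} is not the formal negation of property~\eqref{diff_pts}. The Baire-category step above is what bridges this gap and rules out the logically possible scenario in which neither the set of $f$ differentiable somewhere in $\A$ nor the set of $f$ nowhere differentiable in $\A$ is residual.
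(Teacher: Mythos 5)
Your proof is correct and follows essentially the route the paper intends: exhaustiveness comes from the fact that conditions~\eqref{ncoverable} and~\eqref{coverable} are mutual negations combined with the two equivalences, and disjointness comes from the Baire category theorem in the complete metric space $\lip_{1}([0,1]^{d})$ (two disjoint residual sets cannot coexist). No issues.
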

\begin{remark}
Note that a typical differentiability set $\A$ may be purely unrectifiable. As an example, we may take $\A$ as 
a $1$-dimensional, Lebesgue null, $G_\delta$ set dense in $[0,1]$, embedded in $[0,1]^d$. Although by~\cite[Theorem~1.13]{maleva_preiss2018},
there is a Lipschitz function non-differentiable in $\A$ in the strongest possible sense, Theorem~\ref{thm:typ_ds} guarantees that  a \emph{typical} Lipschitz function has differentiability points in $\A$.
\end{remark}

Theorems~\ref{thm:typ_ds} and~\ref{thm:typ_nds} in dimension $d=1$ coincide with the results proved by Preiss and Ti\v ser in~\cite{preiss_tiser94}, in this paper we provide a proof of the two statements for all dimensions $d\geq 1$. Also, as a corollary of the proof of Theorem~\ref{thm:typ_ds}, we obtain a strengthening of their typical differentiability result, see Remark~\ref{rem:first_typ_fn_typ_point}. 

Since conditions~\eqref{ncoverable} of Theorem~\ref{thm:typ_ds} and~\eqref{coverable} of Theorem~\ref{thm:typ_nds} are mutually exclusive, it is enough to prove only implications~\eqref{ncoverable}$\Rightarrow$\eqref{diff_pts} in Theorem~\ref{thm:typ_ds}, and~\eqref{coverable}$\Rightarrow$\eqref{nwh_diff} in Theorem~\ref{thm:typ_nds}. For convenience, we restate these as two new statements. Moreover, we include in these two statements additional details concerning special forms of differentiability and non-differentiability which, for simplicity, are omitted from Theorems~\ref{thm:typ_ds} and~\ref{thm:typ_nds}.
\begin{theorem}\label{thm:main_result}
	Let $d\geq 1$. If an analytic set $\A\subseteq (0,1)^{d}$ 
cannot be covered by an $F_{\sigma}$, purely unrectifiable set, then a typical $f\in\lip_{1}([0,1]^{d})$ has points of differentiability in $\A$. Such points $x\in\A$ may additionally be taken so that the gradient $\nabla f(x)$ of $f$ at $x$ has magnitude one.
\end{theorem}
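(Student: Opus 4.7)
The plan is to reduce the statement to a geometric question about a well-structured closed subset $\Fhidim\subseteq(0,1)^d$, then to run a Baire category argument on $\lip_1([0,1]^d)$ which, for a typical $f$, produces a $C^1$-curve along which $f$ attains maximal slope, with a maximising point inside $\A\cap\Fhidim$ at which $f$ is Fr\'echet differentiable with unit gradient.

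The first step is to establish a higher-dimensional analogue of Petruska's theorem: given any analytic $\A\subseteq(0,1)^d$ not coverable by an $F_\sigma$ purely unrectifiable set, I would construct a closed set $\Fhidim\subseteq(0,1)^d$ with every portion of positive cone width (Definition~\ref{def:every_portion}\eqref{ev_prt_pos_cw}) such that $\A\cap\Fhidim$ is relatively residual in $\Fhidim$ and, moreover, meets every curve $\gamma\in\Gamma_\Fhidim((0,1)^d)$ in a set of full Lebesgue measure within $\gamma^{-1}(\Fhidim)$. The construction would proceed by an analytic-selection argument that iteratively refines candidate closed sets over a countable basis of portions, using the defining assumption on $\A$ to secure at each stage the existence of a $C^1$-curve hitting the current portion of $\A$ in positive Lebesgue measure.

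With such $\Fhidim$ fixed, the Baire category core proceeds as follows. For each $n\in\N$, I would define an open set $V_n\subseteq\lip_1([0,1]^d)$ by requiring the existence of a unit-speed $\gamma\in\Gamma_\Fhidim((0,1)^d)$, a parameter $t$ with $\gamma(t)\in\A\cap\Fhidim$, and an $s\ge 1/n$ such that
\[
\abs{\frac{f(\gamma(t+s))-f(\gamma(t-s))}{2s}}>1-\frac{1}{n},
\]
together with an auxiliary transverse-oscillation bound near the chord $\gamma([t-s,t+s])$, ensuring that a slope-$1$ direction at the eventual limit point promotes to a Fr\'echet derivative rather than a mere directional one. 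Openness under $\lnorm{\infty}{\cdot}$-perturbations is routine. For density, given $f\in\lip_1([0,1]^d)$ and $\eps>0$, Remark~\ref{rem:portion} supplies a unit-speed $\nu_U\in\Gamma_\Fhidim(U)$ through a neighbourhood $U$ of some point of $\A\cap\Fhidim$, and one perturbs $f$ by a $1$-Lipschitz bump of sup-norm under $\eps$, concentrated near a density point of $\nu_U^{-1}(\A\cap\Fhidim)$ (abundant by step one), which boosts the slope along $\nu_U$ at that point. For any $f$ in the residual intersection $\bigcap_n V_n$, a compactness extraction on the approximate $(\gamma,t,s)$ data yields a limit point $x\in\A\cap\Fhidim$ and a unit tangent $e$ at which $f$ has symmetric slope $1$ in direction $e$ with controlled transverse increments; combined with $\lip(f)\le 1$, this forces Fr\'echet differentiability of $f$ at $x$ with $\nabla f(x)=e$, whence $\|\nabla f(x)\|=1$.

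The principal obstacle is the Petruska-style reduction of step one: the portion notion is cone width rather than Lebesgue measure, so the argument has to interleave descriptive-set-theoretic machinery with the geometric fact that closed non-purely-unrectifiable sets admit tangent $C^1$-curves, and the iterative refinement must simultaneously preserve the full-measure intersection property. A secondary difficulty is designing the transverse-oscillation condition inside $V_n$: it must be weak enough to keep $V_n$ dense under the curve-based perturbation, yet strong enough that the limit of any maximising sequence exhibits Fr\'echet-level control in all directions, not just along the limiting tangent $e$.
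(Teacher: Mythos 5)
Your overall architecture (reduce to a closed set $\Fhidim$ with every portion of positive cone width, then run a category argument producing a point of $\A\cap\Fhidim$ where a curve-directional derivative of magnitude one exists and is promoted to full differentiability by $\lip(f)\le1$) matches the paper's, but two of your key steps contain genuine gaps.

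First, the strengthening you ask of step one is impossible. You require $\A\cap\Fhidim$ to meet \emph{every} $\gamma\in\Gamma_\Fhidim((0,1)^d)$ in a set of full Lebesgue measure inside $\gamma^{-1}(\Fhidim)$, and your density argument for $V_n$ then perturbs at a ``density point of $\nu_U^{-1}(\A\cap\Fhidim)$''. But take $\A$ to be a dense $G_\delta$, $\mathcal H^1$-null subset of a line segment (the example in Remark~\ref{rem:portion}'s vicinity, i.e.\ the paper's remark after Corollary~\ref{cor:typ_dich}): this $\A$ satisfies the hypothesis of the theorem, yet $\gamma^{-1}(\A)$ is Lebesgue null for every Lipschitz curve $\gamma$, so whenever $\leb(\gamma^{-1}(\Fhidim))>0$ your full-measure requirement fails and $\nu_U^{-1}(\A\cap\Fhidim)$ has no density points of positive density. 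The only largeness of $\A$ inside $\Fhidim$ that is actually available (via Solecki's theorem, Proposition~\ref{th:sol}) is \emph{relative residuality}, a category notion; any argument that treats $\A\cap\Fhidim$ as measure-large along curves is doomed. This is precisely why the paper inserts the long construction of Theorem~\ref{lemma:key}: it builds one Lipschitz curve $\gamma$ and a closed $\Fonedim\subseteq[0,1]$ with every portion of positive \emph{measure} such that $\gamma^{-1}(\A)$ contains a set $E$ that is merely \emph{residual} in $\Fonedim$, and all subsequent work locates the differentiability point inside $E$.

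Second, the Baire scheme $\bigcap_n V_n$ with unlinked data $(\gamma_n,t_n,s_n)$ does not close. For $f$ in the intersection you get, for each $n$, some point $x_n$ and a single scale $s_n\ge 1/n$ with a large difference quotient; a compactness extraction gives a limit $x$ lying only in the closed set $\Fhidim$ (not in the residual set $\A\cap\Fhidim$), and control at one scale per $n$ at drifting points does not yield an exact directional derivative at $x$ --- no transverse-oscillation side condition can repair the absence of coherence across scales. The paper resolves exactly this by replacing the open-dense decomposition with a Banach--Mazur game (Theorem~\ref{thm:bm}): Player~II adaptively builds nested intervals $I_n$ shrinking to a single $t^*\in E$ together with conical functions $\psi_n$ whose increments along $\gamma$ are controlled uniformly via the summable errors $2\myeta_k^{1/3}$ (property~\eqref{it:lem:slope:ineq} there), so that the limit function has directional derivative exactly $\pm1$ at $\gamma(t^*)$; full differentiability then follows as you intended. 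If you want to keep a non-game formulation, you would need to build the nesting of points and scales into the sets $V_n$ themselves, which essentially reconstructs the game.
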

In Theorem~\ref{thm:main:nondiff} we show that the non-differentiability of Theorem~\ref{thm:typ_nds} may be taken in a stronger sense. Namely, we prove that for each typical non-differentiability set $\A$ a typical function $f\in\lip_{1}([0,1]^{d})$ has no directional derivatives at every $x\in\A$ and, moreover, its derived set $\mathcal D f(x,v)$, defined below, coincides with $[-1,1]$, for each $\norm{v}=1$.
\begin{defn}\label{def:der_set}
Suppose that $f:[0,1]^d\to \R$ is a function and $x\in(0,1)^d$, $v\in\Sphere^{d-1}$ are two vectors.
The \emph{derived set} of $f$ at the point $x$  in the direction of $v$ is defined as the
set  $\mathcal D f(x,v)$ of all
existing limits $\lim_{n\to\infty} (f(x+t_nv)-f(x))/t_n$, where $t_n\searrow0$.
\end{defn}

\begin{restatable}{theorem}{mainnondiff}\label{thm:main:nondiff}
Let $d\ge1$.
If a set $\A\subseteq (0,1)^{d}$ can be covered by an $F_{\sigma}$, purely unrectifiable set, then a typical $f\in\lip_{1}([0,1]^{d})$ has no directional derivatives at every point of $\A$ and, moreover, for a typical $f\in\lip_{1}([0,1]^{d})$ it holds that $\mathcal Df(x,v)=[-1,1]$ for every $x\in\A$ and every $v\in\Sphere^{d-1}$.
\end{restatable}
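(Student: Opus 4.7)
Observe first that $\mathcal Df(x,v) = [-1,1]$ implies that $f$ has no directional derivative at $x$ in the direction $v$, since otherwise the derived set would collapse to the singleton $\{d_vf(x)\}$. Hence it suffices to prove the second (and stronger) conclusion of the theorem. The plan is to establish the even stronger statement that a typical $f \in \lip_1([0,1]^d)$ satisfies the \emph{strong non-differentiability condition} of \cite[Theorem~1.13]{maleva_preiss2018} at every point of $\A$; that is, for every $x \in \A$ and every $e \in \overline B(0,1)$,
\[ \liminf_{r \to 0}\sup_{\|y\|\le r}\frac{|f(x+y)-f(x)-\langle e, y\rangle|}{r} = 0. \]
Substituting $e = \alpha v$ with $\alpha \in [-1,1]$ and evaluating at $y = rv$ gives $\alpha \in \mathcal Df(x,v)$, whence $\mathcal Df(x,v) \supseteq [-1,1]$; the reverse inclusion is automatic for $1$-Lipschitz $f$. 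Writing the hypothesis as $\A \subseteq \bigcup_{n\ge 1}P_n$ with each $P_n$ closed and purely unrectifiable, and using that a countable intersection of residual sets is residual, the task reduces to proving, for each closed purely unrectifiable $P \subseteq [0,1]^d$, that the set $\mathcal G(P)$ of $f \in \lip_1([0,1]^d)$ satisfying the strong non-differentiability condition at every $x \in P \cap (0,1)^d$ is residual.

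Fixing a countable dense set $E \subseteq \overline B(0,1)$ and using the $1$-Lipschitz continuity of $f$, one obtains the decomposition $\mathcal G(P) = \bigcap_{e \in E,\ k, N \in \N}\mathcal G_{e,k,N}(P)$, where
\[ \mathcal G_{e,k,N}(P) = \bigl\{f \in \lip_1([0,1]^d) : \forall x \in P,\ \exists\, r \in (0, 1/N),\ \sup_{\|y\|\le r}|f(x+y)-f(x)-\langle e, y\rangle| < r/k\bigr\}. \]
Each $\mathcal G_{e,k,N}(P)$ is open in $\lip_1([0,1]^d)$, by a standard compactness argument: given $f \in \mathcal G_{e,k,N}(P)$, assign to each $x \in P$ a witnessing scale $r_x$ with positive slack in the defining strict inequality; continuity of $f$ propagates this slack to a neighbourhood of $x$ in $P$; compactness of $P$ yields a finite subcover and a uniform slack, which renders the condition stable under sufficiently small sup-norm perturbations of $f$.

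The crux of the proof, and the main obstacle, is the density of each $\mathcal G_{e,k,N}(P)$ in $\lip_1([0,1]^d)$: given $f_0 \in \lip_1$ and $\delta > 0$, one must construct a $1$-Lipschitz $f$ with $\|f-f_0\|_\infty < \delta$ such that at every $x \in P$ some scale $r < 1/N$ exhibits $f$ as approximately linear with slope $e$, to relative precision $1/k$. This is delicate because, even with $\delta$ tiny, the perturbation must force the local behaviour of $f$ towards slopes $e$ that may be far from the natural slopes of $f_0$, whilst respecting the Lipschitz-$1$ constraint. The construction exploits the pure unrectifiability of $P$ in an essential way: since every Lipschitz curve meets $P$ only in a Lebesgue-null set, $P$ admits open neighbourhoods whose sectional measures along suitable families of Lipschitz curves are as small as desired. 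Combining this with M\'athe's announced equivalence (for Borel sets) of pure and uniform pure unrectifiability, the cone-unrectifiability technology of \cite{acp2010differentiability}, and the strong non-differentiability function constructions of \cite[Theorem~1.13]{maleva_preiss2018}, one inserts into $f_0$ a multiscale oscillating perturbation, localised in directions transverse to $P$ and calibrated so as to produce the target linear behaviour $\langle e,\cdot\rangle$ at every point of $P$ at some scale less than $1/N$, while keeping $\|f - f_0\|_\infty < \delta$ and preserving that $f$ is $1$-Lipschitz.

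Granted density, each $\mathcal G_{e,k,N}(P_n)$ is open and dense in the complete metric space $\lip_1([0,1]^d)$; hence $\mathcal G(P_n)$ is a dense $G_\delta$, the countable intersection $\bigcap_n \mathcal G(P_n)$ is residual, and any $f$ in this intersection satisfies the strong non-differentiability condition at every point of $\A$. This yields $\mathcal Df(x,v) = [-1,1]$ for every $x \in \A$ and $v \in \Sphere^{d-1}$ and, as observed at the outset, implies that $f$ has no directional derivatives at any point of $\A$, establishing the theorem.
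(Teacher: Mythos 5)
Your overall architecture is sound and is essentially the open--dense reformulation of the argument the paper runs as a Banach--Mazur game: the reduction to a single compact purely unrectifiable $P$, the decomposition of the strong non-differentiability condition into countably many sets $\mathcal G_{e,k,N}(P)$ over a dense set of slopes $e$, the openness of each such set via compactness of $P$, and the derivation of $\mathcal Df(x,v)=[-1,1]$ (hence non-existence of directional derivatives) from the condition $\liminf_{r\to0}\sup_{\norm{y}\le r}|f(x+y)-f(x)-\sk{e}{y}|/r=0$ for all $\norm{e}\le1$ are all correct. (Minor point: to make the compactness argument literal you should write the $F_\sigma$ cover as a countable union of \emph{compact} subsets of $(0,1)^{d}$, so that $x+y$ stays in $[0,1]^{d}$ for the relevant $x\in P$ and small $r$.)

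The genuine gap is the density of $\mathcal G_{e,k,N}(P)$, which you correctly identify as the crux and then do not prove: the paragraph beginning ``The construction exploits the pure unrectifiability of $P$\dots'' is a description of what a construction would have to achieve, not an argument. Moreover, the tool you invoke, \cite[Theorem~1.13]{maleva_preiss2018}, is the wrong one: it produces a \emph{single} Lipschitz function that is strongly non-differentiable on $P$, and there is no soft way to pass from that to density. For instance, $f=(1-\lambda)f_{0}+\lambda\psi$ with $\lambda<\delta/2$ fails, because at the scales where $\psi$ exhibits slope $e$ the increment of $f$ is dominated by the uncontrolled increment of $f_{0}$, so one cannot conclude $\sup_{\norm{y}\le r}|f(x+y)-f(x)-\sk{e}{y}|<r/k$. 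What is actually needed is the quantitative perturbation result \cite[Lemma~2.9]{maleva_preiss2018} (restated in the paper as Lemma~\ref{U}): after first replacing $f_{0}$ by a $C^{1}$ approximant with $\Lip<1$ (a step your sketch also omits, and which is necessary because one must override the well-defined local linear behaviour of the approximant near $P$), one obtains, for each $\norm{e}<1$ and $\eta,\omega_0>0$, a function $g$ with $\inorm{g-f_{0}}$ small, $\Lip(g)<1$, and constants $\xi_{0},r>0$ such that \emph{every} $h$ with $\inorm{h-g}\le2\xi_{0}$ satisfies $\sup_{\norm{y}\le r}|h(x+y)-h(x)-\sk{e}{y}|\le\eta r$ for all $x\in P$. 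This gives not just a point of $\mathcal G_{e,k,N}(P)$ near $f_{0}$ but a whole ball, which is exactly the density (and openness-in-the-relevant-sense) you need. Without this lemma, or a proof of an equivalent statement, your argument has no content at its central step; with it, the rest of your outline goes through and coincides in substance with the paper's proof.
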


To conclude, note that~\cite{merlo} provides a statement analogous to Theorem~\ref{thm:main_result} in spaces of vector-valued Lipschitz mappings $\R^d\to\R^m$,
with the restriction $m\ge d$, and with only directional differentiability instead of full differentiability. Although this statement might appear similar in spirit, we show in Section~\ref{section:proj} that projection arguments do not allow one to lower the codomain dimension to $1$, as we achieve in Theorem~\ref{thm:main_result}. On the other hand,
parts of the argument employed in \cite{merlo} apply to Lipschitz mappings without restriction on the dimension of the codomain and therefore Theorem~\ref{thm:main:nondiff} is proved implicitly there. 
However, in Section~\ref{sec.proof} of the present article we provide an independent shorter proof of Theorem~\ref{thm:main:nondiff}, using results of~\cite{maleva_preiss2018}. 

\subsection{Strategy of the proof of typical differentiability}
The proof of the `typical differentiability' Theorem~\ref{thm:main_result} roughly divides into two halves, proved in Sections~\ref{sec:typ_diff_curves} and~\ref{sec:construction}. In the first part, we prove the statement for the special case where $\A$ (or $\gamma(E)$ in the statement below) is a subset of a Lipschitz curve with unique tangents at all points in~$\A$.
\begin{restatable}{theorem}{bm}\label{thm:bm}
	Let $\emptyset\ne \Fonedim\subseteq[0,1]$ be a closed set with every portion of positive measure and let $E$ be a relatively residual subset of $\Fonedim$. Let $\gamma\colon [0,1]\to(0,1)^d$ be a Lipschitz curve with Lipschitz constant $1$, such that $\gamma$ is differentiable with derivative of magnitude one at each $t\in E$.
	Then the set $S$ of those functions $f\in\Lip_1([0,1]^{d})$ for which there exists $t\in E$ such that $f$ is differentiable with derivative of magnitude one at $\gamma(t)$ is residual in $\Lip_1([0,1]^{d})$.
\end{restatable}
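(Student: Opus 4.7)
The plan is to prove $S$ is residual via the Baire category theorem in the complete metric space $\Lip_1([0,1]^d)$. Fix a countable dense sequence $\{t_j\}_{j\ge1}\subseteq E$, which is nonempty since $E$ is residual in the nonempty Polish space $\Fonedim$. For each $n,j\ge1$, define
\[ W_{n,j} := \Bigl\{ f\in\Lip_1([0,1]^d) :\ \exists\, r\in(0,1/n],\ \sup_{r/2\le|v|\le r}\bigl|f(\gamma(t_j)+v)-f(\gamma(t_j))-\sk{\gamma'(t_j)}{v}\bigr| < r/n \Bigr\}. \]
A strict-inequality cushioning argument shows $W_{n,j}$ is open: if $f\in W_{n,j}$ with witness $r$ and margin $\mu:=r/n-\sup > 0$, any $g$ with $\|g-f\|_\infty<\mu/4$ preserves the condition with the same $r$ (since the annular supremum shifts by at most $2\|g-f\|_\infty$).

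Density of each $W_{n,j}$ follows from an explicit min-max perturbation. Given $f_0\in\Lip_1([0,1]^d)$ and $\epsilon>0$, set $x_0:=\gamma(t_j)$, $e_0:=\gamma'(t_j)$ (so $|e_0|=1$), and $L(x):=f_0(x_0)+\sk{e_0}{x-x_0}$, a $1$-Lipschitz affine function. With $\eta:=\min\{\epsilon/2,\,1/(2n)\}$ the function
\[ g := \min\bigl(f_0+\eta,\ \max(f_0-\eta,\ L)\bigr) \]
is $1$-Lipschitz (as a max/min of such), $\|g-f_0\|_\infty\le\eta<\epsilon$, and coincides with $L$ on $B(x_0,\eta/2)$ since $|L(x)-f_0(x)|\le 2|x-x_0|\le\eta$ there. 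Choosing $r:=\eta/2\le1/n$ places the annulus $\{r/2\le|v|\le r\}$ entirely inside $B(x_0,\eta/2)$, so the error supremum vanishes and $g\in W_{n,j}$; in particular $g$ is \emph{fully} differentiable at $\gamma(t_j)$ with $\nabla g(\gamma(t_j))=\gamma'(t_j)$. Consequently $U_n:=\bigcup_j W_{n,j}$ is open dense and $G:=\bigcap_n U_n$ is a dense $G_\delta$ subset of $\Lip_1([0,1]^d)$ by Baire.

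The main obstacle is to show $G\subseteq S$. A function $f\in G$ produces, for each $n$, witnesses $(j(n),r_n)$ with $r_n\le1/n$ and an annular approximation bound near $\gamma(t_{j(n)})$; however, $j(n)$ may vary with $n$, and a limit $t_{j(n_k)}\to t_*$ in $\Fonedim$ need not lie in $E$ (since $E$ is only relatively residual in the closed set $\Fonedim$, not closed), while the annuli $[r_n/2,r_n]$ may leave scale-gaps preventing aggregation into a uniform $o(|v|)$ bound. To resolve both difficulties, the construction should be refined by (i) writing $E=\Fonedim\cap\bigcap_k O_k$ with each $O_k$ open dense in $\Fonedim$, and restricting the witness index $j$ in $W_{n,j}$ to lie in a shrinking open neighborhood of $\Fonedim$ intersected with $O_k$, and (ii) enforcing a dyadic nesting $r_{n+1}\ge r_n/2$ so the annuli cover a full neighborhood of zero. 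Interleaving an additional countable family of open dense sets in the Baire construction then forces the witnesses $t_{j(n)}$ to converge to a single point $t_*\in E$, and the annular $1/n$-bounds collapse into $|f(\gamma(t_*)+v)-f(\gamma(t_*))-\sk{\gamma'(t_*)}{v}|=o(|v|)$, yielding differentiability of $f$ at $\gamma(t_*)$ with gradient $\gamma'(t_*)$ of magnitude one. This interleaving is the delicate core of the argument, as it must simultaneously preserve openness, density, and convergence of the witnesses to a point of $E$ rather than of $\Fonedim\setminus E$.
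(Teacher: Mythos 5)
There is a genuine gap, and it is not a technicality: the proposal stops exactly where the content of the theorem begins. You construct open dense sets $W_{n,j}$ and a residual $G=\bigcap_n\bigcup_j W_{n,j}$, but you concede that you cannot show $G\subseteq S$, and the sketched remedies cannot work in the static ``countable intersection of open dense sets'' framework. The witnesses $(j(n),r_n)$ for different $n$ are chosen independently by the existential quantifier inside each $W_{n,j}$, so there is no mechanism to enforce $r_{n+1}\ge r_n/2$, nor to force the points $t_{j(n)}$ to converge to a single $t_*$ fast enough relative to the scales $r_n$ (which is needed even to transfer the annular estimate from the base point $\gamma(t_{j(n)})$ to $\gamma(t_*)$). ``Interleaving an additional countable family of open dense sets'' does not create the required coherence between levels, because each level's witness is still free. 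This adaptivity across scales is precisely why the paper runs a Banach--Mazur game: Player~II chooses each new conical approximation and each new interval \emph{in response to} the previous round, and a quantitative approximation property (Lemma~\ref{lem:slope}) controls how the slope of $f\circ\gamma$ can change from one round to the next, so that the slopes converge and the limit function is differentiable at the single limit point $t^*\in E$.

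A decisive way to see that your construction, as written, cannot be completed: nothing in the openness or density arguments for $W_{n,j}$ uses the hypothesis that $\Fonedim$ has every portion of positive measure. Take $d=1$, $\gamma$ the identity (rescaled into $(0,1)$), $\Fonedim$ a closed Lebesgue-null set without isolated points and $E=\Fonedim$; your sets $W_{n,j}$ are still open and dense, so $G$ is still residual --- yet by \cite{preiss_tiser94} (equivalently Theorem~\ref{thm:typ_nds} here) a typical $f\in\Lip_1([0,1])$ is nowhere differentiable on $\Fonedim$, so $S$ is not residual and $G\not\subseteq S$. Hence any correct completion must inject the positive-measure hypothesis into the step you left open. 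In the paper this happens through the $\eps$-density intervals for $\Fonedim$ and the measure estimates on the exceptional sets $\Fset_0\cup\Fset_1\cup\Fset_2$ in Lemma~\ref{lem:slope}, which guarantee that the next interval can be chosen to still meet $\Fonedim$; your sketch contains no analogue of this, so the ``delicate core'' you defer is in fact the entire proof.
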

\begin{remark}\label{rem:first_typ_fn_typ_point}
In the setting of Theorem~\ref{thm:bm}, it is possible to obtain the stronger conclusion that there is a residual subset $R$ of $\lip_{1}([0,1]^{d})$ for which every function $f\in R$ is differentiable at $\gamma(t)$ for residually many $t\in F$ (or, equivalently, for residually many $t\in E$). Loosely rephrased, a typical $f\in\lip_{1}([0,1]^{d})$ is differentiable at a typical point of $\gamma(F)$ (or a typical point of $\gamma(E)$). For further details see Remark~\ref{rem:additional-diff}.

Importantly, this is a new observation even in dimension $d=1$, where it asserts a stronger property of one-dimensional typical differentiability sets than that proved in~\cite{preiss_tiser94}; in particular it strengthens~\cite[Lemma~2]{preiss_tiser94}. Indeed, we may state the following extension of the results of \cite{preiss_tiser94}:

\textit{If an analytic set $\A\subseteq[0,1]$ cannot be covered by a one-dimensional Lebesgue null $F_\sigma$ set, then there exists a non-empty closed set $\Fonedim\subseteq[0,1]$  with every portion of positive measure 
	 and a residual set of functions $f\in\lip_1([0,1]^d)$
	 for which $\A\cap\Diff(f)$ is a residual subset of $\Fonedim$. The same conclusion holds for
	 any non-empty closed set $\Fonedim\subseteq [0,1]$ with every portion of positive measure such that $\A\cap\Fonedim$ is residual in $\Fonedim$.}
\end{remark}

Theorem~\ref{thm:bm} is proved in Section~\ref{sec:typ_diff_curves}. Then, in Section~\ref{sec:construction}, we show that the general statement of Theorem~\ref{thm:main_result} reduces to the special case of Theorem~\ref{thm:bm}. Put differently, we show that any set $\A\subseteq (0,1)^{d}$ satisfying the hypotheses of Theorem~\ref{thm:main_result} intersects some Lipschitz curve $\gamma$, with $\Lip(\gamma)\le1$, in the particular manner required by Theorem~\ref{thm:bm}. To achieve this, we make important use of the following proposition, which follows from~\cite{solecki1994}, cf.~\cite[Theorem~2.8]{merlo}.
It shows that analytic sets which cannot be covered by a countable union of closed purely unrectifiable sets, may be approximated by closed sets having every portion of positive cone width, see Definition~\ref{def:every_portion}~\eqref{ev_prt_pos_cw}.
\begin{prp}\label{th:sol}
If an analytic set $\A\subseteq(0,1)^d$ cannot be covered by a countable union of closed purely unrectifiable sets, then there exists a closed set $\Fhidim\subseteq[0,1]^d$, such that $\A\cap\Fhidim$ is residual in $\Fhidim$, and $\Fhidim$ has every portion of positive cone width.
\end{prp}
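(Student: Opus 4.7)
The plan is to apply the dichotomy theorem of Solecki~\cite{solecki1994} for analytic sets relative to $\sigma$-ideals generated by closed sets, applied to the $\sigma$-ideal $\mathcal I$ of subsets of $[0,1]^{d}$ coverable by countably many closed purely unrectifiable sets, and then to upgrade the resulting closed set into one with every portion of positive cone width. First I would verify Solecki's hypothesis that the family of closed purely unrectifiable subsets of $[0,1]^{d}$ is $G_\delta$ in the hyperspace of closed sets (with the Fell topology). Using a Lusin-type approximation for Lipschitz functions (every Lipschitz curve coincides almost everywhere with some $C^{1}$-curve), pure unrectifiability of a compact $K$ is equivalent to $\leb(\gamma_n^{-1}(K))=0$ for every $\gamma_n$ in some fixed countable dense family of $C^{1}$-curves with nowhere zero derivative, and each such condition is $G_\delta$ in $K$. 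Solecki's theorem then yields, for the analytic set $\A \notin \mathcal I$, a nonempty closed set $K \subseteq [0,1]^{d}$ with $K \notin \mathcal I$ and $\A \cap K$ comeager in $K$.

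The key subsequent step is to strengthen the dichotomy so that no relatively open portion of $K$ lies in $\mathcal I$. With this stronger form, corresponding to~\cite[Theorem~2.8]{merlo}, I would take $\Fhidim := K$ and verify the every-portion-positive-cone-width property directly. Given an open $U \subseteq [0,1]^d$ with $U \cap \Fhidim \neq \emptyset$, by hypothesis $U \cap \Fhidim \notin \mathcal I$; in particular $U \cap \Fhidim$ is not itself purely unrectifiable, since otherwise its exhaustion by the closed sets $\{x \in \Fhidim : \dist(x,[0,1]^{d}\setminus U) \geq 1/n\}$---each closed and, by hereditariness, purely unrectifiable---would place $U \cap \Fhidim$ into $\mathcal I$. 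Hence there is a Lipschitz curve $\gamma$ with $\leb(\gamma^{-1}(U \cap \Fhidim)) > 0$. Selecting a Lebesgue density point $t_0$ of $\gamma^{-1}(U \cap \Fhidim)$, at which $\gamma$ has nonzero derivative by the definition of Lipschitz curve, then restricting to a small subinterval around $t_0$ and applying the Lusin approximation to replace $\gamma$ by a $C^1$-curve agreeing with it on a positive-measure subset, one obtains after reparametrization the desired $C^1$-smooth curve $\nu_U \colon [0,1] \to U$ with nowhere zero derivative and $\leb(\nu_U^{-1}(U \cap \Fhidim)) > 0$.

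The principal obstacle is establishing the strengthened form of Solecki's dichotomy, namely that the closed set $K$ furnished by the theorem has no relatively open portion in $\mathcal I$. The naive a posteriori approach---setting $V := \bigcup\{U \text{ open} : U \cap K \in \mathcal I\}$ and replacing $K$ by $K \setminus V$---does produce a closed set not in $\mathcal I$ with no portion in $\mathcal I$ (using Lindel\"of and hereditariness to conclude $V \cap K \in \mathcal I$), but propagating the comeagerness of $\A \cap K$ in $K$ down to $K \setminus V$ is delicate, since $V \cap K$ need not be meager in $K$. The resolution, following~\cite[Theorem~2.8]{merlo}, is to build the no-portion-in-$\mathcal I$ condition into the Lusin-scheme construction underlying Solecki's proof from the outset, rather than attempting a clean-up afterwards.
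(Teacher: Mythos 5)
Your strategy coincides with the paper's: invoke Solecki's covering dichotomy for the family $I$ of closed purely unrectifiable subsets of $[0,1]^{d}$ to produce a closed set $\Fhidim$ in which $\A$ is relatively residual and no portion of which is negligible, then upgrade ``not purely unrectifiable'' to ``positive cone width'' by a Lusin-type $C^{1}$ approximation of a Lipschitz curve near a density point of its preimage of the portion. Two remarks. First, what you single out as the ``principal obstacle'' is not one: the form of Solecki's result the paper cites, \cite[Remark~(2), p.~1024]{solecki1994}, already produces a closed set $C$ with $\A\cap C$ containing a dense $G_{\delta}$ \emph{and} with $\overline{V\cap C}\notin I$ for every open $V$ meeting $C$. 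This is even a little stronger than the portion-wise condition you aim for (each closed portion fails to be a single member of $I$, not merely to lie in the generated $\sigma$-ideal), and it feeds directly into your final $C^{1}$-approximation step, making your exhaustion argument by the sets $\{x\in\Fhidim\colon \dist(x,[0,1]^{d}\setminus U)\geq 1/n\}$ unnecessary. Your alternative of rebuilding the Lusin scheme as in \cite[Theorem~2.8]{merlo} would also work, but as written it leaves the key step as a citation plus a plan rather than a proof.

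Second, your preliminary ``verification'' is both superfluous and incorrect. Solecki's theorem, in the form used here, holds for an \emph{arbitrary} family of closed subsets of a Polish space, so no definability of $I$ in the hyperspace needs to be checked. More importantly, the claimed equivalence of pure unrectifiability of a compact $K$ with $\leb(\gamma_{n}^{-1}(K))=0$ for all $\gamma_{n}$ in a fixed countable dense family of $C^{1}$ curves is false: the condition $\leb(\gamma^{-1}(K))=0$ is not stable under small perturbations of $\gamma$. Indeed, for a single $C^{1}$ curve $\gamma_{n}$ with nowhere zero derivative, only countably many lines $L$ can satisfy $\leb(\gamma_{n}^{-1}(L))>0$ (preimages of distinct lines under a locally injective curve are almost disjoint), so a line segment $K$ in generic position satisfies $\leb(\gamma_{n}^{-1}(K))=0$ for every member of any prescribed countable family while being as far from purely unrectifiable as possible. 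Since this step plays no role once the correct form of Solecki's theorem is quoted, your proposal is salvageable and ultimately matches the paper's argument, but the flawed lemma should be deleted rather than proved.
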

\begin{proof}
We apply~\cite[Remark~(2), p.~1024]{solecki1994} to the collection $I$ of all closed, purely unrectifiable sets
and set $\A$. We see that if $\A$ cannot be covered by a countable union of closed, purely unrectifiable sets, i.e.\ $\A\not\in I_{\text{ext}}$, then there is a non-empty closed set $\Fhidim=C$ such that $\A\cap\Fhidim$ contains a $G_\delta$ set, dense in $\Fhidim$ (implying that $\A\cap\Fhidim$ is residual in $\Fhidim$), and such that for any open set $V$ with $V\cap\Fhidim\ne\emptyset$ it holds that $\overline{V\cap\Fhidim}\not\in I$. In other words, $\overline{V\cap\Fhidim}$ is not a purely unrectifiable set, which implies that there exists a $C^1$-smooth curve $\gamma$, such that $\lebm{\gamma^{-1}(\overline{V\cap\Fhidim})}>0$, implying $\lebm{\gamma^{-1}(\overline V\cap\Fhidim)}>0$. Let $U$ be an open set with $U\cap\Fhidim\ne\emptyset$, let $x\in U\cap\Fhidim$. Choose $r>0$ such that $V=B(x,r)\subseteq \overline V\subseteq U$. If we take $\nu=\gamma$ as above, the condition of Definition~\ref{def:every_portion}~\eqref{ev_prt_pos_cw} is satisfied for $U$, and the statement follows. 
\end{proof}
With Proposition~\ref{th:sol} at hand, the reduction to the `special case' described above is completed by the next theorem.
\begin{restatable}{theorem}{key}\label{lemma:key}
	Let $d\ge1$ and $\Fhidim\subseteq [0,1]^{d}$ be a non-empty, closed set having every portion of positive cone width. Let $\A\subseteq(0,1)^{d}$ be an analytic set such that $\A\cap \Fhidim$ is relatively residual in $\Fhidim$. Then there exists a $1$-Lipschitz curve $\gamma\colon [0,1]\to(0,1)^{d}$ and sets $E\subseteq\Fonedim\subseteq [0,1]$ with the following properties: 
	\begin{enumerate}[(i)]
		\item\label{key1} $\Fonedim$ is non-empty, closed and has every portion of positive measure;
		\item\label{key2} $E$ is residual in $\Fonedim$;
		\item\label{key3} $\gamma$ is differentiable at every point $t\in E$ with $\norm{\gamma'(t)}=1$;
		\item\label{key3a} For every $t\in E$ we have
		\begin{equation*}
		\lim_{\delta\to 0}\osci{\gamma'}{[t-\delta,t+\delta]}=0;
		\end{equation*}
		\item\label{key4} $\gamma(E)\subseteq \A$.
	\end{enumerate}
\end{restatable}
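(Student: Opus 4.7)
My plan is to construct $\gamma$, $\Fonedim$ and $E$ simultaneously via a Cantor-type inductive procedure whose building blocks are the unit-speed $C^1$-smooth curves supplied by Remark~\ref{rem:portion}. The set $\Fonedim$ will emerge as the intersection over all stages of the unions of the chosen intervals, while $E$ will be carved out of $\Fonedim$ as the intersection of countably many residual subsets imposing respectively differentiability of $\gamma$ with unit tangent at $t$, continuity of $\gamma'$ at $t$, and membership of $\gamma(t)$ in $\A$.

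In detail, at stage $n$ I will maintain a finite family $\mathcal{I}_n$ of pairwise disjoint closed sub-intervals of $[0,1]$ forming a Cantor-type scheme (each $I\in\mathcal{I}_n$ is contained in some $I'\in\mathcal{I}_{n-1}$), together with an open ball $V_I\subseteq (0,1)^d$ assigned to each $I\in\mathcal{I}_n$, satisfying $V_I\cap\Fhidim\ne\emptyset$, $V_I\subseteq V_{I'}$ whenever $I\subseteq I'$, and $\diam(V_I)\to 0$ uniformly in $I$ as $n\to\infty$. On each $I\in\mathcal{I}_n$ I will define $\gamma|_I$ as an affine reparametrization of the curve $\nu_{V_I}$ obtained from Remark~\ref{rem:portion} with $a=1$; on the complementary intervals between consecutive children of a common parent $I'\in\mathcal{I}_{n-1}$, $\gamma$ is prescribed by a short $C^1$-smooth 1-Lipschitz interpolation matching the endpoint values and tangents. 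To secure condition~\eqref{key1}, I arrange that the children of any parent $I'\in\mathcal{I}_{n-1}$ jointly fill at least a fixed positive proportion of $|I'|$ and are distributed densely inside $I'$; standard Cantor-scheme arguments then ensure that $\Fonedim:=\bigcap_n\bigcup\mathcal{I}_n$ is non-empty, closed, and has every portion of positive measure.

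For any $t\in\Fonedim$, write $(I_n(t))_{n\ge 0}$ for the nested sequence with $t\in I_n(t)\in\mathcal{I}_n$; the shrinking-diameter condition forces $\gamma(t)=\bigcap_n \overline{V_{I_n(t)}}$ to be a single point of $\Fhidim$. For~\eqref{key4} I use that $\A\cap\Fhidim$ is residual in $\Fhidim$: writing $\A\cap\Fhidim\supseteq \bigcap_m G_m$ with each $G_m$ open and dense in $\Fhidim$, at stage $n$ I demand $V_I\cap\Fhidim\subseteq G_n$ for every $I\in\mathcal{I}_n$, which is possible because $G_n$ is dense in $\Fhidim$ and the balls $V_I$ may be chosen freely small. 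Conditions~\eqref{key3} and~\eqref{key3a} are handled by a Baire category argument: since $\gamma|_{I_n(t)}$ is unit-speed and $C^1$, differentiability with unit tangent at $t$ and continuity of $\gamma'$ at $t$ follow provided the tangent vectors of $\gamma|_{I_n(t)}$ at $t$ converge and the surrounding gap pieces do not destroy this limit. A category argument shows that the set of $t\in\Fonedim$ where this convergence fails, or where $\osci{\gamma'}{[t-\delta,t+\delta]}$ does not vanish, is meagre in $\Fonedim$; intersecting all three residual sets produces $E$.

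The main obstacle will be making the tangent-matching across stages robust enough to support the category argument above. The curves $\nu_{V_I}$ are supplied with fixed, uncontrolled tangent directions, yet to patch them into a single 1-Lipschitz $\gamma$ with a continuous derivative at residually many $t$, I need the children at consecutive stages to have tangents that differ from the parent's by a small, summable amount. Concretely, within each parent piece I will place the children near Lebesgue density points of $\nu_{V_{I'}}^{-1}(V_{I'}\cap\Fhidim)$; by choosing a small sub-ball $V_I$ around the corresponding point of $\Fhidim$ and re-applying Remark~\ref{rem:portion}, one can arrange that the tangent of $\nu_{V_I}$ at its entry into the child interval differs from the parent's tangent by an arbitrarily prescribed error (replacing $\nu_{V_I}$ when the orientation happens to be incompatible, which the freedom in the ``every portion of positive cone width'' property permits). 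The short $C^1$ interpolations in the gaps then rotate through correspondingly small angles, and by summability across $n$ the oscillation of $\gamma'$ at residually many $t\in\Fonedim$ tends to zero, completing the argument.
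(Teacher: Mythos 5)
Your overall architecture (nested intervals, shrinking balls meeting $\Fhidim$, pieces of detecting curves glued by short interpolations, a residual exceptional set for differentiability) is in the spirit of the paper's construction, but the step you yourself flag as ``the main obstacle'' is exactly where the proposal breaks down, and your fix for it does not work. The property of having every portion of positive cone width only asserts the \emph{existence} of some $C^1$ curve $\nu_U$ with $\leb(\nu_U^{-1}(U\cap\Fhidim))>0$; Remark~\ref{rem:portion} lets you prescribe its speed, not its direction. There is no reason why, for a small sub-ball $V_I\subseteq V_{I'}$, the available detecting curves should have tangents close to those of $\nu_{V_{I'}}$: the set $\Fhidim$ may be detectable only in wildly different directions at different scales around the same point. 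So the claim that ``one can arrange that the tangent of $\nu_{V_I}$ \dots differs from the parent's tangent by an arbitrarily prescribed error'' is unjustified, and without it the tangents along the nested sequence need not converge at any point, the summability of the angle changes fails, and the asserted ``category argument'' has no mechanism behind it. The paper's proof does \emph{not} force children's tangents to track the parent's; instead it partitions $\Sphere^{d-1}$ into patches $S_\beta$ indexed by an ordered tree, attaches to each ball the \emph{minimal} index $\beta$ admitting a detecting curve with directions in $B(S_\beta,2^{-k})$, proves these indices are strictly increasing along nested components (condition~(F)(vi)), deduces that the directions stabilise, and then defines the residual set $E_\infty$ as the points where the limiting index is locally maximal --- this local maximality is what upgrades pointwise convergence of tangents to actual differentiability of the limit curve (Lemmas~\ref{lemma:Omega_k_Einf_prop} and~\ref{lemma:differentiable}). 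Some substitute for this combinatorial device is indispensable.

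There is a second, independent problem with making $\Fonedim:=\bigcap_n\bigcup\mathcal I_n$ a Cantor set whose children fill ``a fixed positive proportion'' of each parent. First, a fixed proportion $\lambda<1$ at every stage yields a set of measure zero (the middle-thirds Cantor set has $\lambda=2/3$), so every portion of positive measure fails; you would need proportions $\lambda_n$ with $\prod_{n\ge m}\lambda_n>0$. Second, and worse, this requirement conflicts with the rest of your construction: a unit-speed $C^1$ piece defined on a child interval $I$ occupying a substantial fraction of $|I'|$, but with image inside a ball $V_I$ of diameter tending to $0$, must reverse direction many times, destroying~\eqref{key3a}. The paper resolves this by letting the replaced intervals $H_{k,j}$ be a \emph{vanishingly small} fraction of the dyadic intervals and taking $\Fonedim$ to be not the Cantor set of chosen intervals but the preimage $\gamma_\infty^{-1}(\Fhidim)$ (minus small throw-away sets); its positive measure in every portion comes from $\leb(\nu_{k,j}^{-1}(\Fhidim))>0$ together with quantitative bounds ($\alpha_k$) on how much later stages can disturb this set. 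Finally, note that matching endpoint values with $1$-Lipschitz interpolations across gaps is not always possible; the paper allows Lipschitz constants up to $2$ throughout and only recovers unit speed by an arc-length reparametrisation at the very end.
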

The quantity $\osci{\gamma'}{[t-\delta,t+\delta]}$ of~\eqref{key3a} should be understood in the natural way; for a more precise definition see Section~\ref{sec:construction},~\eqref{eq:def-osc}. 
\begin{remark}
	We point out that Theorem~\ref{lemma:key} and Proposition~\ref{th:sol} combine to give the following statement, which may be viewed as a generalisation of the one-dimensional result of \cite{petruska93} to all higher dimensions:
\newline	
	\emph{An analytic set $\A\subseteq (0,1)^{d}$ cannot be covered by a countable union of closed, purely unrectifiable sets if and only if there exists a $1$-Lipschitz curve $\gamma\colon[0,1]\to (0,1)^{d}$ and a non-empty, closed set $F\subseteq[0,1]$ having every portion of positive measure such that $\gamma^{-1}(\A)\cap\Diff(\gamma)$ intersects $F$ in a relatively residual set.}
\end{remark}

To prove Theorem~\ref{lemma:key}, we construct a sequence $(\gamma_{k})_{k=1}^{\infty}$ of Lipschitz curves $\gamma_{k}$ converging uniformly to the desired curve $\gamma$. We postpone this construction until Section~\ref{sec:construction}. For now, let us present a proof of Theorem~\ref{thm:main_result} based on Theorems~\ref{thm:bm} and~\ref{lemma:key}, and Proposition~\ref{th:sol}. 
\begin{proof}[Proof of Theorem~\ref{thm:main_result}]
	By Proposition~\ref{th:sol}, there exists a closed set $\Fhidim\subseteq [0,1]^{d}$ such that $\A$ and $\Fhidim$ satisfy the conditions of Theorem~\ref{lemma:key}. Let $\gamma$, $E$ and $\Fonedim$ be given by the conclusion of Theorem~\ref{lemma:key}. Then $\gamma$, $E$ and $\Fonedim$ satisfy the conditions of Theorem~\ref{thm:bm}. Applying Theorem~\ref{thm:bm}, we conclude that a typical $f\in\lip_{1}([0,1]^{d})$ has points of differentiability where the derivative has magnitude one in $\gamma(E)\subseteq \A$.
\end{proof}

\subsection{Application in Universal Differentiability Set Theory}
Recall that purely unrectifiable sets fail badly to have the universal differentiability property. However, there are examples which show that such sets may provide surprisingly many differentiability points of some Lipschitz functions. Cs\"ornyei, Preiss and Ti\v ser construct in~\cite{CPT_2005} a universal differentiability set $E\subseteq \R^{2}$, a purely unrectifiable subset $P\subseteq E$ and a Lipschitz function $h\colon \R^{2}\to\R$ such that all differentiability points of $h$ in the universal differentiability set $E$ are captured by $P$, that is,
\begin{equation}\label{eq:all_diff_pts_pu}
\Diff(h)\cap E\subseteq P.
\end{equation}
In the new paper~\cite{dymond2019typical}, the first named author shows that by a modification of this construction, the set $P$ may additionally capture all differentiability points in $E$ of a typical Lipschitz function in the shifted $\lip_{1}$ space $X=h+\lip_{1}([0,1]^{2})$. In other words,~\eqref{eq:all_diff_pts_pu} holds not just for $h$, but for a typical $f\in X$.
This naturally invites the question of whether it is possible to find $E$ and $P$ so that~\eqref{eq:all_diff_pts_pu} holds for a typical $f$ in the natural space $\lip_{1}([0,1]^{d})$ without any shift. As an application of the dichotomy between typical differentiability and typical non-differentiability sets, see Theorems~\ref{thm:typ_ds} and~\ref{thm:typ_nds}, and Corollary~\ref{cor:typ_dich}, 
 we establish that this is not possible. Although Theorem~\ref{thm:app_uds} shows that purely unrectifiable sets cannot capture all points of differentiability of a typical Lipschitz function within a given universal differentiability set, the main result of~\cite{dymond2019typical} asserts that purely unrectifiable sets may nonetheless capture `equivalently' large sets of differentiability points of a typical Lipschitz function.
 
\begin{theorem}\label{thm:app_uds}
	Let $U\subseteq[0,1]^{d}$ be a universal differentiabiliity set and $V\subseteq U$ be a subset with the property that
	\begin{equation*}
	\Diff(f)\cap U\subseteq V
	\end{equation*}
	for a typical $f\in\lip_{1}([0,1]^{d})$. Then $V$ is  not a purely unrectifiable set.
\end{theorem}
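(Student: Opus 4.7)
The plan is to argue by contradiction: suppose that $V$ is purely unrectifiable and derive a contradiction by combining the universal differentiability property of $U$ with Theorem~\ref{thm:main:nondiff}.

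First, I establish that $V$ is a typical differentiability set. Since $U$ is a universal differentiability set, every $f\in\lip_{1}([0,1]^{d})$---and in particular a typical such function---satisfies $\Diff(f)\cap U\ne\emptyset$. The hypothesis $\Diff(f)\cap U\subseteq V$ for a typical $f$ then yields $\Diff(f)\cap V\ne\emptyset$ for a typical $f$; in other words, a typical $f$ has differentiability points inside $V$.

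Next, I apply Theorem~\ref{thm:main:nondiff} in its contrapositive form: if $V$ were contained in any $F_{\sigma}$ purely unrectifiable subset of $(0,1)^{d}$, then a typical $f\in\lip_{1}([0,1]^{d})$ would be nowhere differentiable on $V$, contradicting the previous paragraph. Therefore $V$ cannot be covered by any $F_{\sigma}$ purely unrectifiable set.

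The final and most delicate step---and the main obstacle---is to promote ``$V$ is not covered by an $F_{\sigma}$ purely unrectifiable set'' to ``$V$ is not purely unrectifiable'', which is what we need to contradict our assumption. When $V$ is analytic, this should follow from Proposition~\ref{th:sol} (Solecki's dichotomy) combined with M\'athe's identification of Borel purely unrectifiable sets with uniformly purely unrectifiable sets: together they imply that every analytic purely unrectifiable subset of $[0,1]^{d}$ admits an $F_{\sigma}$ purely unrectifiable cover, closing the loop. For $V$ without descriptive-set regularity, my intention is to replace $V$ by a suitable analytic envelope $\widetilde V\supseteq V$ which still satisfies $\Diff(f)\cap U\subseteq \widetilde V$ for a typical $f$---for instance, by pushing forward an $F_{\sigma}$ null cover of $\gamma^{-1}(V)$ along each curve $\gamma$ from a countable dense family of Lipschitz curves containing those produced by Theorem~\ref{lemma:key} applied to $U$---and then apply the analytic case to $\widetilde V$.
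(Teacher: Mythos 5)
Your first two steps are correct: $V$ is indeed a typical differentiability set (since $U$ is a UDS, $\Diff(f)\cap U\neq\emptyset$ for every $f$, hence $\Diff(f)\cap V\neq\emptyset$ for a typical $f$), and Theorem~\ref{thm:main:nondiff} then shows that $V$ cannot be covered by an $F_{\sigma}$ purely unrectifiable set. The gap is your final step: the implication ``not coverable by an $F_{\sigma}$ purely unrectifiable set $\Rightarrow$ not purely unrectifiable'' is false, even for $G_{\delta}$ sets, and neither Proposition~\ref{th:sol} nor M\'athe's theorem repairs it. The counterexample is exactly the one in the remark after Corollary~\ref{cor:typ_dich}: let $N\subseteq[0,1]$ be a dense $G_{\delta}$ Lebesgue null set, embedded in a line segment in $[0,1]^{d}$. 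This set is $\H^{1}$-null, hence purely unrectifiable; but any $F_{\sigma}$ purely unrectifiable cover $\bigcup_{n}P_{n}$ would meet the segment in countably many closed null sets covering $N$, contradicting the Baire category theorem. So your claim that every analytic purely unrectifiable set admits an $F_{\sigma}$ purely unrectifiable cover is false, the loop does not close, and the analytic-envelope fallback is moot because already the analytic case fails.

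The paper's proof avoids this by working with $U\setminus V$ instead of $V$. For a typical $f$ one has $\Diff(f)\cap(U\setminus V)=\emptyset$, so $U\setminus V$ is a typical non-differentiability set, and Theorem~\ref{thm:typ_nds} makes it purely unrectifiable. If $V$ were also purely unrectifiable, then so would be the union $U=V\cup(U\setminus V)$, hence $U$ would be cone unrectifiable, and \cite[Theorem~1.1]{maleva_preiss2018} --- which applies to an arbitrary (cone) unrectifiable set, with no $F_{\sigma}$ or descriptive-set hypothesis --- yields a Lipschitz function nowhere differentiable on $U$, contradicting the assumption that $U$ is a universal differentiability set. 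The point you are missing is that the contradiction must be extracted from the universal differentiability of $U$ via the Maleva--Preiss construction rather than from the typical-differentiability dichotomy alone, precisely because that dichotomy is phrased in terms of $F_{\sigma}$ covers and not in terms of pure unrectifiability itself.
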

\begin{proof}
By assumption, the set $U\setminus V$ is a typical non-differentiability set. Hence, Theorem~\ref{thm:typ_nds} implies that the set $U\setminus V$ is purely unrectifiable. If we assume that $V$ is also purely unrectifiable, we conclude that their union $U$ is purely unrectifiable, hence a cone unrectifiable set, see~\cite[Definition~1.7 and Remark~1.8]{maleva_preiss2018}. Applying~\cite[Theorem~1.1]{maleva_preiss2018} to the set $U$ we obtain a Lipschitz function $g$ which is non-differentiable everywhere in $U$, contrary to $U$ being a universal differentiability set.	
\end{proof}

\section{Typical differentiability inside Lipschitz curves}\label{sec:typ_diff_curves}
In this section we prove Theorem~\ref{thm:bm}.
\begin{defn}\label{def:affine_modulo_F}
	Let $\gamma\colon [0,1]\to (0,1)^{d}$ be a Lipschitz curve and $\Fonedim\subseteq [0,1]$ be a closed set. We say that $\gamma$ is \emph{affine modulo $\Fonedim$} if $\gamma$ is affine on each component of $[0,1]\setminus\Fonedim$.
\end{defn}
The next lemma allows us to assume that the Lipschitz curve given by the hypothesis of Theorem~\ref{thm:bm} is affine modulo $\Fonedim$.
\begin{lemma}\label{lem:cond-C}
If $\gamma\colon[0,1]\to (0,1)^d$ is a Lipschitz curve, $\Fonedim\subseteq[0,1]$ is a closed set, $E\subseteq \Fonedim$ is a relatively residual subset of $\Fonedim$ and $\gamma'(t)$ exists for every $t\in E$, then we may redefine $\gamma$ and $E$ as $\gamma_1$ and $E_1$ in such a way that $E_1\subseteq E$ is a relatively residual subset of $\Fonedim$, $\gamma_1\colon [0,1]\to (0,1)^{d}$ is a Lipschitz curve with $\lip(\gamma_1)\le\lip(\gamma)$, $\gamma_1(t)=\gamma(t)$ for $t\in E_1$,  $\gamma_1$ is differentiable at every $t\in E_1$ with $\gamma_1'(t)=\gamma'(t)$ and $\gamma_1$ is affine~modulo~$\Fonedim$.
\end{lemma}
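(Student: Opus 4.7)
The natural construction is to leave $\gamma$ unchanged on $F$ and replace it with its affine interpolation on each component of $[0,1]\setminus F$. After verifying the Lipschitz bound and the codomain, I will identify an at-most-countable set $B\subseteq F$ such that $E_1 := E \setminus B$ remains residual in $F$ and $\gamma_1$ is differentiable at every point of $E_1$ with the correct derivative.

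\textbf{Construction.} Decompose the open set $[0,1]\setminus F$ into its (at most countably many) connected components $\bigsqcup_n(a_n,b_n)$; each endpoint lies in $F\cup\{0,1\}$. Define $\gamma_1 := \gamma$ on $F$, and on each $(a_n,b_n)$ let $\gamma_1$ be the affine map sending $a_n\mapsto\gamma(a_n)$ and $b_n\mapsto\gamma(b_n)$ (setting $\gamma_1(0) := \gamma(0)$ or $\gamma_1(1) := \gamma(1)$ if a component touches the boundary). Each affine segment has slope of norm at most $\lip(\gamma)$, and a short case analysis on pairs $s<t$ (both in $F$; both in the same component; in different components; mixed) gives $\lip(\gamma_1)\leq\lip(\gamma)$. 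Convexity of $(0,1)^d$ yields $\gamma_1([0,1])\subseteq(0,1)^d$, and by construction $\gamma_1$ is affine modulo $F$.

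\textbf{Excising `bad' points and proving differentiability.} Let $B\subseteq F$ consist of those endpoints $a_n,b_n$ of the components that lie in $F$; it is at most countable. In the situation of interest $F$ has no isolated points (indeed every portion of positive measure), so $B$ is meagre in $F$ and $E_1 := E\setminus B$ is residual in $F$. Fix $t\in E_1$. If $t$ is interior to $F$ then $\gamma_1\equiv\gamma$ near $t$ and differentiability is immediate. Otherwise $t$ is a limit of components of $[0,1]\setminus F$. The key observation is that, since for every $\varepsilon>0$ only finitely many components can have length greater than $\varepsilon$, the components $(a,b)$ approaching $t$ must satisfy $b-a\to 0$; hence $a\to t$ and $b\to t$ \emph{simultaneously}. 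For $s$ close to $t$, either $s\in F$ and the difference quotient of $\gamma_1$ coincides with that of $\gamma$, or $s\in(a,b)$ for some such component; writing $s=(1-\lambda)a+\lambda b$, linearity gives
\begin{equation*}
\gamma_1(s)-\gamma(t)-\gamma'(t)(s-t) = (1-\lambda)\phi(a)+\lambda\phi(b), \quad \phi(u):=\gamma(u)-\gamma(t)-\gamma'(t)(u-t).
\end{equation*}
Since $t\notin\{a,b\}$, the points $a,s,b$ lie on the same side of $t$, so $(1-\lambda)|a-t|+\lambda|b-t|=|s-t|$. Dividing and using $\phi(u)/|u-t|\to 0$ as $u\to t$ produces $|\gamma_1(s)-\gamma(t)-\gamma'(t)(s-t)|/|s-t|\to 0$, i.e.\ $\gamma_1'(t)=\gamma'(t)$.

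\textbf{Main obstacle.} The delicate point is precisely the differentiability analysis for $t\in E_1$ accumulated by components: a priori one endpoint of a nearby component could approach $t$ while the other remains far away, destroying the difference quotient. The observation that components of $[0,1]\setminus F$ accumulating at a single point must shrink to length zero removes this scenario. Ancillary technicalities --- namely, handling isolated points of $F$ if any exist, and verifying the `derivative bounded away from zero a.e.'\ clause implicit in the term `Lipschitz curve' --- are dispatched by a brief preliminary reduction, and cause no difficulty in the only regime actually needed downstream, where $F$ has every portion of positive measure.
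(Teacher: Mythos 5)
Your proof is correct and follows essentially the same route as the paper's: affine interpolation on the components of $[0,1]\setminus F$, excision of the countable set of component endpoints from $E$, and differentiability at $t\in E_1$ via the observation that only finitely many components exceed any given length, combined with the convexity identity for a point inside a short component whose endpoints lie on the same side of $t$. You are in fact slightly more explicit than the paper about the edge cases (isolated points of $F$, the a.e.\ lower bound on $\|\gamma_1'\|$), which the paper passes over silently and which are indeed harmless in the only regime where the lemma is used.
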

\begin{proof}
Note that $(0,1)\setminus \Fonedim$ is an open set, hence it is equal to the union $\bigcup_{n=1}^{\infty}(a_n,b_n)$ of open, disjoint intervals. Let $E_{1}=E\setminus \bigcup_{n\ge1}\{a_n,b_n\}$; re-define $\gamma$ on each of $(a_n,b_n)$ in an affine way and call the new curve $\gamma_{1}$. Note that $E_{1}$ is a relatively residual subset of $\Fonedim$ and that $\gamma_{1}\colon [0,1]\to (0,1)^{d}$ is a Lipschitz curve with $\lip(\gamma_{1})\leq \lip(\gamma)$ and $\gamma_{1}(t)=\gamma(t)$ for all $t\in E_{1}$.

To check that $\gamma_1$ is differentiable on $E_1$, let us fix any $t\in E_1$ and $\eps>0$. As $\gamma$ is differentiable at $t$, let $v\in\R^d$ and $\delta>0$ be such that
$\norm{\gamma(t+h)-\gamma(t)-hv}\le\eps\abs{h}$ for all $\abs{h}<\delta$. Let $N=\{n\ge1\colon (b_n-a_n)\ge\delta/2\}$. Note that the set $N$ is finite, and $t$ has positive distance from the set $U=\bigcup_{n\in N}(a_n,b_n)$. Let $\delta_1=\min(\dist(t,U), \delta/2)$ and assume $\abs{h}<\delta_1$. If $t+h\not\in\bigcup_{n=1}^{\infty}(a_n,b_n)$, then $\gamma_1(t+h)=\gamma(t+h)$ and $\gamma_1(t)=\gamma(t)$, so that
\begin{equation}\label{eq:diff_gamma1}
\norm{\gamma_1(t+h)-\gamma_1(t)-hv}\le\eps\abs{h}.
\end{equation}
If $n\ge1$ is such that $t+h\in(a_n,b_n)$, then $n\not\in N$, i.e.\ $(b_n-a_n)<\delta/2$. Hence using $\abs{h}<\delta/2$, we get $\abs{a_n-t},\abs{b_n-t}<\delta$. We thus have, using $\gamma_1(a_n)=\gamma(a_n)$ and $\gamma_1(b_n)=\gamma(b_n)$, that
\[
\norm{\gamma_1(a_n)-\gamma_1(t)-(a_n-t)v}\le\eps\abs{a_n-t}
\text{ and }
\norm{\gamma_1(b_n)-\gamma_1(t)-(b_n-t)v}\le\eps\abs{b_n-t}.
\]
As $t\not\in[a_n,b_n]$, we either have that both $(a_n-t)$ and $(b_n-t)$ are positive, or both are negative.
Thus if $t+h=\alpha a_n+(1-\alpha)b_n$, for $\alpha\in(0,1)$, then
\begin{align*}
&\norm{\gamma_1(t+h)-\gamma_1(t)-hv}
=
\norm{\alpha\gamma_1(a_n)+(1-\alpha)\gamma_1(b_n)-\gamma_1(t)-hv}\\
&\le
\alpha \norm{\gamma_1(a_n)-\gamma_1(t)-(a_n-t)v}
+(1-\alpha)\norm{\gamma_1(b_n)-\gamma_1(t)-(b_n-t)v}\\
&\le
\eps\abs{\alpha(a_n-t)+(1-\alpha)(b_n-t)}
=\eps\abs{h},
\end{align*}
verifying~\eqref{eq:diff_gamma1}.
\end{proof}
\begin{defn}
Let $\gamma\colon [0,1]\to (0,1)^{d}$ be a Lipschitz curve, $I\subseteq[0,1]$ be an interval, $u\in \Sphere^{d-1}$ and $\myeps>0$. We say that $\gamma$ is \emph{$\myeps$-flat in direction~$u$ around $I$} if for all $t_1,t_2\in[0,1]$ with $\dist(t_i,I)<\lebmi{I}$ it holds that
\begin{equation}\label{eq:flat}
\norm{\gamma(t_1)-\gamma(t_2)-(t_1-t_2) u}\le\myeps\abs{t_1-t_2}.
\end{equation}
There are many cases when we do not need to keep information about the vector $u$. Thus we will often write simply that $\gamma$ is $\myeps$-flat around $I$ to signify that $\gamma$ is $\myeps$-flat around $I$ in some direction~$u\in \Sphere^{d-1}$. \end{defn}
\begin{remark}\label{rem:flat_equiv}
Condition~\eqref{eq:flat} is equivalent to the following: there exists $w_{t_1,t_2}\in\R^d$ with $\norm{w_{t_1,t_2}}\le1$ such that 
\begin{equation}\label{eq:flat_equiv}
\gamma(t_1)-\gamma(t_2)=(t_1-t_2)(u+\myeps w_{t_1,t_2}).
\end{equation}
\end{remark}
\begin{remark}\label{rem:open-closed}
It is not important whether the interval $I$ in the definition of $\myeps$-flatness is open or closed: for $I_1=(a,b)$ and $I_2=[a,b]$ the values of $\lebmi{I_{j}}$ 
and the sets of $t\in[0,1]$ such that $\dist(t,I_{j})<\lebmi{I_{j}}$ are the same.

Trivially, the flatness property passes to subintervals.
\end{remark}
\paragraph{Notation.} Given $t\in \R$ and $\delta>0$ we let 
\begin{equation*}
I_{\delta}(t):=(t-\delta,t+\delta).
\end{equation*}
\begin{defn}
Let $t\in\R$, $\Fonedim\subseteq \R$ and $\eps>0$. We say that $I_{\delta}(t)$ is an \emph{$\eps$-density interval for $\Fonedim$} if 
\begin{equation*}
\leb(I_{r}(t)\setminus \Fonedim)<2r\eps\qquad \text{for every $r\in (0,\delta]$.}
\end{equation*}
\end{defn}
\begin{remark}\label{rem:density-interval}
Suppose $Y\subseteq\R$ is open, $X\subseteq Y$ has positive measure and let $\eps>0$. Then for almost all $t\in X$ there is an $\eps$-density interval $I_\delta(t)$ for $X$ such that $I_\delta(t)\subseteq Y$. 
This follows from the Lebesgue Density theorem, see~\cite[Corollary~2.14~(1)]{mattila_1995}.
\end{remark}
\begin{lemma}\label{cond-C-ued}
\label{lem:flat}
Let $\gamma\colon [0,1]\to (0,1)^{d}$ and $E\subseteq \Fonedim\subseteq [0,1]$ satisfy the hypotheses of Theorem~\ref{thm:bm} and suppose that $\gamma$ is affine modulo $\Fonedim$. Then for every interval $(a,b)\subseteq [0,1]$ for which $(a,b)\cap \Fonedim\neq \emptyset$ and $\myeps\in(0,1)$ there exist $u\in \Sphere^{d-1}$ and an open interval $I\subseteq (a,b)$ such that $\gamma$ is $\myeps$-flat in direction~$u$ around $I$ and $I\cap \Fonedim\neq \emptyset$. 
\end{lemma}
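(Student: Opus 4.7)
The plan is to use a Baire category argument to upgrade the pointwise differentiability of $\gamma$ on the residual set $E$ to a uniform, quantitative approximately-linear condition on a relatively open subset of $\Fonedim$, and then combine this with affineness modulo $\Fonedim$ to bound $\gamma'$ almost everywhere on a whole subinterval. Set $\eta := \myeps/3$ and, for each $n \in \N$, define
\[
D_n := \bigl\{t \in [0,1] : \exists\, v \in \R^d \text{ with } \norm{\gamma(t+h)-\gamma(t)-hv} \leq \eta\abs{h} \text{ whenever } \abs{h} < 1/n,\ t+h\in[0,1]\bigr\}.
\]
A standard compactness argument (admissible witnesses $v$ are bounded by $\lip(\gamma)+\eta$, and one passes to a convergent subsequence) shows each $D_n$ is closed in $[0,1]$. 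Every point of $E$ lies in some $D_n$, so the residual subset $E\cap \Fonedim\cap(a,b)$ of the Baire space $\Fonedim\cap(a,b)$ is covered by the closed sets $\Fonedim\cap(a,b)\cap D_n$; Baire category therefore yields $n_0$ for which $\Fonedim\cap(a,b)\cap D_{n_0}$ has non-empty interior relative to $\Fonedim$. After restricting to a suitable connected component, I obtain an open interval $W\subseteq (a,b)$ such that $V := W\cap \Fonedim \neq \emptyset$ and $V\subseteq D_{n_0}$.

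I next select $t_0\in V\cap E$ (non-empty, since $E$ is residual in $\Fonedim$ hence in $V$) and set $u := \gamma'(t_0)\in\Sphere^{d-1}$. The key \emph{calibration} step is: for every $t\in V$ with $\abs{t-t_0}<1/n_0$, any $D_{n_0}$-witness $v(t)$ at $t$ satisfies $\norm{v(t)-u}\leq 2\eta$. This is obtained by applying the defining inequality of $D_{n_0}$ at $t_0$ (with increment $t-t_0$) and at $t$ (with increment $t_0-t$), and subtracting. I then choose $\delta>0$ so small that $I := I_\delta(t_0)$ has $I^+ := I_{3\delta}(t_0) \subseteq W$, $3\delta < 1/n_0$, and $I^+$ avoids the (at most two) components of $[0,1]\setminus \Fonedim$ that cross $\partial W$; the last requirement is feasible because $t_0\in \Fonedim$ lies at positive distance from each such finitely-many bad component. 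With this choice, $t_0\in I\cap \Fonedim$, and every component $(a_k,b_k)$ of $[0,1]\setminus \Fonedim$ meeting $I^+$ lies entirely inside $W$, so its endpoints belong to $V$.

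To finish, I argue that $\norm{\gamma'(s)-u}\leq \myeps$ for a.e.\ $s\in I^+$; the desired flatness inequality then follows immediately since
\[
\norm{\gamma(t_1)-\gamma(t_2)-(t_1-t_2)u} = \Bigl\|\int_{t_2}^{t_1}\bigl(\gamma'(s)-u\bigr)\,ds\Bigr\| \leq 3\eta\abs{t_1-t_2} = \myeps\abs{t_1-t_2}.
\]
For $s$ in a component $(a_k,b_k)\subseteq I^+$, affineness modulo $\Fonedim$ gives $\gamma'(s) = v_k := (\gamma(b_k)-\gamma(a_k))/(b_k-a_k)$; the $D_{n_0}$-inequality at $a_k\in V$ applied with $h=b_k-a_k$ yields $\norm{v_k-v(a_k)}\leq\eta$, and the calibration step adds $\norm{v(a_k)-u}\leq 2\eta$, so $\norm{v_k-u}\leq 3\eta$. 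For a.e.\ $s\in V\cap I^+$, namely at Lebesgue points of $\gamma'$, letting $h\to 0$ in the $D_{n_0}$-inequality at $s$ gives $\norm{\gamma'(s)-v(s)}\leq\eta$, and combining once more with the calibration step bounds $\norm{\gamma'(s)-u}$ by $3\eta=\myeps$.

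The crux of the proof is the Baire step: it is precisely the device that converts the purely pointwise hypothesis ``$\gamma$ is differentiable on $E$'' into the uniform condition ``$V\subseteq D_{n_0}$'', which is what allows one to control $\gamma'$ on a whole neighborhood. Without this promotion, $\gamma'$ could oscillate arbitrarily in every deleted neighborhood of $t_0$, blocking any Lipschitz estimate for $\gamma - t\,u$. The remaining work — propagating the uniform bound through the affine components of $[0,1]\setminus \Fonedim$ — is routine bookkeeping ensured by the affine-modulo-$\Fonedim$ hypothesis and the placement of $I$ inside $W$.
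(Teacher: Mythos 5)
Your overall strategy is essentially the paper's: a Baire-category argument on closed sets encoding a uniform approximate-differentiability condition (your $D_n$ play the role of the paper's sets $E_{k,m}$, which use a countable dense family of directions and two-point difference quotients rather than an existential witness), followed by a local analysis around a point of $\Fonedim$ inside the good set, with affineness modulo $\Fonedim$ controlling the gaps. The Baire step, the closedness of the $D_n$, and the choice of $t_0\in V\cap E$ are all sound, and your ``bound $\gamma'$ a.e.\ and integrate'' finish is a legitimate repackaging of the paper's direct two-point verification. However, two steps as written would fail.

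First, the treatment of the components of $[0,1]\setminus \Fonedim$ has a gap. You bound the slope $v_k$ of $\gamma$ on a component $(a_k,b_k)$ meeting $I^{+}$ by applying the $D_{n_0}$-inequality at $a_k$ with increment $h=b_k-a_k$; this needs $b_k-a_k<1/n_0$ and $\abs{a_k-t_0}<1/n_0$, neither of which is guaranteed, since such a component need only lie inside $W$, whose length you do not control. Moreover, your requirement that $I^{+}$ avoid the components crossing $\partial W$ is not always feasible: $t_0$ may itself be an endpoint of such a component (e.g.\ $\Fonedim$ may have a gap of length $\geq 1/n_0$ beginning exactly at $t_0$), in which case every neighbourhood of $t_0$ meets it and its far endpoint need not lie in $V$. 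Both problems are repaired by one observation: since $\gamma$ is affine on the component, its slope can be read off from an arbitrarily short increment taken at the endpoint $e_k$ of the component nearest to $t_0$, and that endpoint automatically lies in $\Fonedim\cap I^{+}\subseteq V\subseteq D_{n_0}$ with $\abs{e_k-t_0}<3\delta<1/n_0$, so the $D_{n_0}$-inequality (with small $h$) and the calibration both apply there. (This is exactly how the paper handles the gaps: it only ever uses the gap endpoints that fall inside the small interval.) Second, the calibration constant is off: a witness $v(t_0)$ for $t_0\in D_{n_0}$ need not equal $u=\gamma'(t_0)$ --- only $\norm{v(t_0)-u}\leq\eta$ follows, by letting $h\to0$ --- so your subtraction yields $\norm{v(t)-u}\leq 3\eta$, not $2\eta$, and the final bound becomes $4\eta=4\myeps/3>\myeps$. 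Taking $\eta=\myeps/4$ (or smaller) fixes this. With these repairs the proof goes through.
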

\begin{proof}
	Let $(a,b)\subseteq [0,1]$ with $(a,b)\cap \Fonedim\neq \emptyset$. Choose a set $\{u_k\}$ of unit vectors, dense in the unit sphere $\Sphere^{d-1}$, and consider a family of sets
	\begin{equation}\label{eq:def_ekm}
	E_{k,m}=\{r\in [a,b]\colon
	\norm{\tfrac{\gamma(s)-\gamma(r)}{s-r}-u_k}\le\myeps \text{ for all }s\in[a,b] \text{ with }0<\abs{s-r}<1/m
	\}.
	\end{equation}
	Note that each $E_{k,m}$ is closed and $\bigcup_{k,m}E_{k,m}\supseteq E\cap[a,b]$. 
	
	Since $E\cap [a,b]$ is relatively residual in $\Fonedim\cap[a,b]$, there is a pair $(k,m)$ and a non-degenerate open interval $J\subseteq (a,b)$ such that $E_{k,m}\supseteq J\cap \Fonedim\ne\emptyset$. Let $u=u_k$, $t\in J\cap \Fonedim$ and choose $\Delta>0$ small enough so that $I_{\Delta}(t)\subset J$. Let $0<\delta<\min\bigl(1/(6m),\Delta/3\bigr)$.
	We show $I:=I_{\delta}(t)$ fulfils the assertions of the lemma. Since $t\in I\cap \Fonedim$, we have $I\cap \Fonedim\ne\emptyset$.
	We now verify the flatness of $\gamma$ around $I$ in direction~$u$. Let $t_1,t_2\in [0,1]$ be such that $\dist (t_i,I)<\lebmi{I}=2\delta$. Then $\abs{t_1-t_2}<6\delta<1/m$ and $t_{1},t_{2}\in I_{\Delta}(t)$. If $t_1\in \Fonedim$, then $t_1\in I_\Delta(t)\cap \Fonedim\subseteq J\cap \Fonedim\subseteq E_{k,m}$. Hence~\eqref{eq:flat} is satisfied. Assume now $t_1\not\in \Fonedim$ and consider the decomposition of $I_{3\delta}(t)\setminus \Fonedim$ into the union of countably many disjoint open intervals $V_n=(a_n,b_n)$. We therefore have that $t_1\in V_n$ for some $n\ge1$. If $t_2\in V_n$ too, then the affineness of $\gamma$ on $V_n$ and the fact that the endpoints of $V_{n}$ belong to $I_{3\delta}(t)\cap \Fonedim\subseteq J\cap \Fonedim\subseteq E_{k,m}$, imply that~\eqref{eq:flat} is satisfied. If $t_2\not\in V_n$, then as $t_2\in I_{3\delta}(t)$ and $V_n\subseteq  I_{3\delta}(t)$, we conclude that both $\abs{a_n-t_{i}}$ and $\abs{b_n-t_{i}}$ for $i=1,2$ are less than $6\delta\le1/m$. Hence, using $a_n,b_n\in I_{3\delta}(t)\cap \Fonedim\subseteq J\cap \Fonedim\subseteq E_{k,m}$, we may write inequality~\eqref{eq:def_ekm} with $t_2$ and endpoints of $V_n$, to get~\eqref{eq:flat} for $t_1$, $t_2$.
\end{proof}
\paragraph{Notation.}
Suppose $\gamma\colon[0,1]\to(0,1)^{d}$ is a $1$-Lipschitz curve, $I\subseteq [0,1]$ is an interval, $P\subseteq I$ is finite and $f\colon[0,1]^d\to\R$ is Lipschitz. Let $\sigma,\tau>0$ and consider the set $Y_{\sigma,I,P}=\{y\in[0,1]^{d}\colon\dist(y,\gamma(I))\ge\sigma\}\cup\gamma(P)$. Denote
\begin{equation}\label{eq:conical}
\Phi_{\gamma,f,I,P,\sigma,\tau}(x)=\inf_{y\in Y_{\sigma,I,P}}(f(y)+\tau\norm{x-y}),\qquad x\in[0,1]^{d},
\end{equation}
and call $\Phi_{\gamma,f,I,P,\sigma,\tau}\colon [0,1]^{d}\to\R$ a \emph{conical function}. If $\alpha\in(0,1)$ is a parameter and $\tau>1-\alpha$, we call $\Phi_{\gamma,f,I,P,\sigma,\tau}$ an $\alpha$-conical function.
 
\begin{lemma}\label{lem:conical:easy}
Let $f\colon [0,1]^{d}\to\R$ be a Lipschitz function, $\emptyset\ne Y\subseteq [0,1]^d$ and 
$\tau\ge\Lip(f)$.
Then the conical function $\Phi(x)=\inf_{y\in Y}(f(y)+\tau\norm{x-y})$ is $\tau$-Lipschitz and $\Phi(x)=f(x)$ for $x\in Y$.
\end{lemma}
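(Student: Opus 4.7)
The plan is to verify the two claims directly from the definition of $\Phi$; this is a standard sup/inf convolution computation and I do not expect any serious obstacle.

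First I would check that $\Phi(x)=f(x)$ for $x\in Y$. The inequality $\Phi(x)\le f(x)$ is immediate by using $y=x$ in the infimum. For the reverse, fix any $y\in Y$ and use the Lipschitz property of $f$ together with the hypothesis $\tau\ge\Lip(f)$ to write
\begin{equation*}
f(y)+\tau\norm{x-y}\ge f(x)-\Lip(f)\norm{x-y}+\tau\norm{x-y}=f(x)+(\tau-\Lip(f))\norm{x-y}\ge f(x).
\end{equation*}
Taking the infimum over $y\in Y$ yields $\Phi(x)\ge f(x)$.

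Next I would establish the $\tau$-Lipschitz property. For any $x_{1},x_{2}\in[0,1]^{d}$ and any $y\in Y$, the triangle inequality gives
\begin{equation*}
\Phi(x_{1})\le f(y)+\tau\norm{x_{1}-y}\le f(y)+\tau\norm{x_{2}-y}+\tau\norm{x_{1}-x_{2}}.
\end{equation*}
Taking the infimum over $y\in Y$ on the right-hand side (noting the term $\tau\norm{x_{1}-x_{2}}$ does not depend on $y$) produces $\Phi(x_{1})\le \Phi(x_{2})+\tau\norm{x_{1}-x_{2}}$. Swapping the roles of $x_{1}$ and $x_{2}$ yields the symmetric inequality, so $|\Phi(x_{1})-\Phi(x_{2})|\le\tau\norm{x_{1}-x_{2}}$. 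The only minor point to mention is that the infimum defining $\Phi$ is finite (hence $\Phi$ is a well-defined real-valued function) because $Y$ is non-empty and $f$ is bounded on $[0,1]^{d}$, so each expression $f(y)+\tau\norm{x-y}$ is a finite real number and bounded below by $\min f$.
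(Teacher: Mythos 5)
Your proposal is correct and follows essentially the same route as the paper: the identity $\Phi=f$ on $Y$ via the upper bound $\Phi(x)\le\phi_x(x)$ and the lower bound from $\tau\ge\Lip(f)$, and the $\tau$-Lipschitz property from $\Phi$ being an infimum of $\tau$-Lipschitz functions (which you spell out via the triangle inequality). The paper obtains finiteness of the infimum from the global inequality $\Phi\ge f$ rather than from boundedness of $f$ on the cube, but this is an immaterial difference.
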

\begin{proof}
For any $y\in Y$ and any $x\in[0,1]^d$ we have $f(y)-f(x)\ge-\Lip(f)\norm{x-y}$ implying
$f(y)+\tau\norm{x-y}\ge f(x)+(\tau-\Lip(f))\norm{x-y}\ge f(x)$ which means, for all $x\in[0,1]^d$,
\[\Phi(x)\ge f(x).
\]  
In particular, the values of $\Phi$ are finite.
As for each $y\in Y$, the function 
\begin{equation}\label{eq:phi}
\phi_y(x)=f(y)+\tau\norm{x-y}
\end{equation}
is $\tau$-Lipschitz, we conclude that $\Phi$ is $\tau$-Lipschitz too.
Note that additionally, for $x\in Y$ it trivially holds $\Phi(x)\le\phi_x(x)=f(x)$. Thus $\Phi=f$ on $Y$.
\end{proof}
\begin{lemma}\label{lem:conical:new}
Let $\gamma\colon [0,1]\to(0,1)^{d}$ be a $1$-Lipschitz curve which is $\myeps$-flat around an interval $I\subseteq [0,1]$ in direction~$u\in \Sphere^{d-1}$, where $\myeps\in(0,1/3)$.
	Let $\varepsilon>0$ and $f\colon [0,1]^{d}\to\R$ be a Lipschitz function with $\lip(f)<1$. Then 
for every 
$\alpha\in(0,1)$ there is an $\alpha$-conical function, which we denote by $f_{\varepsilon,I}$, and a closed, null set $N=N_{f,\varepsilon,I}\subseteq I$ with the following properties:
	\begin{enumerate}[(i)]
		\item\label{it:approx_lip} 
		$\lip(f_{\varepsilon,I})<1$ and
		 $\inorm{f_{\varepsilon,I}-f}<\varepsilon$;
		\item\label{it:component} 
There is $\tau\in(1-\alpha,1)$ such that
		for every component $J$ of $I\setminus N$ there is $p=p_{J}\in N$ such that
\[
		f_{\varepsilon,I}(x)=f(\gamma(p))+\tau\norm{x-\gamma(p)}
		\text{ for all }
		x\in\gamma(J)
\]
		and
		the function $f_{\varepsilon,I}$ is continuously differentiable on an open neighbourhood $U_f(\gamma(J))$ of $\gamma(J)$ with 
		\begin{equation}\label{eq:grad_on_UJ}
		\nabla f_{\eps,I}(x)=\tau\frac{x-\gamma(p)}{\norm{x-\gamma(p)}}\qquad \text{ for all }x\in U_f(\gamma(J)).
		\end{equation}
	\end{enumerate}
\begin{remark*}
	Note that the conical function $f_{\eps,I}$ and associated set $N_{f,\eps,I}$ given by the conclusion of Lemma~\ref{lem:conical:new} depend on the value of $\tau$ and the curve $\gamma$. Since we will only ever consider conical functions with respect to a single fixed curve $\gamma$, we suppress this dependency on $\gamma$ in the notation. The value of $\tau$ will eventually be important for us but we suppress it for now to keep the notation tidier.
\end{remark*}
\end{lemma}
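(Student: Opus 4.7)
My plan is to exhibit $f_{\varepsilon,I}$ as a conical function $\Phi_{\gamma,f,I,P,\sigma,\tau}$ for carefully chosen parameters and then to analyse its local structure on a tube around $\gamma(I)$ using a weighted Voronoi-type decomposition generated by the finite set $\gamma(P)$; the boundary of this decomposition will yield the set $N$.

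First I would fix $\tau\in(\max(1-\alpha,\lip(f)),1)$, a non-empty interval since $\lip(f)<1$, and set $r:=(\tau-\lip(f))/(\tau+\lip(f))\in(0,1)$. Then I would pick a small $\sigma>0$ and a finite set $P\subseteq I$ containing the endpoints of $I$ with consecutive gaps at most $\delta$, where $\delta$ is constrained to satisfy $\delta<2r\sigma/(1+\myeps)$ and $(\lip(f)+\tau)(\sigma+(1+\myeps)\delta/2)<\varepsilon$. Define $f_{\varepsilon,I}:=\Phi_{\gamma,f,I,P,\sigma,\tau}$; since $\tau>1-\alpha$ this is an $\alpha$-conical function. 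Lemma~\ref{lem:conical:easy} applied with $\tau\ge\lip(f)$ yields $\lip(f_{\varepsilon,I})\le\tau<1$, $f_{\varepsilon,I}\ge f$ globally, and $f_{\varepsilon,I}=f$ on $Y_{\sigma,I,P}$, so $f_{\varepsilon,I}=f$ wherever $\dist(x,\gamma(I))\ge\sigma$. For $x$ with $\dist(x,\gamma(I))<\sigma$, I pick $t_x\in I$ with $\norm{x-\gamma(t_x)}<\sigma$ and $p\in P$ with $\abs{t_x-p}\le\delta/2$; the $\myeps$-flatness of $\gamma$ yields $\norm{x-\gamma(p)}\le\sigma+(1+\myeps)\delta/2$, and Lipschitzness of $f$ implies
\[
f_{\varepsilon,I}(x)\le f(\gamma(p))+\tau\norm{x-\gamma(p)}\le f(x)+(\lip(f)+\tau)\br{\sigma+(1+\myeps)\delta/2}<f(x)+\varepsilon,
\]
which verifies~(i).

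Next, for $x$ close enough to $\gamma(I)$ I claim the infimum defining $f_{\varepsilon,I}(x)$ is realised in $\gamma(P)$, not at a far point. For $y\in Y_{\sigma,I,P}$ with $\dist(y,\gamma(I))\ge\sigma$ and $\sigma_1:=\dist(x,\gamma(I))$, Lipschitzness of $f$ gives $f(y)+\tau\norm{x-y}\ge f(x)+(\tau-\lip(f))(\sigma-\sigma_1)$, whereas for the $p\in P$ closest to $t_x$ we have $f(\gamma(p))+\tau\norm{x-\gamma(p)}\le f(x)+(\lip(f)+\tau)(\sigma_1+(1+\myeps)\delta/2)$. The constraint $\delta<2r\sigma/(1+\myeps)$ guarantees the existence of $\sigma^*\in(0,\sigma)$ for which the second quantity is strictly smaller than the first whenever $\sigma_1<\sigma^*$. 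Hence on the tube $T:=\{x\in[0,1]^d:\dist(x,\gamma(I))<\sigma^*\}$, we have $f_{\varepsilon,I}(x)=\min_{p\in P}\phi_{\gamma(p)}(x)$, where $\phi_{y}(x):=f(y)+\tau\norm{x-y}$. I now set up the decomposition: for $t\in I$ let $P^*(t)$ denote the set of minimisers of $p\mapsto\phi_{\gamma(p)}(\gamma(t))$ over $p\in P$, set $M:=\{t\in I:\abs{P^*(t)}\ge 2\}$, and define $N:=P\cup M$. As the union of a finite set and a closed set, $N$ is closed. By continuity of $\phi_{\gamma(p)}\circ\gamma$ and finiteness of $P$, $P^*(\cdot)$ is locally constant on $I\setminus N$, so on each component $J$ of $I\setminus N$ it takes a single constant value $\{p_J\}\subseteq P\subseteq N$. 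Thus $f_{\varepsilon,I}(x)=\phi_{\gamma(p_J)}(x)=f(\gamma(p_J))+\tau\norm{x-\gamma(p_J)}$ for $x\in\gamma(J)$. For the required $C^1$-neighbourhood I take $U_f(\gamma(J))$ to be the open set of those $x\in T\setminus\set{\gamma(p_J)}$ for which $\phi_{\gamma(p_J)}(x)<\phi_{\gamma(q)}(x)$ for all $q\in P\setminus\{p_J\}$ and $\phi_{\gamma(p_J)}(x)<f(y)+\tau\norm{x-y}$ for all $y$ with $\dist(y,\gamma(I))\ge\sigma$; these strict inequalities hold on $\gamma(J)$, and $\gamma(p_J)\notin\gamma(J)$ because $\myeps$-flatness with $\myeps<1$ makes $\gamma$ injective on $I$. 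On $U_f(\gamma(J))$, $f_{\varepsilon,I}$ therefore coincides with the single smooth cone $\phi_{\gamma(p_J)}$, delivering the stated gradient formula.

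The main obstacle is showing that $M$ has Lebesgue measure zero. For each pair $p\ne q$ in $P$, the Lipschitz function $h_{p,q}(t):=\phi_{\gamma(p)}(\gamma(t))-\phi_{\gamma(q)}(\gamma(t))$ governs the ties. On the subinterval of $I$ between $p$ and $q$ I would exploit the $\myeps$-flatness (with $\myeps<1/3$) and $\tau>\lip(f)$ to show that $h_{p,q}$ differs from the corresponding straight-case expression, whose slope is $2\tau$, by at most $\tau\myeps(q-p)$ in value and by a proportionally small amount in slope, forcing $h_{p,q}^{-1}(0)$ to be a single point inside that subinterval; outside the subinterval, either $\phi_{\gamma(p)}$ or $\phi_{\gamma(q)}$ is strictly dominated by the cone at a closer point $p'\in P$ (the choice $\tau>\lip(f)$ and the uniform spacing of $P$ being crucial here), so no zero of $h_{p,q}$ can be a simultaneous minimiser and thus cannot contribute to $M$. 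Taking the union over the finitely many pairs $(p,q)$ then shows $M$ is finite, and hence closed and null.
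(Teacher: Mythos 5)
Your construction is the same as the paper's: you take $f_{\varepsilon,I}=\Phi_{\gamma,f,I,P,\sigma,\tau}$ with $\tau>\max(1-\alpha,\lip(f))$, prove (i) exactly as in the paper, show that near $\gamma(I)$ the infimum is attained on the finite set $\gamma(P)$, and read off the component structure and the $C^1$ formula from the tie set of the finitely many cones $\phi_{\gamma(p)}$. All of that is sound. The gap is in the one place you flag as "the main obstacle": the claim that the tie set is finite (or even null).

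Write $g_{p,q}(t)=\norm{\gamma(t)-\gamma(p)}-\norm{\gamma(t)-\gamma(q)}$, so that your $h_{p,q}^{-1}(0)=g_{p,q}^{-1}(c)$ with $c=(f(\gamma(q))-f(\gamma(p)))/\tau$. Your monotonicity argument does work on $(p,q)$ (there $g_{p,q}'\geq 2(1-2\myeps-\myeps^{2})/(1+\myeps)>0$ a.e.), but outside $[p,q]$ the function $g_{p,q}$ is \emph{nearly constant} (for a straight line it is exactly constant equal to $\pm\norm{\gamma(p)-\gamma(q)}$), so its level set at level $c$ can a priori have positive measure for an $\myeps$-flat curve — one can arrange $\gamma$ to follow a piece of the confocal hyperbola $\set{x\colon\norm{x-\gamma(p)}-\norm{x-\gamma(q)}=c}$. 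Your proposed rescue — that at such $t$ one of $\phi_{\gamma(p)},\phi_{\gamma(q)}$ is strictly beaten by a nearer net point — does not go through quantitatively: $p$ can remain a minimiser at $\gamma(t)$ as long as $(\tau-\lip(f))(1-\myeps)\abs{t-p}\leq(\tau+\lip(f))(1+\myeps)\delta/2$, a window of width $K\delta$ with $K=\tfrac{(\tau+\lip f)(1+\myeps)}{(\tau-\lip f)(1-\myeps)}$ that blows up as $\lip(f)\uparrow 1$; and the direct comparison $\phi_{\gamma(p)}(\gamma(t))>\phi_{\gamma(q)}(\gamma(t))$ for $t>q>p$ requires $\tau(1-\myeps)>\lip(f)(1+\myeps)$, which is simply false for admissible data (e.g.\ $\lip(f)=0.9$, $\myeps=0.3$ would force $\tau>1$). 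So for a fixed, arbitrary $\tau$ the tie set can genuinely fail to be null.

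The paper's resolution is different and is the reason the lemma asserts only ``there is $\tau\in(1-\alpha,1)$'' rather than a statement for every such $\tau$: for each pair $(y,z)=(\gamma(p),\gamma(q))$, the sets $M_{y,z,\tau}\setminus S_{y,z}$ are pairwise disjoint as $\tau$ varies (a common point would force $f(y)=f(z)$ and hence membership in $S_{y,z}$, the set where $\norm{\gamma(t)-y}=\norm{\gamma(t)-z}$, which flatness reduces to at most one point). Since $I$ has finite measure, only countably many $\tau$ can make some $M_{y,z,\tau}$ have positive measure, and $\tau$ is then chosen outside this countable exceptional set. You should either adopt this genericity-in-$\tau$ argument or supply a genuinely new proof that the tie set is null for your fixed $\tau$; as written, that step fails.
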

\begin{proof}[Proof of Lemma~\ref{lem:conical:new}] 
	Set $\eta=\frac{\eps(1-\Lip(f))}{64}$, $\sigma=\frac{\eps}{8}$, fix any finite $\eta$-net $P$ of $I$, and let 
\begin{equation}\label{eq:tau}
\tau\in\Bigl(\max\set{\frac{\lip(f)+1}{2},1-\alpha},1\Bigr)
\end{equation}
	be arbitrary. We define $f_{\varepsilon,I}$ as the conical function $\Phi_{\gamma,f,I,P,\sigma,\tau}$ of~\eqref{eq:conical}. We will show that part~\eqref{it:approx_lip} holds without further restriction on $\tau$, and that part~\eqref{it:component} holds with a suitable additional condition on $\tau$.
		
	By Lemma~\ref{lem:conical:easy}, the function~$f_{\eps,I}$ has Lipschitz constant less than or equal to $\tau<1$. If $x\in \R^d$ is such that $\dist(x,\gamma(I))<\sigma$, find $y\in\gamma(P)$ with $\norm{x-y}<\sigma+\Lip(\gamma)\eta\le\sigma+\eta$, then by Lemma~\ref{lem:conical:easy} it follows that $f_{\varepsilon,I}(y)=f(y)$, so that
	\[
	\abs{f_{\varepsilon,I}(x)-f(x)}
	\le
	\abs{f_{\varepsilon,I}(x)-f_{\varepsilon,I}(y)}
	+\abs{f(y)-f(x)}
	\le(\tau+1)(\sigma+\eta)
	<2\frac\eps4
	=\frac\eps2.
	\]
	Hence, using again Lemma~\ref{lem:conical:easy}, we get  $\inorm{f_{\varepsilon,I}-f}<\eps$, completing~\eqref{it:approx_lip}, for all $\tau$ satisfying~\eqref{eq:tau}.

	We now determine an additional mild restriction on $\tau$ satisfying~\eqref{eq:tau}, under which part~\eqref{it:component} is valid.
	Note first that~\eqref{eq:tau} implies 
	$\eta<\frac{\eps(\tau-\Lip(f))}{32}$, from which it follows that $\sigma=\frac{\eps}{8}>\frac{4\tau\eta}{\tau-\Lip(f)}$.
	Consider any $x,y\in \R^d$ such that $\dist(x,\gamma(I))<\eta$ and $\dist(y,\gamma(I))\ge\sigma$. Find $z\in\gamma(P)$ such that 
	$\norm{x-z}\le\eta+\Lip(\gamma)\eta\le2\eta$. Then, using additionally $\Lip(f)-\tau<0$ and $\norm{y-z}\ge\sigma$, we get
	\begin{align*}
	f(z)-f(y)&+\tau\norm{x-z}-\tau\norm{x-y}
	\le
	\Lip(f)\norm{y-z}+2\tau\eta-\tau\Bigl(\norm{y-z}-2\eta\Bigr)\\
	&=(\Lip(f)-\tau)\norm{y-z}+4\tau\eta
	\le(\Lip(f)-\tau)\sigma+4\tau\eta
	<0,
	\end{align*}
	so that 
	$f(y)+\tau\norm{x-y}\ge f(z)+\tau\norm{x-z}$. Using the definition~\eqref{eq:conical} of the conical function $f_{\varepsilon,I}=\Phi_{\gamma,f,I,P,\sigma,\tau}$ we conclude that
	\begin{equation}\label{eq:near_gamma}
	f_{\varepsilon,I}(x)=\min_{y\in \gamma(P)}(f(y)+\tau\norm{x-y}), \qquad \text{for all }x\in B(\image(\gamma),\eta).
	\end{equation}
	Let $\Gamma=\{(y,z)\colon y,z\in\gamma(P)\text{ and }y\ne z\}$ (a finite set).
	Fix a pair $(y,z)\in\Gamma$, then $y=\gamma(p)\ne z=\gamma(q)$, implying $p\ne q$, and let 
	\[M_{y,z,\tau}
	=\{t\in I\colon f(y)+\tau\norm{\gamma(t)-y}=f(z)+\tau\norm{\gamma(t)-z}
	\}.\]
	Each $M_{y,z,\tau}$ is a closed subset of $I$.
	Note that the set $S_{y,z}$ of solutions $t\in I$ of $\norm{\gamma(t)-y}=\norm{\gamma(t)-z}$ cannot contain more than one point. Indeed, if $t_1,t_2\in S_{y,z}$ are distinct, then, as both $\gamma(t_i)$ are equidistant from $y$ and $z$, we get that $\gamma(t_1)-\gamma(t_2)$ is orthogonal to $y-z=\gamma(p)-\gamma(q)$. Hence,
	applying~\eqref{eq:flat_equiv} with $\norm{u}=1$ and $\myeps$ to $\gamma(t_1)-\gamma(t_2)$ and $\gamma(p)-\gamma(q)$ we get
	\[
	{(t_1-t_2)(p-q)}\langle u+\myeps w_{t_1,t_2},u+\myeps w_{p,q}\rangle=0,
	\]
	which is impossible as $t_1\ne t_2$, $p\ne q$ and $3\myeps\in(0,1)$.
	Finally, use that for $\tau_1\ne\tau_2$ the sets $M_{y,z,\tau_1}\setminus S_{y,z}$ and $M_{y,z,\tau_2}\setminus S_{y,z}$ are disjoint to conclude, as  $M_{y,z,\tau}\subseteq I$ for all $\tau$, that there is an at most countable set $T_{y,z}$
	of such $\tau$, satisfying~\eqref{eq:tau}, for which the Lebesgue measure of $M_{y,z,\tau}$ is positive.
	Let $T=\bigcup_{(y,z)\in\Gamma}T_{y,z}$. This is a countable set. 
	In addition to~\eqref{eq:tau}, we now prescribe that $\tau$ lies outside of the countable set $T$.
	 Let $N=N_{f,\varepsilon,I}:=P\cup\bigcup_{(y,z)\in\Gamma} M_{y,z,\tau}$. Then $N$ is a null, closed subset of $I$. Recall that the function $f_{\eps,I}$ is given on $\image(\gamma)$ by~\eqref{eq:near_gamma}.
By the Intermediate Value Theorem, for any two points $x_i=\gamma(t_{i})\in\gamma(I)$, $t_i\in I$, $i=1,2$ and $t_1<t_2$, for which the minimum in the formula~\eqref{eq:near_gamma} for $f_{\eps,I}(x_i)$ is attained at different $y=y_{i}\in\gamma(P)$, $i=1,2$, there has to be a point $t_{3}\in [t_{1},t_{2}]$ with $t_{3}\in M_{y_{1},y_{2},\tau}\subseteq N$. Therefore the first assertion of~\eqref{it:component} is valid. 

For the second assertion of~\eqref{it:component}, it remains to note that the set 
\begin{equation*}
C=\bigcup_{(y,z)\in\Gamma}\{x\in\R^d\colon f(y)+\tau\norm{x-y}=f(z)+\tau\norm{x-z}\}
\end{equation*}
is closed, and for each open component $J$ of $I\setminus N$ there exists an open component $U_f(\gamma(J))$ of $B\bigl(\image(\gamma),\eta\bigr)\setminus C$ which contains $\gamma(J)$. Thus, $f_{\varepsilon,I}\big|_{U_f(\gamma(J))}=\phi_{\gamma(p_J)}\big|_{U_f(\gamma(J))}$, and~\eqref{eq:grad_on_UJ} holds. 
\end{proof}	

\begin{lemma}\label{lem:conical-increase}
Let $\gamma\colon [0,1]\to(0,1)^{d}$ be a $1$-Lipschitz curve which is $\myeps$-flat around an interval $\myI_0\subseteq [0,1]$ in direction~$u\in \Sphere^{d-1}$, where $\myeps\in(0,1/3)$.
Let $(a,b)\subseteq \myI_0$, 
$q\in \myI_0\setminus(a,b)$, $r\in\R$, $\tau>0$,  and let 
$h\colon [0,1]^{d}\to\R$ 
be a Lipschitz function with
\begin{equation*}
h(x)
=
r+\tau\norm{x-\gamma(q)}\qquad \text{for 
$x\in\{\gamma(a),\gamma(b)\}$.}
\end{equation*}
Then
\begin{align*}
\abs{(h\circ\gamma)\big|_a^b-\tau(b-a)}&\le3\myeps\tau(b-a)\qquad \text{if $q\le a\leq b$, and}\\
\abs{(h\circ\gamma)\big|_a^b-\tau(a-b)}&\le3\myeps\tau(b-a)\qquad \text{if $a\leq b\le q$,}	
\end{align*}
where $(h\circ\gamma)\big|_a^b=(h\circ\gamma)(b)-(h\circ\gamma)(a)$.
\end{lemma}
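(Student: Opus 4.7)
The plan is to reduce the statement to a purely geometric inequality about distances. From the hypothesis $h(\gamma(t)) = r+\tau\norm{\gamma(t)-\gamma(q)}$ for $t\in\{a,b\}$, one has $(h\circ\gamma)\big|_a^b = \tau(\norm{\gamma(b)-\gamma(q)} - \norm{\gamma(a)-\gamma(q)})$, so after cancelling $\tau>0$ the claim reduces to
\[
\abs{\norm{\gamma(b)-\gamma(q)} - \norm{\gamma(a)-\gamma(q)} - \sigma(b-a)} \le 3\myeps(b-a),
\]
where $\sigma=+1$ in the first case $q\le a\le b$ and $\sigma=-1$ in the second case $a\le b\le q$. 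By an analogous argument it suffices to handle the first case. Set $X_t := \gamma(t)-\gamma(q)$ and $e_{s,t}:=\gamma(s)-\gamma(t)-(s-t)u$. By the flatness hypothesis $\norm{e_{s,t}}\le\myeps\abs{s-t}$ for all $s,t\in\{a,b,q\}$; in particular $X_t = (t-q)u + e_{t,q}$ for $t\in\{a,b\}$.

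After dispensing with the degenerate case $q=a$ directly (there $\norm{X_a}=0$ and $\abs{\norm{X_b}-(b-a)}\le\myeps(b-a)$ follows from $\norm{X_b-(b-q)u}\le\myeps(b-q)$), I may assume $q<a<b$. Define $\psi_t:=\norm{X_t}/(t-q)$ for $t\in\{a,b\}$; since $X_t/(t-q)=u+e_{t,q}/(t-q)$ and $\norm{e_{t,q}/(t-q)}\le\myeps$, the reverse triangle inequality gives $\psi_t\in[1-\myeps,1+\myeps]$. The telescoping
\[
\norm{X_b} - \norm{X_a} = (b-q)\psi_b - (a-q)\psi_a = (b-a)\psi_b + (a-q)(\psi_b - \psi_a)
\]
splits the target error into two pieces: $(b-a)(\psi_b-1)$, which is trivially bounded in modulus by $\myeps(b-a)$, and $(a-q)(\psi_b-\psi_a)$, which is the main challenge.

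The heart of the argument is showing $(a-q)\abs{\psi_b-\psi_a}\le 2\myeps(b-a)$ uniformly in $(a-q)$: the naive estimate $\abs{\psi_b-\psi_a}\le 2\myeps$ yields only $2\myeps(a-q)$, which may be arbitrarily larger than $(b-a)$. The key is an algebraic identity: direct rearrangement gives
\[
(a-q)X_b - (b-q)X_a = (a-q)\bigl(\gamma(b)-\gamma(a)\bigr) - (b-a)X_a,
\]
and inserting the flatness decompositions $\gamma(b)-\gamma(a)=(b-a)u+e_{b,a}$ and $X_a=(a-q)u+e_{a,q}$ makes the $u$-terms cancel completely, leaving $(a-q)X_b-(b-q)X_a = (a-q)e_{b,a} - (b-a)e_{a,q}$. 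The triangle inequality immediately yields $\norm{(a-q)X_b-(b-q)X_a}\le 2\myeps(a-q)(b-a)$. Dividing by $(a-q)(b-q)$ and applying the reverse triangle inequality,
\[
\abs{\psi_b-\psi_a}\le\norm{\frac{X_b}{b-q}-\frac{X_a}{a-q}}\le\frac{2\myeps(b-a)}{b-q},
\]
so multiplying by $(a-q)\le(b-q)$ gives $(a-q)\abs{\psi_b-\psi_a}\le 2\myeps(b-a)$. Combining the two pieces yields the bound $3\myeps(b-a)$, and multiplying through by $\tau$ completes the case $q\le a\le b$. The case $a\le b\le q$ is entirely analogous, with $\psi_t:=\norm{X_t}/(q-t)$ and the parallel identity $(q-a)X_b-(q-b)X_a = (q-b)e_{b,a}+(b-a)e_{b,q}$.
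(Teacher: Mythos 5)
Your proof is correct and is essentially the paper's argument in different notation: your identity $(a-q)X_b-(b-q)X_a=(a-q)e_{b,a}-(b-a)e_{a,q}$ is exactly the paper's expansion $\gamma(b)-\gamma(q)=(b-q)(u+\myeps w_{a,q})+\myeps(b-a)(w_{b,a}-w_{a,q})$, and your split into the terms $(b-a)(\psi_b-1)$ and $(a-q)(\psi_b-\psi_a)$ mirrors the paper's adding and subtracting of $(b-q)\norm{u+\myeps w_{a,q}}$, yielding the same $\myeps+2\myeps$ error budget. The explicit treatment of the degenerate case $q=a$ is a minor tidiness the paper glosses over.
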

\begin{proof}
In what follows we adopt the notation of Remark~\ref{rem:flat_equiv} and in particular make use of the identity~\eqref{eq:flat_equiv} for points $t_{1},t_{2}\in \myI_0$.
Observe that
\begin{align*}
\gamma(a)-\gamma(q)&=(a-q)u+\myeps(a-q)w_{a,q}
\quad\text{and}\\
\gamma(b)-\gamma(q)&=\gamma(a)-\gamma(q)+(b-a)u+\myeps(b-a)w_{b,a}\\
&=(b-q)u+\myeps(b-q)w_{a,q}+\myeps(b-a)(w_{b,a}-w_{a,q}).
\end{align*}
Hence 
\begin{align*}
\norm{\gamma(a)-\gamma(q)}&=\abs{a-q}\norm{u+\myeps w_{a,q}}
\quad\text{and}\\
\Bigl|\norm{\gamma(b)-\gamma(q)}&-
\abs{b-q}\norm{u+\myeps w_{a,q}}\Bigr|\le2\myeps(b-a).
\end{align*}
Hence, if $q\le a\leq b$, then 
\begin{align*}
&\bigl|(h\circ\gamma)\big|_a^b-\tau(b-a)\bigr|\\
&=\tau
\Bigl|\bigl(\norm{\gamma(b)-\gamma(q)}-\norm{\gamma(a)-\gamma(q)}\bigr)-(b-a)\Bigr|\\
&=\tau
\Bigl|\norm{\gamma(b)-\gamma(q)}-(a-q)\norm{u+\myeps w_{a,q}}-(b-a)\Bigr|\\
&=\tau
\Bigl|\norm{\gamma(b)-\gamma(q)}-(b-q)\norm{u+\myeps w_{a,q}}+(b-a)\norm{u+\myeps w_{a,q}}-(b-a)\Bigr|\\
&\le
2\myeps\tau(b-a)+\tau(b-a)\Bigl|\norm{u+\myeps w_{a,q}}-1\Bigr|
\le
3\myeps\tau(b-a)
\end{align*}
If $a\leq b\le q$, then 
\begin{align*}
\bigl|(h\circ\gamma)\big|_a^b&-\tau(a-b)\bigr|\\
=\tau
\Bigl|\bigl(\norm{\gamma(b)-\gamma(q)}&-\norm{\gamma(a)-\gamma(q)}\bigr)-(a-b)\Bigr|\\
=\tau
\Bigl|\norm{\gamma(b)-\gamma(q)}&-(q-a)\norm{u+\myeps w_{a,q}}-(a-b)\Bigr|\\
=\tau
\Bigl|\norm{\gamma(b)-\gamma(q)}&-(q-b)\norm{u+\myeps w_{a,q}}+(a-b)\norm{u+\myeps w_{a,q}}-(a-b)\Bigr|\\
\le
2\myeps\tau(b-a)&+\tau(b-a)\Bigl|\norm{u+\myeps w_{a,q}}-1\Bigr|
\le
3\myeps\tau(b-a).
\end{align*}
\end{proof}
\begin{lemma}\label{lem:Lip}
If $f\colon[a,b]\to\R$ is a Lipschitz function, $N\subseteq[a,b]$ is a closed null set and 
$(a,b)\setminus N=\bigcup_{n=1}^{\infty}(a_n,b_n)$ is a union of disjoint, open components, then
$f(b)-f(a)=\sum_{n\ge1}\bigl(f(b_n)-f(a_n)\bigr)$. 
\end{lemma}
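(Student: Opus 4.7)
The plan is to invoke absolute continuity of Lipschitz functions and decompose the integral of $f'$ over $(a,b)$ according to the partition $N \cup \bigcup_n (a_n,b_n)$.

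First I would observe that $f$, being Lipschitz on $[a,b]$, is absolutely continuous, so the Fundamental Theorem of Calculus applies: $f(b)-f(a) = \int_a^b f'(t)\,dt$, and likewise $f(b_n)-f(a_n) = \int_{a_n}^{b_n} f'(t)\,dt$ for each $n$. Since $N$ has Lebesgue measure zero and $f'$ is bounded (by $\mathrm{Lip}(f)$ almost everywhere), we have $\int_N f'(t)\,dt = 0$, and hence
\begin{equation*}
f(b)-f(a) \;=\; \int_{[a,b]\setminus N} f'(t)\,dt \;=\; \sum_{n=1}^\infty \int_{a_n}^{b_n} f'(t)\,dt \;=\; \sum_{n=1}^\infty \bigl(f(b_n)-f(a_n)\bigr),
\end{equation*}
where the interchange of sum and integral is justified by the monotone (or dominated) convergence theorem applied to the pairwise disjoint union.

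Absolute convergence of the series is automatic and worth recording: $\sum_n |f(b_n)-f(a_n)| \le \mathrm{Lip}(f) \sum_n (b_n - a_n) \le \mathrm{Lip}(f)(b-a) < \infty$, so the reordering of terms and the interchange above are not problematic.

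There is no real obstacle here; the only point to be slightly careful about is the justification that $\int_N f' = 0$, which follows because $f'$ is essentially bounded and $N$ is null, and the fact that Lipschitz implies absolutely continuous (so the Newton--Leibniz formula applies on $[a,b]$ and on each $[a_n,b_n]$). Alternatively, one could avoid invoking the Fundamental Theorem of Calculus altogether by a direct $\varepsilon$-covering argument: given $\varepsilon > 0$, cover $N$ by finitely many intervals of total length at most $\varepsilon/\mathrm{Lip}(f)$, truncate the series at some large $M$ so the tail contributes less than $\varepsilon$, and telescope the partial sum to match $f(b)-f(a)$ up to an error controlled by $\mathrm{Lip}(f)$ times the measure of the uncovered portion; letting $\varepsilon \to 0$ gives the conclusion. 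Either route works cleanly.
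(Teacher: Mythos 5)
Your proof is correct and is essentially the same as the paper's, which likewise writes $f(b)-f(a)=\int_a^b f'=\sum_n\int_{a_n}^{b_n}f'=\sum_n\bigl(f(b_n)-f(a_n)\bigr)$ using absolute continuity and the nullity of $N$. You simply spell out the justifications the paper leaves implicit.
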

\begin{proof}
Observe that 
\[
f(b)-f(a)
=\int_a^b f'(t)dt
=\sum_{n\ge1}\int_{a_n}^{b_n} f'(t)dt
=\sum_{n\ge1}\bigl(f(b_n)-f(a_n)\bigr).
\]
\end{proof}
\begin{lemma}\label{lem:slope} 
Let $\gamma\colon [0,1]\to(0,1)^{d}$ and $E\subseteq \Fonedim\subseteq [0,1]$ satisfy the hypotheses of Theorem~\ref{thm:bm} and suppose that $\gamma$ is affine modulo $\Fonedim$. Suppose $f\in\Lip_1([0,1]^{d})$ is such that $\Lip(f)<1$. 
Assume an open set $U\subseteq [0,1]$ such that $U\cap \Fonedim$ is dense in $\Fonedim$ is given, $0<\myepsone\leq \myepszero<1/\myconst^3$ and $\epszero\in(0,1)$. 
Suppose 
$\myI\subseteq\myI_0\subseteq[0,1]$ are open intervals such that 
$\gamma$ is $\myepszero$-flat around $\myI_0$ and $\myI\cap \Fonedim\neq \emptyset$.
Suppose further that 
$f_{\epszero,\myI_0}$ is a $\theta$-conical function  given by Lemma~\ref{lem:conical:new}. 
\\
Then 
there is 
an open interval $\myI_1\subseteq \myI\cap U$, such that
$\gamma$ is $\myepsone$-flat around $\myI_1$ and the following statement holds:
\\
\textbf{Approximation property~\ref{lem:slope}a:}
Let $g\in \Lip_1([0,1]^d)$ with 
\begin{equation}\label{eq:g_hyp}
\Lip(g)<1\qquad\text{ and }\qquad \inorm{g-f_{\epszero,\myI_0}}<\myepszero\lebmi{\myI_1}/4,
\end{equation}
$\epsone\in(0,\myepszero\lebmi{\myI_{1}}/4)$ and $g_{\epsone,\myI_1}$
be a $\theta'$-conical function given by Lemma~\ref{lem:conical:new}.
Then there exists an open interval $V$ such that
\begin{enumerate}[(i)]
		\item\label{V_trivial_properties} $\overline V\subseteq \myI_1$, $\lebmi{V}\le\lebmi{\myI_1}/2$ and  
		$V\cap \Fonedim\ne\emptyset$;
		\item\label{g_cts_diff_on_V} $g_{\epsone,\myI_{1}}$ is continuously differentiable on an open neighbourhood of $\gamma(V)$; for points $x$ from this neighbourhood its gradient $\nabla g_{\epsone,\myI_{1}}(x)$ is given by the formula~\eqref{eq:grad_on_UJ} with $\tau\in(1-\myepsone,1)$ and $p\in R_{1}$;
		\item\label{f_g_slope_difference}
		for every $t\in V$ and $s\in [0,1]$ it holds \begin{equation}\label{eq:est-slope}
		\abs{
			(f_{\epszero,\myI_0}(\gamma(s))-f_{\epszero,\myI_0}(\gamma(t)))-
			(g_{\epsone,\myI_1}(\gamma(s))-g_{\epsone,\myI_1}(\gamma(t)))
		}
		\le2\myepszero^{1/3}\abs{s-t}.
		\end{equation}		 
	\end{enumerate}
\end{lemma}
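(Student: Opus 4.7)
The plan is to fix $R_1$ using only the data preceding $g$, and then, for each $g$ satisfying~\eqref{eq:g_hyp}, construct $V=V_g$.

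For the choice of $R_1$, I would use density of $U\cap F$ in $F$, the nullity of $N_{f,\eps,R_0}$ and the Lebesgue density theorem (Remark~\ref{rem:density-interval}) to pick a Lebesgue density point $t_0\in R\cap U\cap F$ of $F$ that lies in some component $K$ of $R_0\setminus N_{f,\eps,R_0}$. Applying Lemma~\ref{lem:flat} with parameter $\theta'$ to a small neighbourhood of $t_0$ inside $R\cap U\cap K$, and then shrinking so the result is an $\eta_0$-density interval for $F$ (for a small absolute constant $\eta_0$), produces an open $R_1\subseteq R\cap U\cap K$ with $\gamma$ being $\theta'$-flat around $R_1$, $R_1\cap F\ne\emptyset$ and $|F\cap R_1|\ge(1-\eta_0)|R_1|$. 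Since $R_1\subseteq K$, part~\eqref{it:component} of Lemma~\ref{lem:conical:new} applied to $f$ provides the explicit single-anchor form $\phi(x):=f_{\eps,R_0}(x)=f(\gamma(p_K))+\tau\|x-\gamma(p_K)\|$ on a neighbourhood of $\gamma(R_1)$, with $p_K\in R_0\setminus K$ on a fixed side (say the left) of $R_1$.

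Given $g$, set $\psi:=g_{\eps',R_1}$ and use part~\eqref{it:component} of Lemma~\ref{lem:conical:new} applied to $g$ to decompose $R_1\setminus N_{g,\eps',R_1}$ into components $\{J_i\}$, each with anchor $p_{J_i}\in R_1\setminus J_i$ and slope parameter $\tau'\in(1-\theta',1)$. Call $J_i$ \emph{good} if $p_{J_i}$ lies on the same side of $J_i$ as $p_K$ does of $R_1$, \emph{bad} otherwise. From~\eqref{eq:g_hyp} and part~\eqref{it:approx_lip} of Lemma~\ref{lem:conical:new}, $\|\phi-\psi\|_\infty\le\theta|R_1|/2$. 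Applying Lemma~\ref{lem:conical-increase} to $\phi\circ\gamma$ across $R_1$ (single anchor $p_K$) and to $\psi\circ\gamma$ piece-by-piece over the $\{J_i\}$ with summation by Lemma~\ref{lem:Lip}, then comparing to the endpoint sup-norm bound, shows that the total length $S_-$ of bad components satisfies $S_-\le C\theta|R_1|$ for an absolute constant $C$. Combining this with $|F\cap R_1|\ge(1-\eta_0)|R_1|$ via a measure-theoretic pigeonhole produces a good component $J$ with $|J|\ge C'\theta^{2/3}|R_1|$ meeting $F$. I then take $V$ as a small open interval around a point of $J\cap F$, placed so that $\overline V\subseteq J$, $|V|\le|R_1|/2$ and $\dist(V,\partial J)\ge\theta^{2/3}|R_1|/2$; parts~\eqref{V_trivial_properties} and~\eqref{g_cts_diff_on_V} are then immediate.

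For~\eqref{f_g_slope_difference}, write $A(r):=\phi(\gamma(r))-\psi(\gamma(r))$ and split on $|s-t|$. If $|s-t|\ge\theta^{2/3}|R_1|/2$ then the trivial estimate $|A(s)-A(t)|\le 2\|\phi-\psi\|_\infty\le\theta|R_1|$ gives $|A(s)-A(t)|\le 2\theta^{1/3}|s-t|$. If $|s-t|<\theta^{2/3}|R_1|/2$ the buffer forces $s\in J$, and Lemma~\ref{lem:conical-increase} applied separately to $\phi$ and $\psi$ on $[t,s]\subseteq J$ — both with matching slope sign since $J$ is good — yields
\[
|A(s)-A(t)|\le|\tau-\tau'|\,|s-t|+3\theta(\tau+\tau')|s-t|\le 7\theta|s-t|\le 2\theta^{1/3}|s-t|,
\]
valid under the hypothesis $\theta<1/\myconst^{3}$. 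The hardest step I expect is extracting the good component $J$ of size at least $\theta^{2/3}|R_1|$ meeting $F$: the $\eta$-net used to construct $\psi$ can be very fine when $\eps'$ is small, so the number of components can be large and the global bound $S_-\le C\theta|R_1|$ alone is insufficient to force a single sizeable good $J$ meeting $F$. One must combine that bound with the near-full $F$-density of $R_1$ through a careful measure-theoretic argument — possibly by refining $t_0$ so that $R_1$ enjoys density at multiple scales.
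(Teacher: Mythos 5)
Your construction of $\myI_1$ (density point of $\Fonedim$ inside a component of $\myI_0\setminus N_{f,\epszero,\myI_0}$, intersected with $U$, made $\myepsone$-flat via Lemma~\ref{lem:flat} and shrunk to a density interval) matches the paper, as does your good/bad decomposition of the components of $\myI_1\setminus N_{g,\epsone,\myI_1}$ and your bound $\leb(\bbad)\le C\myepszero\lebmi{\myI_1}$ obtained from Lemma~\ref{lem:conical-increase}, Lemma~\ref{lem:Lip} and the sup-norm estimate. The gap is exactly where you suspect it, and it is fatal to the route you propose: there need not exist \emph{any} good component $J$ of length $\ge C'\myepszero^{2/3}\lebmi{\myI_1}$. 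The closed null set $N_{g,\epsone,\myI_1}$ contains the finite $\eta$-net $P$ used to build $g_{\epsone,\myI_1}$, whose mesh $\eta\le\epsone(1-\Lip(g))/64$ is controlled by $\epsone$ and $g$, both of which are quantified \emph{after} $\myI_1$ is fixed. Hence every component of $\myI_1\setminus N_{g,\epsone,\myI_1}$ can be shorter than any threshold depending only on $\myepszero$ and $\lebmi{\myI_1}$, and no refinement of the choice of $t_0$ or of the density scales of $\myI_1$ can prevent this. Without a sizeable component there is no buffer, and your case $\abs{s-t}<\myepszero^{2/3}\lebmi{\myI_1}/2$ with $s$ outside the component containing $t$ is unhandled: there Lemma~\ref{lem:conical-increase} compares $\hat g$ against \emph{different} anchors $p_{J_i}$ on the two sides of $s$ and $t$, and a short interval $[t,s]$ may consist almost entirely of bad components, so the global bound on $\leb(\bbad)$ says nothing about $\leb(\bbad\cap[t,s])/\abs{s-t}$.

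The missing idea, which is how the paper closes this gap, is to control all the ratios $\leb(\bbad\cap[t,s])/\abs{s-t}$ (equivalently, the averages of $(\hat f\circ\gamma)'-(\hat g\circ\gamma)'$ over all intervals $[t,s]$) simultaneously for most $t$, rather than trying to avoid bad components geometrically. Concretely: one shows the pointwise gradient estimate $\norm{\nabla\hat f(\gamma(t))-\nabla\hat g(\gamma(t))}\le 8\myepszero$ on the good set $\ggood$ (your slope comparison, done at the level of derivatives), so that $\phi:=((\hat f\circ\gamma)'-(\hat g\circ\gamma)')\chi_{\myI_1}$ is pointwise small on $\ggood$ and merely bounded by $2$ on the small set $\bbad$; then the set $\Fset_2$ of points $t$ for which~\eqref{eq:est-slope} fails for some $s$ satisfies $\mathbb M\phi(t)\ge2\myepszero^{1/3}$ for the uncentred Hardy--Littlewood maximal function, and the $L^2$ maximal inequality yields $\leb(\Fset_2)\le 96\myepszero^{1/3}\lebmi{\myI_1}$. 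Since $\myI_1$ is a $\myepszero$-density interval for $\Fonedim$, the exceptional set (together with the bad set and $N_{g,\epsone,\myI_1}$) cannot cover $\Fonedim\cap\myI_1$, and $V$ is chosen in its complement; no lower bound on component lengths is ever needed. Your per-component estimate via Lemma~\ref{lem:conical-increase} is correct as far as it goes, but it must be converted into this integral/maximal-function form to yield~\eqref{f_g_slope_difference} for arbitrary $s$.
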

\begin{proof}
Consider the closed, null set $\Nzero=N_{f,\epszero,\myI_0}\subseteq \myI_0$ defined by Lemma~\ref{lem:conical:new} for the function $f_{\epszero,\myI_0}$. Since $\lebm{\Nzero}=0$, $\myI\cap \Fonedim\neq\emptyset$ and $\Fonedim$ has every portion of positive measure, we have $\myI\cap \Fonedim\not\subseteq \Nzero$. Hence, we may choose one open component $J_0$ of $\myI_0\setminus \Nzero$ for which $J_0\cap\myI\cap \Fonedim\ne\emptyset$. As $U\cap \Fonedim$ is dense in $\Fonedim$ and $J_0\cap\myI$ is open, we conclude $U\cap J_0\cap\myI\cap \Fonedim\ne\emptyset$. Find then an open interval $J'\subseteq J_0\cap R\cap U$ such that $J'\cap \Fonedim\ne\emptyset$. 
Apply Lemma~\ref{lem:flat} to get an open interval $J''\subseteq J'$ such that 
$\gamma$ is $\myepsone$-flat around $J''$ and $J''\cap \Fonedim\neq \emptyset$.
By Remark~\ref{rem:density-interval}, find a $\myepszero$-density interval $I_\Delta(t_0)$ for $\Fonedim$, such that $I_\Delta(t_0)\subseteq J''$. Let $\myI_1=I_\Delta(t_0)$. Then, using Remark~\ref{rem:open-closed} for the latter statement, we get that
\[
\myI_1
\subseteq J''
\subseteq J'
\subseteq J_0\cap\myI\cap U
\subseteq \myI\cap U
\quad\text{ and }\quad
\gamma
\text{ is $\myepsone$-flat around }
\myI_1.
\]
Note that all assertions of the lemma for the interval $\myI_{1}$, apart from those contained in the Approximation property~\ref{lem:slope}a,
are already verified. We turn our attention to proving~\ref{lem:slope}a~\eqref{V_trivial_properties}--\eqref{f_g_slope_difference}.

Let $g\in \Lip_1([0,1]^d)$ be given according to~\eqref{eq:g_hyp} and let 
\begin{equation}\label{eq:def:eps1}
\epsone\in(0,\myepszero\lebmi{\myI_1}/4).
\end{equation}
Let 
$g_{\epsone,\myI_1}$ be a $\myepsone$-conical function given by the hypothesis of~\ref{lem:slope}a 
and 
$\None=N_{g,\epsone,\myI_1}$ be the corresponding closed null set, as given by Lemma~\ref{lem:conical:new}.
For brevity, denote $\hat f=f_{\epszero,\myI_0}$ and $\hat g=g_{\epsone,\myI_1}$.

As $\myI_1\subseteq J_0\subseteq \myI_0\setminus\Nzero$,
there is, by Lemma~\ref{lem:conical:new}~\eqref{it:component}, a point $p\in \Nzero\subseteq \myI_0\setminus\myI_1$ and a constant~$\tauzero\in(1-\myepszero,1)$ satisfying the formula $\hat f(x)=f(\gamma(p))+\tauzero\norm{x-\gamma(p)}$ for each $x\in\gamma(\myI_1)\subseteq\gamma(J_0)$. 
Note that $p\notin\myI_1$ allows us,
without loss of generality, to assume that $p$ is to the left of the interval $\myI_1$. 
Let $U_f(\gamma(J_0))$ be the open neighbourhood of $\gamma(J_0)$ guaranteed by Lemma~\ref{lem:conical:new}~\eqref{it:component}, such that $\hat f$ is continuously differentiable on $U_f(\gamma(J_0))$.

Consider all open components $C$ of $\myI_1\setminus \None$ and enumerate them as $C_n=(a_n,b_n)$. We will assume the more complicated case when there are infinitely many such components, so that every natural number $n$ is assigned bijectively to a component $C_{n}$.
Then Lemma~\ref{lem:conical:new}~\eqref{it:component} similarly provides $p_n\in\None\subseteq\myI_1$ and $\tauone\in(1-\myepsone,1)$
with respect to which $\hat g(x)=g(\gamma(p_n))+\tauone\norm{x-\gamma(p_n)}$ for all $x\in\gamma( C_n)$. Let 
$U_g(\gamma(C_{n}))$ 
be the open neighbourhood of $\gamma(C_n)$ such that $\hat g$ is continuously differentiable on $U_g(\gamma(C_{n}))$.

Hence, for each $n\ge1$, 
\begin{equation}\label{eq:Wn}
W_n:=U_f(\gamma(J_0))\cap U_g(\gamma(C_n)) 
\end{equation}
is an open neighbourhood of $\gamma(C_n)$ such that both $\hat f|_{W_n}$ and $\hat g|_{W_n}$ are continuously differentiable, 
so that for every $t\in \myI_1\setminus \None$ the gradients $\nabla \hat f(\gamma(t))$ and $\nabla \hat g(\gamma(t))$ are well-defined. Moreover, for every $n\geq 1$ the functions $\hat f|_{\gamma(C_n)}$ 
and $\hat g|_{\gamma(C_n)}$ satisfy the conditions of Lemma~\ref{lem:conical-increase} for $h$. The only difference will be that for all $n\ge1$, the functions $\hat f\big|_{\gamma(C_n)}$ will use the same $q=p\in \Nzero$ whilst
the functions $\hat g\big|_{\gamma(C_n)}$ may use different $q=p_n\in \None$. Moreover, by our assumption we have that $p<a_n<b_n$ for any $n\ge1$, but we may have $p_n<a_n<b_n$ for some $n\ge1$, and $a_n<b_n<p_n$ for others. 
Let 
\begin{equation}\label{eq:def_GB}
\begin{aligned}	
\good&=\{n\ge1\colon p_n<a_n<b_n\}, 
&\ggood=\bigcup_{n\in \good}(a_n,b_n)
&\text{\qquad (good sets)},\\
\bad&=\{n\ge1\colon a_n<b_n<p_n\},
&\bbad=\bigcup_{n\in \bad}(a_n,b_n)
&\text{\qquad (bad sets)},
\end{aligned}
\end{equation}
and note for future reference that $\good\cup \bad=\mathbb N$ and $\ggood\cup\bbad=\bigcup_{n\ge1}C_n=\myI_1\setminus \None$.

Write $\myI_1=(a,b)$ and denote by $u\in \Sphere^{d-1}$ the vector such that $\gamma$ is $\myepszero$-flat around $\myI_0$ in direction~$u$.
Consider the following sets: 
\begin{align}
\Fset_{0}&=\None\cup\set{a,b},\notag\\
\Fset_1&=\{t\in\overline{\myI}_1\setminus \Fset_0\colon
\abs{\sk{\nabla\hat f(\gamma(t))}{u}-\sk{\nabla\hat g(\gamma(t))}{u}
}\ge\myepszero^{1/3}\},\notag\\
\Fset_2&=\{t\in\overline{\myI}_1\setminus(\Fset_0\cup \Fset_1)\colon
\exists s\in \overline{\myI}_1\setminus\{t\} \text{ such that }
\notag\\
\label{eq:est-slope-neg}
&\hphantom{AAAAA}\abs{
(\hat f(\gamma(s))-\hat f(\gamma(t)))-
(\hat g(\gamma(s))-\hat g(\gamma(t)))
}
\ge
2\myepszero^{1/3}\abs{s-t}
\}.
\end{align}
We now show that the union $\Fset_{0}\cup \Fset_{1}\cup \Fset_{2}$ is closed. As an intermediate step, we first prove that $\Fset_{0}\cup \Fset_{1}$ is closed. To see this, recall that for each 
$n\ge1$
we have that both $\hat{f}$ and $\hat{g}$ are continuously differentiable on $\gamma(C_n)\subseteq U_f(\gamma(J_0))\cap U_g(\gamma(C_n))$. 
Therefore $\Fset_{1}$ intersects each $C_n$ in a relatively closed set, that is, there is a closed set $K_{n}\subseteq \overline{\myI}_1$ such that $\Fset_{1}\cap C_n=K_{n}\cap C_n$. Hence, $\Fset_{1}=\bigcup_{n=1}^{\infty}\left(K_{n}\cap C_n\right)$. Let $(t_{i})_{i=1}^{\infty}$ be a sequence in $\Fset_{1}\cup \Fset_{0}=\Fset_{0}\cup\bigcup_{n=1}^{\infty}\left(K_{n}\cap C_n\right)$ such that $t_{i}\to t\in \overline{\myI}_1$. We need to to verify that $t\in \Fset_{1}\cup \Fset_{0}$. We distinguish two cases: If there exists $n_{0}\in\N$ such that $t\in C_{n_{0}}$ then there is $m_{0}\in \N$ such that $t_{i}\in (\Fset_{1}\cup \Fset_{0})\cap C_{n_{0}}=K_{n_{0}}\cap C_{n_{0}}$ for all $i\geq m_{0}$. Since $K_{n_{0}}$ is closed, we conclude that $t=\lim t_{i}\in K_{n_{0}}$. Hence $t\in K_{n_{0}}\cap C_{n_{0}}\subseteq \Fset_{1}$. In the remaining case we have that $t\in \overline{\myI}_1\setminus \bigcup_{n=1}^{\infty}C_n\subseteq \Fset_{0}$.

Now we proceed to show that $\Fset_{0}\cup \Fset_{1}\cup \Fset_{2}$ is closed. Given that $\Fset_{0}\cup \Fset_{1}$ is closed it suffices to check that the limit of any convergent sequence in $\Fset_{2}$ belongs to $\Fset_{0}\cup \Fset_{1}\cup \Fset_{2}$. Let $(t_{i})_{i=1}^{\infty}$ be a convergent sequence in $\Fset_{2}$ with limit $t\in \overline{\myI}_1$. For each $i\in\N$ we may choose $s_{i}\in \overline{\myI}_1$ witnessing that $t_{i}\in \Fset_{2}$ and, by passing to a subsequence if necessary, we may assume that the sequence $(s_{i})_{i=1}^{\infty}$ converges to a point $s\in \overline{\myI}_1$. 
We distinguish two cases: If $s\neq t$, then taking limits as $i\to\infty$ in~\eqref{eq:est-slope-neg} for $s_{i}$ and $t_{i}$ implies $t\in \Fset_{2}$.
Assume now $s=t\not\in \Fset_0$. Then there exists $n_0\ge1$ such that $s=t\in C_{n_{0}}$ and
$t_i,s_i\in C_{n_{0}}\subseteq \myI_1$  for all $i$ sufficiently large, say $i\ge m_0$. 
Recall that
$\gamma$ is $\myepszero$-flat around $\myI_0\supseteq \myI_1$ in direction~$u$. Thus,
\begin{align*}
&\abs{
\frac{\hat f(\gamma(s_i))-\hat f(\gamma(t_i))}{s_i-t_i}-
\frac{\hat f(\gamma(t_i)+u(s_i-t_i))-\hat f(\gamma(t_i))}{s_i-t_i}
}\\
\le
&\lip(\hat f)
\abs{
\frac{\gamma(s_i)-\left(\gamma(t_i)+u(s_i-t_i)\right)}{s_i-t_i}
}
\le
\myepszero 
\end{align*}
and, similarly,
\begin{align*}
&\abs{
\frac{\hat g(\gamma(s_i))-\hat g(\gamma(t_i))}{s_i-t_i}-
\frac{\hat g(\gamma(t_i)+u(s_i-t_i))-\hat g(\gamma(t_i))}{s_i-t_i}
}
\le
\myepszero.
\end{align*}
Hence from~\eqref{eq:est-slope-neg} we get, for all $i\ge m_0$,
\begin{align}\label{eq:slope_gap_u'}
\abs{
\frac{(\hat f(\gamma(t_i)+u(s_i-t_i))-\hat f(\gamma(t_i)))}{s_i-t_i}-
\frac{(\hat g(\gamma(t_i)+u(s_i-t_i)))-\hat g(\gamma(t_i)))}{s_i-t_i}
}
\ge2\myepszero^{1/3}-2\myepszero.
\end{align}
For each $i\in\N$ we let
\begin{equation*}
\nu_{i}(\hat f):=\hat f(\gamma(t_i)+u(s_i-t_i))-\hat f(\gamma(t_i))-(s_{i}-t_{i})\sk{\nabla\hat f(\gamma(t_{i}))}{u}
\end{equation*}
and define $\nu_{i}(\hat g)$ similarly. Note that $\displaystyle\lim_{i\to\infty}\frac{{\nu_{i}(h)}}{{s_{i}-t_{i}}}=0$ for $h=\hat f,\hat g$. 
To see this, denote
\[
D_h(r,\myr)=\begin{cases}
\frac{h(\gamma(r)+\myr u)-h(\gamma(r))}{\myr}
-\sk{\nabla h(\gamma(r))}{u},&\text{if }\myr\ne0;\\
0,&\text{if }\myr=0,
\end{cases}
\]
for $r\in C_{n_0}$ and $\myr\in\R$, where, for the purposes of this formula, we extend the functions $h=\hat f, \hat g$ arbitrarily outside of $[0,1]^{d}$. 
We now show that the two functions $D_{\hat f},D_{\hat g}\colon C_{n_0}\times\R\to\R$ are continuous at the points $(r,0)$. 
Let $r_0\in C_{n_0}$; choose positive
$\delta_{0}$ and $\myr_{0}$ small enough so that $I_{2\delta_{0}}(r_0)\subseteq C_{n_{0}}$ and $B(\gamma(I_{\delta_{0}}(r_0)),\myr_{0})\subseteq W_{n_0}$, where  $W_{n_0}\supseteq\gamma(C_{n_0})$ is the open set defined by~\eqref{eq:Wn} on which both $\hat f$ and $\hat g$ are continuously differentiable.
Then, given $r\in I_{\delta_{0}}(r_0)$ and $\abs{\myr}<\myr_0$, we have that the segment $[\gamma(r),\gamma(r)+\myr u]$ is contained in $W_{n_0}$. Therefore, $\nabla h$ is well-defined (and continuous) along this segment and we may apply the Mean Value Theorem to write
\begin{equation*}
D_h(r,\myr)=\sk{\nabla h(\gamma(r)+\eta_\myr\myr u )}{u }-\sk{\nabla h(\gamma(r))}{u }\qquad \text{for some $\eta_\myr\in(0,1)$.} 
\end{equation*}
Since $r\in I_{\delta_{0}}(r_0)$ and $\abs{\myr}<\myr_{0}$ were arbitrary, we may let $r\to r_0$ and $\myr\to 0$ in the formula above. Using the continuity of $\nabla h$ in $W_{n_0}$, we get $\lim_{r\to r_0,\myr\to0} D_h(r,\myr)=D_h(r_0,0)=0$, verifying the continuity of $D_h$ at $(r_0,0)$ and, in particular, $\frac{\nu_i(\hat f)}{s_i-t_i}=D_{\hat f}(t_i,s_i-t_i)\to0$ and $\frac{\nu_i(\hat g)}{s_i-t_i}=D_{\hat g}(t_i,s_i-t_i)\to0$. 

After substituting $\nu_{i}(\hat f)$ and $\nu_{i}(\hat g)$ into~\eqref{eq:slope_gap_u'} and choosing $m_1\ge m_0$ large enough so that $\abs{\frac{\nu_i(h)}{s_i-t_i}}<\myepszero/2$ for both $h=\hat f,\hat g$ and $i\ge m_1$, we derive
\begin{equation*}
\abs{\sk{\nabla\hat f(\gamma(t_{i}))}{u }-\sk{\nabla\hat g(\gamma(t_{i}))}{u }}
\geq 
2\myepszero^{1/3}-2\myepszero
-\frac{|{\nu_{i}(\hat f)}|}{\abs{s_{i}-t_{i}}}
-\frac{|{\nu_{i}(\hat g)}|}{\abs{s_{i}-t_{i}}}
\ge 
2\myepszero^{1/3}-3\myepszero>\myepszero^{1/3}
\end{equation*}
for all $i\geq m_{1}$. Letting $i\to\infty$ in the above and using that both $\hat f$ and $\hat g$ are continuously differentiable on $\gamma(C_{n_0})$, by Lemma~\ref{lem:conical:new}~\eqref{it:component}, we prove that
$t\in \Fset_1$.
This finishes the proof that $\Fset_0\cup \Fset_1\cup \Fset_2$ is closed.

We will now find an upper bound for the Lebesgue measure of $\Fset_0\cup \Fset_1\cup \Fset_2\subseteq \overline{\myI}_1$, showing that it is much smaller than $\lebmi{\myI_1}$; see~\eqref{eq:small_meas} for the precise bound. 
It is clear that $\lebm{\Fset_0}=0$; let us proceed to get estimates of the Lebesgue measure of $\Fset_1$ and $\Fset_2$.
Recall the definition~\eqref{eq:def_GB} of the sets $\good$ and $\bad$ and the notation introduced in  Lemma~\ref{lem:conical-increase}. We assert that
\begin{equation}\label{eq:change}
\begin{aligned}
\abs{(\hat f\circ\gamma-\hat g\circ\gamma)\big|_{a_n}^{b_n}}
&\le7\myepszero(b_n-a_n),
&\text{if }
n\in \good,
\\ 
(\hat f\circ\gamma-\hat g\circ\gamma)\big|_{a_n}^{b_n}&\ge(b_n-a_n),
&\text{if }
n\in \bad.
\end{aligned}
\end{equation}
Indeed, recall that $C_n=(a_n,b_n)$ is an open component of $\myI_1\setminus \None\subseteq R_{0}\setminus \Nzero$, $\gamma$ is $\myepszero$-flat around $\myI_0$, $p,p_n\in \myI_0\setminus C_n$ and that both $\hat f$ and $\hat g$ have the special form of Lemma~\ref{lem:conical:new}~\eqref{it:component} on $\gamma(C_n)$ with respect to the points $p$ and $p_{n}$ and scalars $\tauzero\in (1-\myepszero,1)$ and $\tauone\in (1-\myepsone,1)$ respectively.
Therefore, we may apply Lemma~\ref{lem:conical-increase} 
to get
\begin{align*}
\abs{(\hat f\circ\gamma)\big|_{a_n}^{b_n}-\tauzero(b_n-a_n)}&\le3\myepszero\tauzero(b_n-a_n)\le3\myepszero(b_n-a_n), 
&n\in\N,\\
\abs{(\hat g\circ\gamma)\big|_{a_n}^{b_n}-\tauone(b_n-a_n)}&\le3\myepszero\tauone(b_n-a_n)\le 3\myepszero(b_n-a_n), 
&n\in \good,\\
\abs{(\hat g\circ\gamma)\big|_{a_n}^{b_n}-\tauone(a_n-b_n)}&\le3\myepszero\tauone(b_n-a_n)\le3\myepszero(b_n-a_n), 
&n\in \bad.
\end{align*}
This immediately implies the first inequality of~\eqref{eq:change}:
As both $\hat f$ and $\hat g$ are $\myepszero$-conical,  we have
$\abs{\tauone-\tauzero}\le \myepszero$ and $\tauone+\tauzero\ge2-2\myepszero$. Hence for any $n\in\good$
\[
\abs{(\hat f\circ\gamma-\hat g\circ\gamma)\big|_{a_n}^{b_n}}
\le\abs{\tauone-\tauzero}(b_n-a_n)+6\myepszero(b_n-a_n)
\le
7\myepszero(b_n-a_n)
.\] 
To see the second inequality of~\eqref{eq:change}, we note that if $n\in \bad$, 
then 
\[\abs{(\hat f\circ\gamma-\hat g\circ\gamma)\big|_{a_n}^{b_n}-(\tauzero+\tauone)(b_n-a_n)}
\le
6\myepszero(b_n-a_n).
\] 
Hence $(\hat f\circ\gamma-\hat g\circ\gamma)\big|_{a_n}^{b_n}\ge (\tauzero+\tauone-6\myepszero)(b_n-a_n)>b_n-a_n$, using $\tauone+\tauzero-6\myepszero\ge2-8\myepszero$ and $\myepszero<1/10$.

Using Lemma~\ref{lem:Lip}, $\bad\cup \good=\mathbb N$ and~\eqref{eq:change} we deduce
\[
(\hat f\circ\gamma-\hat g\circ\gamma)\big|_{a}^{b}
=\sum_{n\in \good}(\hat f\circ\gamma-\hat g\circ\gamma)\big|_{a_n}^{b_n}
+
\sum_{n\in \bad}(\hat f\circ\gamma-\hat g\circ\gamma)\big|_{a_n}^{b_n}
\ge 
\sum_{n\in \good}(\hat f\circ\gamma-\hat g\circ\gamma)\big|_{a_n}^{b_n}
+
\lebm{\bbad},
\]
where $\bbad$ is defined along with $\ggood$ in~\eqref{eq:def_GB}. 
Note that the absolute value of the first summand can be estimated
using~\eqref{eq:change} as 
\[
\abs{
\sum_{n\in \good}(\hat f\circ\gamma-\hat g\circ\gamma)\big|_{a_n}^{b_n}}
\le
7\myepszero\sum_{n\in \good}
(b_n-a_n)
=
7\myepszero\lebm{\ggood}
\le
7\myepszero\lebmi{\myI_1}.
\]
In addition, using~$\hat g=g_{\epsone,\myI_1}$, Lemma~\ref{lem:conical:new}~\eqref{it:approx_lip}, \eqref{eq:g_hyp}
and~\eqref{eq:def:eps1}, we get  
\begin{equation}\label{eq:normf-g}
\inorm{\hat f -\hat g}
\leq \inorm{\hat g-g}+\inorm{g-\hat f}
<\epsone+\myepszero\lebmi{\myI_1}/4
\leq \myepszero\lebmi{\myI_1}/2.
\end{equation}
Hence $\abs{(\hat f\circ\gamma-\hat g\circ\gamma)\big|_{a}^{b}}\le\myepszero\lebmi{\myI_1}$, and we conclude that
\begin{equation}\label{eq:measure-bad}
\lebm{\bbad}
\le
\myepszero\lebmi{\myI_1}+7\myepszero\lebmi{\myI_1}
=8\myepszero\lebmi{\myI_1}.
\end{equation}

We now show that for $t\in C_n$ with $n\in \good$ the gradients $\nabla\hat f(\gamma(t))$ and $\nabla\hat g(\gamma(t))$ differ in norm by less than the threshold $\myepszero^{1/3}$ defining the set $X_{1}$; see~\eqref{eq:good}. This will imply 
$\Fset_1\subseteq\bbad\cup \None\cup\{a,b\}$ so that
\begin{equation}\label{eq:f1}
\lebm{\Fset_1}
\le 
\lebm{\bbad}
\le 
8\myepszero\lebmi{\myI_1}
<
\myepszero^{1/3}\lebmi{\myI_1}
.
\end{equation}
Indeed, to estimate the norm of the difference between $\nabla\hat f(\gamma(t))$ and $\nabla\hat g(\gamma(t))$ we use~\eqref{eq:grad_on_UJ} of Lemma~\ref{lem:conical:new}~\eqref{it:component}, to write, for $x=\gamma(t)\in\gamma(C_n)$ and $p'=p_n$
\begin{equation}
\label{eq:good-proof}
\begin{aligned}
\norm{\nabla\hat g(\gamma(t))-\nabla\hat f(\gamma(t))}
&=
\norm{\tauone\frac{x-\gamma(p')}{\norm{x-\gamma(p')}}-
\tauzero\frac{x-\gamma(p)}{\norm{x-\gamma(p)}}
}
\\
&\le
\abs{\tauone-\tauzero}
+\tauzero\norm{
\frac{x-\gamma(p')}{\norm{x-\gamma(p')}}-
\frac{x-\gamma(p)}{\norm{x-\gamma(p)}}
}\\
&\le
\myepszero+\norm{
\frac{x-\gamma(p')}{\norm{x-\gamma(p')}}-
\frac{x-\gamma(p)}{\norm{x-\gamma(p)}}
}.
\end{aligned}
\end{equation}
Let 
\begin{equation}\label{eq:v1v2}
v_1=x-\gamma(p')=\gamma(t)-\gamma(p')
\quad\text{and}\quad
v_2=x-\gamma(p)=\gamma(t)-\gamma(p).
\end{equation}
Note that as $n\in \good$ and $t\in(a_n,b_n)$, we have $t>p'$. Note also that $p<p'$ as $p$ is to the left of $\myI_1$ and $p'\in \None=N_{g,\epsone,\myI_1}\subseteq \myI_1$. 
As $\gamma$ is $\myepszero$-flat in direction~$u$ around $\myI_0$, we get, using the notation of Remark~\ref{rem:flat_equiv}, for $p<p'<t$,
\begin{align}
&v_1=(t-p')(u +\myepszero w_{t,p'}); 
\text{ hence }
\norm{v_1}=(t-p')q_{t,p'}
\text{ with }q_{t,p'}\in(1-\myepszero,1+\myepszero),\label{al-3}
\\
&v_2=(t-p)(u+\myepszero w_{t,p});
\text{ hence }
\norm{v_2}=(t-p)q_{t,p}
\text{ with }q_{t,p}\in(1-\myepszero,1+\myepszero).\label{al-4}
\end{align}
Therefore, we have
\[
\frac{v_1}{\norm{v_1}}=\frac{u +\myepszero w_{t,p'}}{q_{t,p'}},
\quad
\frac{v_2}{\norm{v_2}}=\frac{u+\myepszero w_{t,p}}{q_{t,p}}.
\]
Note that both $\frac1{q_{t,p}}$ and $\frac1{q_{t,p'}}$ are at least $\frac{1}{1+\myepszero}\ge1-\myepszero$ and are at most $\frac{1}{1-\myepszero}\le1+2\myepszero\le2$, as $\myepszero<1/2$. 
Hence $\abs{\frac1{q_{t,p'}}-\frac1{q_{t,p}}}\le3\myepszero$ and their sum is at most $4$, so that 
\begin{align*}
\norm{\frac{v_1}{\norm{v_1}}-\frac{v_2}{\norm{v_2}}}
&\le
\norm{u \left(\frac1{q_{t,p'}}-\frac1{q_{t,p}}\right)}+
\myepszero\left(\frac1{q_{t,p'}}+\frac1{q_{t,p}}\right)\\
&\le
\abs{\frac1{q_{t,p'}}-\frac1{q_{t,p}}}
+4\myepszero
\le
7\myepszero.
\end{align*}
Together with~\eqref{eq:good-proof}, this gives
\begin{equation}\label{eq:good}
\norm{\nabla\hat f(\gamma(t))-\nabla\hat g(\gamma(t))}
\le
\myepszero+\norm{\frac{v_1}{\norm{v_1}}-\frac{v_2}{\norm{v_2}}}
\le
8\myepszero<\myepszero^{1/3}.
\end{equation}
Having verified the bound~\eqref{eq:f1} on the measure of $\Fset_{1}$, we turn our attention to $\Fset_{2}$. Let $\barf(t):=(\hat f\circ\gamma)(t)$ and $\barg(t):=(\hat g\circ\gamma)(t)$. Then~\eqref{eq:good} and $\lip(\gamma)\leq 1$ imply
\begin{equation}\label{eq:good-der}
\int_{\ggood}\abs{\barf'(s)-\barg'(s)}\,ds
\le
8\myepszero\lebm{\ggood}
\le
8\myepszero(b-a).
\end{equation}
Consider the following variant of the
uncentred Hardy-Littlewood maximal function $\mathbb M\phi$, see~\cite{analysis-book}, defined for Lebesgue measurable $\phi\colon \R\to\R$ such that $\phi\in L^1_\text{loc}(\R)$:
\[
\mathbb M\phi(t)=
\sup_{s\in\R\setminus\{t\}}\frac1{\abs{s-t}}\int_{[s,t]}\abs{\phi(r)}\,dr.
\]
We will use that for any $q>1$ the maximal function satisfies the following inequality which follows from~\cite[Theorem~21.76]{analysis-book}:
\begin{equation}\label{eq:max-op}
\int_{\R}(\mathbb M\phi(t))^q\,dt
\le
2\left(\frac{q}{q-1}\right)^q\int_{\R}\abs{\phi(t)}^q\,dt.
\end{equation}
We will use this inequality with $q=2$
and
$\phi\in L^{1}_{\text{loc}}(\R)$ defined by $\phi:=(\barf'-\barg')\chi_{\myI_1}$,
which trivially satisfies
\begin{equation}\label{eq:bound2}
\abs{\phi(r)}\le2 
\text{ for almost all }
r\in\R.
\end{equation}
Let $t\in\Fset_2\subseteq\overline{\myI}_1$ and choose $s$ according to~\eqref{eq:est-slope-neg}.
Then, $[s,t]\subseteq\overline{\myI}_1$, so that the equality
$\phi\big|_{[s,t]}=(\barf'-\barg')\big|_{[s,t]}$  holds in $L^1$, implying 
\begin{multline*}
\mathbb M\phi(t)\ge
\frac1{\abs{s-t}}\int_{[s,t]}\abs{\phi(r)}\,dr
\ge
\frac1{\abs{s-t}}\abs{\int_{[s,t]}\phi(r)\,dr}
\\=
\frac1{\abs{s-t}}\abs{\int_{[s,t]}(\barf'(r)-\barg'(r))\,dr}
=
\frac{\abs{(\barf-\barg)\big|_s^t}}{\abs{s-t}}
\ge
2\myepszero^{1/3},
\end{multline*}
where the last inequality comes from~\eqref{eq:est-slope-neg} for $s$ and $t$.
Since $t\in X_{2}$ was arbitrary, we use~\eqref{eq:max-op} with $q=2$ to derive
\begin{equation}
\label{eq:F2_measure_1}
\begin{split}
\myepszero^{2/3}\lebm{\Fset_2}
&\le\tfrac1{4}
\int_{\R}
(\mathbb M\phi(s))^2\,ds
\le
2\int_{\R}
\abs{\phi(s)}^2\,ds
\le
4\int_{\R}
\abs{\phi(s)}\,ds 
=
4\int_{\myI_1}
\abs{\phi(s)}\,ds 
\\
&=
4\int_{\bbad}
\abs{\phi(s)}\,ds 
+
4\int_{\ggood}
\abs{\phi(s)}\,ds 
\le
8\lebm{\bbad}
+
32\myepszero(b-a)
\le 96\myepszero(b-a)
. 
\end{split}
\end{equation}
Here we also used~\eqref{eq:bound2} for all $s\in\R$, followed by~\eqref{eq:good-der} and~\eqref{eq:measure-bad}.
Hence
\[
\lebm{\Fset_2}\le 
96\myepszero^{1/3}\lebmi{\myI_1}.
\]
Together with~\eqref{eq:f1} this implies
\begin{equation}\label{eq:small_meas}
\lebm{\Fset_0\cup \Fset_1\cup \Fset_2}
<
100\myepszero^{1/3}\lebmi{\myI_1}
.
\end{equation}
Recall that $\myI_1=I_\Delta(t_0)=(t_0-\Delta,t_0+\Delta)$ is a $\myepszero$-density interval for $\Fonedim$, and that $\myepszero<1/\myconst^3$, which implies $1-\myepszero>\mycons\myepszero^{1/3}$. 
Then for $\myI_1'=I_{\Delta/2}(t_{0})=(t_{0}-\Delta/2,t_{0}+\Delta/2)$ it holds that
the open set $V'=\myI_1'\setminus(\Fset_0\cup \Fset_1\cup \Fset_2)$ is of measure bounded below by $\lebmi{\myI_1'}-100\myepszero^{1/3}\lebmi{\myI_1}$ whereas, $\lebm{\myI_1'\cap \Fonedim}
\ge (1-\myepszero)\lebmi{\myI_1'}
=\tfrac12(1-\myepszero)\lebmi{\myI_1}
> 100\myepszero^{1/3}\lebmi{\myI_1}$.
This implies that
 $V'\cap \Fonedim\ne\emptyset$. Choose an open interval $V$ such that $V\subseteq V'$ and $V\cap \Fonedim\ne\emptyset$. Using that $\Fonedim$ has every portion of positive measure, and $\None\subseteq \myI_{1}$ is a closed set of measure zero, we deduce that there is an open interval $V\subseteq V'\setminus \None$ with $V\cap \Fonedim\neq \emptyset$. Part~\eqref{g_cts_diff_on_V} of the Approximation property~\ref{lem:slope}a now follows from $V\subseteq \myI_{1}\setminus \None$ and Lemma~\ref{lem:conical:new}~\eqref{it:component}.

We also have $\overline V\subseteq \overline {\myI_1'}\subseteq\myI_1$ and
$\lebmi{V}\le\lebmi{\myI_1'}=\lebmi{\myI_1}/2$. Now all assertions of part~\eqref{V_trivial_properties} of the Approximation property~\ref{lem:slope}a  are established.
 
To check
its remaining part~\eqref{f_g_slope_difference} and~\eqref{eq:est-slope}, we can immediately see that 
for any $t\in V\subseteq \myI_1\setminus(\Fset_0\cup\Fset_1\cup\Fset_2)$ and $s\in \myI_1$ we have~\eqref{eq:est-slope}; see~\eqref{eq:est-slope-neg} and the definition of $\Fset_2$. If $t\in V$ and $s\in[0,1]\setminus \myI_1$, then $t\in V\subseteq\myI_1'$ implies $\abs{s-t}\ge\lebmi{\myI_1}/4$. Therefore, using $\inorm{\hat f-\hat g}\le\myepszero\lebmi{\myI_1}/2$ from~\eqref{eq:normf-g}, we get
\[
\abs{
(\hat f(\gamma(s))-\hat f(\gamma(t)))-
(\hat g(\gamma(s))-\hat g(\gamma(t)))
}
\le
\myepszero\lebmi{\myI_1}
\le
4\myepszero\abs{s-t}
<2\myepszero^{1/3}\abs{s-t}.
\]
This proves~\eqref{eq:est-slope} for all $t\in V$ and $s\in[0,1]$, and thus 
part~\eqref{f_g_slope_difference} of the Approximation property~\ref{lem:slope}a.
\end{proof}

We will prove Theorem~\ref{thm:bm} using the Banach-Mazur game. We presently state a short description of the Banach-Mazur game; for more details see~\cite{kechris2012classical}.
\begin{defn}\label{def:BM}
Let $X$ be a non-empty topological space and $S\subseteq X$ its subset which we refer to as a target set. We define the Banach-Mazur game $G_{BM}(S)$ on $X$ as follows. Players~I and~II choose alternatively non-empty open sets $\piset_i$ (choices of Player~I) and $\piiset_i$ (choices of Player~II), such that $\piset_k\supseteq \piiset_k\supseteq \piset_{k+1}$ for each $k\ge1$, and Player~II is declared the winner if $\bigcap \piiset_k\subseteq S$. 
\end{defn}

The main result about the Banach-Mazur game which will be useful to us is the following theorem; see \cite[Theorem 8.33]{kechris2012classical}.

\begin{theorem}\label{thm:bm_standard}
Let $X$ be a non-empty topological space. Then $S\subseteq X$ is residual in $X$ if and only if Player~II has a winning strategy in  $G_{BM}(S)$.
\end{theorem}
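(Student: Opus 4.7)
The approach is to prove both implications directly. The easier direction is that residual sets admit a winning strategy for Player~II: if $S$ is residual then $X \setminus S \subseteq \bigcup_{n \geq 1} F_n$ with each $F_n$ closed and nowhere dense (we may replace any nowhere dense set by its closure). Then Player~II's strategy is, at round $n$, to respond to Player~I's choice $G_n$ by selecting any non-empty open $H_n \subseteq G_n \setminus F_n$ (and contained in the previous $H_{n-1}$, which is automatic since $G_n \subseteq H_{n-1}$); such an $H_n$ exists because $G_n \setminus F_n$ is non-empty open, using that $F_n$ is closed with empty interior. The resulting intersection $\bigcap_n H_n$ then avoids every $F_n$, so $\bigcap_n H_n \subseteq S$.

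For the converse, suppose $\sigma$ is a winning strategy for Player~II. I will construct a sequence $(U_n)_{n \geq 0}$ of dense open subsets of $X$ with $U_{n+1} \subseteq U_n$ and $\bigcap_n U_n \subseteq S$; this immediately gives that $X \setminus S$ is meager. The construction is by induction on $n$, building alongside the $U_n$ a family $\mathcal{C}_n$ of $\sigma$-compatible finite positions $p = (G_1, H_1, \ldots, G_n, H_n)$ whose final Player~II moves $\{H_n : p \in \mathcal{C}_n\}$ are pairwise disjoint with union $U_n$. Start with $\mathcal{C}_0 = \{\varnothing\}$ and $U_0 = X$. Given $\mathcal{C}_n$ and $U_n$, for each $p \in \mathcal{C}_n$ use Zorn's Lemma to pick a maximal family $\mathcal{E}(p)$ of one-step $\sigma$-compatible extensions $p' = (G_1, H_1, \ldots, G_{n+1}, H_{n+1})$ of $p$, with $G_{n+1} \subseteq H_n$ and $H_{n+1} = \sigma(G_1, \ldots, G_{n+1})$, such that the resulting $H_{n+1}$'s are pairwise disjoint. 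Set $\mathcal{C}_{n+1} = \bigcup_{p \in \mathcal{C}_n} \mathcal{E}(p)$ and $U_{n+1} = \bigcup_{p' \in \mathcal{C}_{n+1}} H_{n+1}$.

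The key verifications are twofold. First, for each $p \in \mathcal{C}_n$, maximality forces $\bigcup_{p' \in \mathcal{E}(p)} H_{n+1}$ to be dense in $H_n$: otherwise a non-empty open $V \subseteq H_n$ disjoint from all chosen $H_{n+1}$ would yield, by setting $G_{n+1} = V$, a new compatible extension with $\sigma(G_1, \ldots, V) \subseteq V$ disjoint from all already selected sets, contradicting maximality. Combining these over $p \in \mathcal{C}_n$, $U_{n+1}$ is dense in $U_n$, hence by a short argument (any non-empty open $V \subseteq X$ meets $U_n$ in a non-empty open set, which in turn meets $U_{n+1}$) it is dense open in $X$. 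Second, given $x \in \bigcap_n U_n$, for each $n$ there is a unique $p_n \in \mathcal{C}_n$ (uniqueness by disjointness of final moves at level $n$) with $x \in H_n$; the nested construction ensures $p_{n+1}$ extends $p_n$, so the $p_n$ cohere into a single $\sigma$-compatible infinite play $(G_1, H_1, G_2, H_2, \ldots)$ with $x \in \bigcap_n H_n$. Since $\sigma$ is winning, $\bigcap_n H_n \subseteq S$, hence $x \in S$.

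The main obstacle is the coherence step in the hard direction: without the pairwise disjointness of the $H_n$'s across $\mathcal{C}_n$, a point in $\bigcap_n U_n$ would witness many partial plays and might yield no single infinite $\sigma$-compatible play with $x \in \bigcap_n H_n$. Enforcing disjointness by taking maximal antichains at each level (via Zorn's Lemma) is precisely what converts "$x$ lies in some final $H_n$ at every level $n$" into "$x$ lies in the final sets of one coherent branch", which is what the winning condition of $\sigma$ can then be applied to.
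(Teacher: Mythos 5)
Your proposal is correct: the easy direction (residual $\Rightarrow$ Player~II wins by dodging a countable family of closed nowhere dense sets) and the hard direction (a winning strategy $\sigma$ yields dense open sets $U_{n}$ with $\bigcap_{n}U_{n}\subseteq S$, built from maximal families of $\sigma$-compatible positions with pairwise disjoint final moves, whose disjointness makes the partial plays through a given point of $\bigcap_{n}U_{n}$ cohere into one infinite $\sigma$-compatible run) are exactly the standard Banach--Mazur--Oxtoby argument. The paper does not prove this statement itself but simply cites \cite[Theorem~8.33]{kechris2012classical}, and your proof is essentially the argument given in that reference, so it matches the intended route.
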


We may immediately observe that in the case of metric spaces, with topology defined by the metric, we may check the residuality of $S$ in a slightly easier way.

\begin{theorem}\label{thm:bm_balls}
Let $X$ be a non-empty metric space. 
If Player~II has a winning strategy in  $G_{BM,\text{balls}}(S)$, the Banach-Mazur game with the restriction that both players may supply only non-empty, open balls as their choices of open sets, then $S$ is residual in $X$.
\end{theorem}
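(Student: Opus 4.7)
The plan is to reduce to Theorem~\ref{thm:bm_standard} by converting a winning strategy for Player~II in the restricted ball game $G_{BM,\text{balls}}(S)$ into a winning strategy for Player~II in the unrestricted Banach-Mazur game $G_{BM}(S)$. The key observation, available because $X$ is a metric space, is that every non-empty open subset of $X$ contains a non-empty open ball.

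Given a winning strategy $\sigma$ for Player~II in $G_{BM,\text{balls}}(S)$, I would define a strategy $\sigma^{*}$ for Player~II in $G_{BM}(S)$ as follows. Each time Player~I plays a non-empty open set $\piset_i$ in the general game (with $\piset_1$ arbitrary and $\piset_i \subseteq \piiset_{i-1}$ for $i \geq 2$), Player~II using $\sigma^{*}$ first chooses an auxiliary non-empty open ball $B_i \subseteq \piset_i$ (possible since $\piset_i$ is non-empty and open in a metric space), and then sets its response to be the non-empty open ball
\[
\piiset_i := \sigma(B_1, \piiset_1, B_2, \piiset_2, \ldots, B_i).
\]
By construction $\piiset_i \subseteq B_i \subseteq \piset_i$, so $\sigma^{*}$ is a legal strategy in the general game.

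Next I would verify two claims. First, the auxiliary sequence $(B_1, \piiset_1, B_2, \piiset_2, \ldots)$ is a legal play of the restricted game in which Player~II follows $\sigma$: indeed each $B_{i+1}$ is a non-empty open ball with $B_{i+1} \subseteq \piset_{i+1} \subseteq \piiset_i$, and each $\piiset_i$ is a non-empty open ball output by $\sigma$. Second, since $\sigma$ is winning, $\bigcap_i \piiset_i \subseteq S$. Because in the general game $\piset_{i+1} \subseteq \piiset_i$, the intersection of Player~II's responses coincides with $\bigcap_i \piiset_i$, and hence is contained in $S$. Thus $\sigma^{*}$ is a winning strategy for Player~II in $G_{BM}(S)$, and Theorem~\ref{thm:bm_standard} yields that $S$ is residual in $X$.

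There is no substantial obstacle here: the argument rests entirely on the metric-space property that non-empty open sets contain non-empty open balls, permitting the simulation of the restricted game inside the general one. The only care required is routine bookkeeping, namely checking that the auxiliary ball sequence is itself a legal restricted-game play so that the winning condition of $\sigma$ transfers correctly.
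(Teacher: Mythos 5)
Your proposal is correct and follows essentially the same argument as the paper: inscribe a non-empty open ball in each of Player~I's open sets, feed it to the restricted-game strategy, and return that strategy's ball as the response, then invoke Theorem~\ref{thm:bm_standard}. Your additional check that the auxiliary ball sequence is itself a legal play of the restricted game is the same routine verification the paper carries out implicitly.
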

\begin{proof}
We show that Player~II has a winning strategy in $G_{BM}(S)$.
Assume Player~I supplies non-empty open sets $\piset_k$. For each $k\ge1$, Player~II picks $\phi_k\in \piset_k$ and finds $r_k>0$ such that $B(\phi_k,r_k)\subseteq \piset_k$, then gives a response $\piiset_k=B(\psi_k,\myd_k)$, via their strategy in $G_{BM,\text{balls}}$ to $B(\phi_k,r_k)$. Note that $\piiset_k$ is an open set and $\piiset_k\subseteq \piset_k$, so the sequence of open sets $(\piset_k,\piiset_k)$ satisfies Definition~\ref{def:BM}. Moreover, since Player~II's winning strategy in $G_{BM,\text{balls}}(S)$ guarantees that $\bigcap \piiset_k\subseteq S$, it also provides a winning strategy for Player~II in $G_{BM}(S)$. By Theorem~\ref{thm:bm_standard}, this implies that $S$ is residual in~$X$.
\end{proof}

Another simple fact we will need is the following lemma, in which $C^{1}(H)$ denotes the set of continuous functions $\phi\colon [0,1]^{d}\to\R$ for which $\phi|_{\interior(H)}$ is $C^{1}$.
\begin{lemma}\label{lem:lip1}
Let $f\colon [0,1]^d\to\R$ be a Lipschitz function with $\Lip(f)\le1$. Then for every $\eps>0$ there exists $g\colon [0,1]^d\to\R$ such that $\Lip(g)<1$ and $\inorm{f-g}<\eps$.
If moreover $f\in C^1(H)$ for some $H\subseteq[0,1]^d$, then the function $g$ may also be chosen to be in $C^1(H)$.
\end{lemma}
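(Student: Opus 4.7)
\medskip

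\noindent\textbf{Proof plan.} The statement is essentially a scaling argument, so I would avoid any construction or convolution and just shrink the function towards zero by a small factor. Specifically, I would set $g := (1-\delta)f$ for a suitable small $\delta \in (0,1)$ to be chosen below.

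The verification proceeds as follows. First, $\Lip(g) = (1-\delta)\Lip(f) \le 1-\delta < 1$, so the Lipschitz requirement is automatic for any $\delta\in (0,1)$. Second, since $[0,1]^d$ is compact and $f$ is continuous, $M := \inorm{f}$ is finite. We have
\begin{equation*}
\inorm{f-g} = \delta\inorm{f} = \delta M,
\end{equation*}
so it is enough to choose $\delta\in (0,1)$ with $\delta M < \eps$ (for instance $\delta < \eps/(M+1)$), which gives $\inorm{f-g} < \eps$. Finally, for the additional assertion, note that if $f\in C^1(H)$ for some $H\subseteq [0,1]^d$ then so is $(1-\delta)f$ for any $\delta$, since multiplication by a scalar preserves both continuity and continuous differentiability on $\interior(H)$.

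The only potential obstacle is trivial, namely the degenerate case $M = 0$, i.e.\ $f \equiv 0$ on $[0,1]^d$; in that case one simply takes $g = 0$ (or equivalently allows $\delta$ to be arbitrary), and both conclusions hold vacuously. No further construction is required.
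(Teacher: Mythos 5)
Your proof is correct and is essentially the paper's own argument: the paper also takes $g=rf$ for a scalar $r<1$ chosen close enough to $1$ that $\inorm{f-g}=(1-r)\inorm{f}<\eps$, and the $C^1(H)$ preservation under scalar multiplication is immediate. Your explicit handling of the degenerate case $f\equiv 0$ is a minor tidy-up of a detail the paper leaves implicit.
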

\begin{proof}
If $\inorm{f}\ne0$, let $g=rf$, with $r\in\left(\max(0,1-\frac{\eps}{\norm{f}_\infty}),1\right)$. 
\end{proof}
We are now ready to give a proof of Theorem~\ref{thm:bm}, the statement of which we repeat here for the reader's convenience.
\bm*
\begin{proof}
We prove Theorem~\ref{thm:bm} by describing a winning strategy for Player~II in the Banach-Mazur game $G_{BM,\text{balls}}(S)$ in $\lip_{1}([0,1]^{d})$, in which Player~I's choices are balls $B(\phi_k,r_k)$ and Player~II's choices are balls $B(\psi_k,\myd_k)$.
	
By Lemma~\ref{lem:cond-C} we may assume that $\gamma$ is affine modulo $\Fonedim$. Let $(0,1)=U_0\supseteq U_1\supseteq U_2\supseteq\dots$ be a sequence of open sets, such that $U_{n}\cap \Fonedim$ is dense in $\Fonedim$ for each $n\geq 1$ and $(\bigcap_{n=0}^{\infty} U_n)\cap \Fonedim\subseteq E$. Fix a strictly decreasing sequence of positive numbers $\myeta_k$ such that $\myeta_1<1/\myconst^3$ and  $\sum_{k=1}^{\infty}\myeta_k^{1/3}$ converges; for example, let $\myeta_k=2^{-3k}/\myconst^3$. For the most of the proof, we will only use that $\myeta_k\downarrow0$; the convergence property of the series will be used only at the very end of the proof; see~\eqref{eq:alpha}. In addition to defining $\psi_k\in\Lip_1([0,1]^{d})$ and $\myd_k>0$ for each $k\ge1$, Player~II also 
defines the following additional objects:
sequences of positive numbers $\myseq_k$,
unit vectors $u_k$, 
open intervals $I_k,J_k\subseteq[0,1]$, and
functions $\phi_k^{(1)}\in\Lip_1([0,1]^{d})$.
These objects have the following properties, for each $k\ge1$: 
\begin{enumerate}[(A)]
\item \label{it:first}
\label{it:phi}
$\Lip(\phi_k^{(1)})<1$ and $\phi_k^{(1)}\in B(\phi_k,r_k/4)$;
\item \label{it:psi}
$\psi_k=(\phi_k^{(1)})_{\myseq_{k},J_{k}}\in\Lip_1([0,1]^d)$ is a $\myeta_k$-conical function given by Lemma~\ref{lem:conical:new};
\item \label{it:ik}
\begin{enumerate}[(i)]
	\item\label{IJ_inclusion1} $I_k\subseteq J_{k}\subseteq I_{k-1}\cap U_{k-1}$,
	\item\label{I_nested}\label{I_measure} $\overline I_k\subseteq I_{k-1}$ and $\lebmi{I_k}\le\lebmi{I_{k-1}}/2$,
	\item\label{I_F_intersection} $I_{k}\cap \Fonedim\ne\emptyset$, 
	\item\label{flat} $\gamma$ is $\myeta_k$-flat in direction~$u_k$ around both $J_k$ and $I_k$; 
\end{enumerate}
\item \label{it:diff}
for $k\geq 2$ the function $\psi_k$ is continuously differentiable on an open neighbourhood of $\gamma(I_k)$; for points $x$ from this neighbourhood its gradient $\nabla\psi_k(x)$ is given by the right-hand side of~\eqref{eq:grad_on_UJ} with $\mytau>1-\myeta_k$ and $p\in J_k$;
\item \label{it:lem:slope:ineq}
for $k\geq 2$, 
\begin{equation*}
\abs{
(\psi_k(\gamma(s))-\psi_k(\gamma(t)))-(\psi_{k-1}(\gamma(s))-\psi_{k-1}(\gamma(t)))}
\le2\myeta_{k-1}^{1/3}\abs{s-t}
\end{equation*}
for all $t\in I_{k}$ and $s\in[0,1]$;
\item
\label{it:last}
\begin{enumerate}[(i)]
	\item\label{xik} $\myseq_k\in\left(0,\min\set{\frac{r_k}{2},\frac{\theta_{k}\leb(J_{k})}{4}}\right)$,
	\item\label{it:dk} for $k\ge2$, $\myd_{k-1}<\myeta_{k-1}\lebmi{J_{k}}/4$ and
	\label{ball_inclusion} 
	 $\overline{B}(\psi_{k-1},\myd_{k-1})\subseteq B(\phi_{k-1},r_{k-1})$. 	 
\end{enumerate} 
\end{enumerate}

Consider Player~I's first move $B(\phi_1,r_1)$.
Use Lemma~\ref{lem:lip1} to find $\phi_1^{(1)}\in B(\phi_1,r_1/4)$ such that $\Lip(\phi_1^{(1)})<1$; this establishes~\eqref{it:phi} for $k=1$. Apply Lemma~\ref{lem:flat} with $\myeps=\myeta_1$ to find an open interval $J_1\subseteq[0,1]$ and $u_1\in \Sphere^{d-1}$ such that $\gamma$ is $\myeta_1$-flat in direction~$u_1$ around $J_1$ and $J_1\cap \Fonedim\neq \emptyset$. 
Let $\myseq_{1}$ 	
	be chosen arbitrarily subject to~\eqref{xik} 
	for $k=1$ and let $\psi_1:=(\phi_1^{(1)})_{\myseq_1,J_1}$ 
be a $\myeta_1$-conical function 
given by Lemma~\ref{lem:conical:new}, verifying~\eqref{it:psi}
for $k=1$. 
We declare $\psi_1$ as the first function played by Player~II. 

Let $I_1\subseteq J_{1}$ be an open interval such that $\bar I_1\subseteq (0,1)$, $I_1\cap \Fonedim\neq \emptyset$ and $\lebmi{I_1}\le1/2$. Setting $I_0=(0,1)$, we see that all parts of~\eqref{it:ik} are satisfied with $k=1$. 

We thus verified all properties~\eqref{it:first}--\eqref{it:last} for $k=1$, 
including~\eqref{it:diff}, \eqref{it:lem:slope:ineq} 
and~\eqref{ball_inclusion}, 
for which there is nothing to verify in the case $k=1$.

Let $n\ge2$.
On Step~$n$, Player~II does the following main actions: 
\begin{itemize}
\item[-] defines $\myd_{n-1}$ so that~\eqref{ball_inclusion} is satisfied with $k=n$;
\item[-] accepts Player~I's choice of $(\phi_n,r_n)$ such that
$B(\phi_n,r_n)\subseteq B(\psi_{n-1},\myd_{n-1})$;
\item[-] defines $\psi_n\in B(\phi_n,r_n)$.
\end{itemize} 

Let $f:=\phi_{n-1}^{(1)}$, $U:=U_{n-1}$, 
$\myepszero:=\myeta_{n-1}$, $\myepsone:=\myeta_n$,
$\epszero:=\myseq_{n-1}$, 
$\myI:=I_{n-1}$, $\myI_0:=J_{n-1}$ and $f_{\epszero,\myI_{0}}:=\psi_{n-1}$.
	
These objects satisfy the conditions of Lemma~\ref{lem:slope}, namely
\begin{itemize}
	\item[-] $\lip(f)<1$, by~\eqref{it:phi} for $k=n-1$,
	\item[-] $\myI\subseteq\myI_0$, by~\eqref{IJ_inclusion1} for $k=n-1$,
	\item[-] $\myI\cap \Fonedim\ne\emptyset$, by~\eqref{I_F_intersection} for $k=n-1$, 
	\item[-] $\gamma$ is $\myepszero$-flat around $\myI_0$, 	
	by~\eqref{flat} for $k=n-1$, and
	\item[-]$f_{\epszero,R_{0}}$ is a $\myeta$-conical function, given by Lemma~\ref{lem:conical:new}, due to~\eqref{it:psi} 
	with $k=n-1$.
\end{itemize}
Let 
\begin{equation}\label{eq:def-jn} 
J_n:=\myI_1
\subseteq 
\myI\cap U
=I_{n-1}\cap U_{n-1}
\end{equation}
be the open interval given by Lemma~\ref{lem:slope} applied with these settings. This verifies
the second inclusion of~\eqref{IJ_inclusion1} with $k=n$. 

From~\eqref{it:psi} with $k=n-1$ and Lemma~\ref{lem:conical:new}~\eqref{it:approx_lip} it follows that $\inorm{\psi_{n-1}-\phi_{n-1}^{(1)}}<\myseq_{n-1}$. Therefore, by~\eqref{it:first} and~\eqref{xik} with $k=n-1$, we have $\psi_{n-1}\in B(\phi_{n-1},r_{n-1})$.
Define now a positive number $\myd_{n-1}$ arbitrarily 
so as to establish~\eqref{ball_inclusion} with $k=n$.

Assume Player~I's $n$th move is an open ball $B(\phi_n,r_n)\subseteq B(\psi_{n-1},\myd_{n-1})$ and make a choice of $\myseq_{n}$ and $\phi_{n}^{(1)}\in\Lip_1([0,1]^{d})$ 	
	verifying ~\eqref{xik} and~\eqref{it:phi} for $k=n$, using Lemma~\ref{lem:lip1} for the second choice. We declare $\psi_{n}$, defined according to~\eqref{it:psi} for $k=n$, as the $n$-th function of Player~II.

We are now ready to apply the Approximation property~\ref{lem:slope}a of $f_{\epszero,\myI_{0}}$.
Let $g:=\phi_{n}^{(1)}$, $\epsone:=\myseq_{n}$ 
	and $g_{\epsone,\myI_{1}}:=\psi_{n}$. These objects fit the framework of Lemma~\ref{lem:slope} and satisfy the hypotheses of the Approximation property~\ref{lem:slope}a,
namely
\begin{itemize}
	\item[-] $\lip(g)=\lip(\phi_n^{(1)})<1$ by~\eqref{it:phi} for $k=n$,
	\item[-] $\inorm{g-f_{\epszero,\myI_0}}
	=\inorm{\phi_{n}^{(1)}-\psi_{n-1}}
	<\myd_{n-1}
	<\myeta_{n-1}\lebmi{J_n}/4
	=\myepszero\lebmi{\myI_1}/4$, \\
	which derives from 
	$\phi_n^{(1)}\in B(\phi_n,r_n)\subseteq B(\psi_{n-1},\myd_{n-1})$, and~\eqref{it:dk} with $k=n$,
	\item[-] $\epsone\in (0,\myepszero\leb(\myI_{1})/4)$, due to~\eqref{xik} for $k=n$, and
	\item[-] $g_{\epsone,\myI_{1}}=\psi_{n}$ is a $\myepsone$-conical function given by Lemma~\ref{lem:conical:new}.
\end{itemize}

Let 
\begin{equation}\label{eq:def-in}
I_n:=V\subseteq J_n 
\end{equation}
be the open interval given by the Approximation property~\ref{lem:slope}a, applied with the settings above. We then have that~\eqref{flat}, \eqref{I_F_intersection}, \eqref{it:diff}, the remaining inclusion of~\eqref{IJ_inclusion1}, \eqref{I_nested} and~\eqref{it:lem:slope:ineq} are satisfied with $k=n$.

This verifies all properties~\eqref{it:first}--\eqref{it:last} for $k=n$.

Note that~\eqref{ball_inclusion} implies that
$\overline B(\psi_{n},\myd_{n})\subseteq B(\phi_{n},r_{n})\subseteq B(\psi_{n-1},\myd_{n-1})$
for each $n\ge2$, whilst~\eqref{it:dk} and $\myeta_n\to0$ implies $\myd_{n}\to 0$ as $n\to\infty$.
Hence the intersection of balls $B(\psi_n,\myd_n)$ is a single function 
\[
f\in\bigcap_{n=1}^{\infty}B(\psi_{n},\myd_{n})\subseteq\lip_{1}([0,1]^{d}).\] 
From~\eqref{I_nested}
we derive that the intersection of all $I_{n}$ is a single point $t^*\in \bigcap_{n=1}^{\infty}I_{n}\subseteq[0,1]$. 
Moreover, from~\eqref{I_F_intersection} and~\eqref{I_measure} it follows that $t^*$ is a limit point of $\Fonedim$ and so $t^*\in \Fonedim$. By~\eqref{IJ_inclusion1} we have $t^*\in I_n\subseteq U_{n-1}$ for all $n$. Therefore $t^*\in \Fonedim\cap\bigcap_{n=1}^{\infty} U_n\subseteq E$, implying  that $\gamma'(t^*)$ exists and $\norm{\gamma'(t^*)}=1$. 

We now show that $f$ is differentiable at $\gamma(t^*)$
in the direction of $\gamma'(t^*)$ and this directional derivative is equal to $1$ or $-1$. Since $f$ is $1$-Lipschitz, this will imply that $f$ is (fully) differentiable at $\gamma(t^*)$; see \cite[Corollary~2.6]{fitzpatrick1984differentiation},
and $\norm{\nabla f(\gamma(t^*))}=1$.

Let $\eps\in(0,1/5)$.
Consider any $n\ge1$.
From~\eqref{flat} we find a sufficiently small $\delta_n>0$ such that for all $s\in (I_n\setminus\{t^*\})\cap(t^*-\delta_n,t^*+\delta_n)$ it holds
	\begin{equation}\label{eq:conv}
	\norm{u_{n}-\gamma'(t^{*})}\leq \frac{\norm{\gamma(s)-\gamma(t^{*})-(s-t^{*})u_{n}}+\norm{\gamma(s)-\gamma(t^{*})-(s-t^{*})\gamma'(t^{*})}}{\abs{s-t^{*}}}
	\le
	2\myeta_n.
	\end{equation}
Notice that the left- and right-hand sides of the above do not depend on $s$. Hence $\myeta_n\to0$ implies $u_n\to\gamma'(t^*)$.

By~\eqref{it:diff}, we have that $\psi_n$ is continuously differentiable on an open neighbourhood of $\gamma(t^*)$ with $\nabla\psi_n(\gamma(t^*))$ given by~\eqref{eq:grad_on_UJ} with $\mytau=\mytau_n>1-\myeta_n$ and $p=p_n\in J_n$. Thus,
there is a $\delta_n'>0$ such that if $0<\abs{s-t^*}<\delta_n'$, then
\begin{align*}
&\abs{
\frac{\psi_n\big(\gamma(t^*)+(s-t^*)u_n\big)-\psi_n(\gamma(t^*))}{s-t^*}-
\langle\nabla\psi_n(\gamma(t^*)),u_n\rangle}\\
=
&\abs{
\frac{\psi_n\big(\gamma(t^*)+(s-t^*)u_n)\big)-\psi_n(\gamma(t^*))}{s-t^*}-
\mytau_n\langle\frac{\gamma(t^*)-\gamma(p_n)}{\norm{\gamma(t^*)-\gamma(p_n)}},u_n\rangle
}<\eps/2.
\end{align*}
Also, using~\eqref{flat}, we get for all $s\in I_n\setminus\set{t^{*}}$, 
\[
\abs{
\frac{\psi_n(\gamma(s))-\psi_n\big(\gamma(t^*)+(s-t^*)u_n\big)}{s-t^*}
}\le\Lip(\psi_n)\myeta_n
\le\myeta_n.
\]
Let $n_1>1$ be such that $\myeta_{n_1}<\eps/4$ and
let $n\ge n_1$. Then
\begin{equation}\label{eq:psi_inner_prod}
\abs{
\frac{\psi_n(\gamma(s))-\psi_n(\gamma(t^*))}{s-t^*}-
\mytau_n\langle\frac{\gamma(t^*)-\gamma(p_n)}{\norm{\gamma(t^*)-\gamma(p_n)}},u_n\rangle
}<\eps
\end{equation}
for $s\in(t^*-\delta_n',t^*+\delta_n')\cap I_n\setminus\set{t^{*}}$ and $\mytau_n,p_n$ as above.

Recall that $u_n\to\gamma'(t^*)$ by~\eqref{eq:conv} and $\mytau_n\ge1-\myeta_n$, so $\mytau_n\to1$. Note also that $p_n,t^*\in J_n\subseteq I_{n-1}$ for every $n$, by~\eqref{IJ_inclusion1}, and $\lebmi{I_n}\to0$ from~\eqref{I_measure}. This implies $\abs{p_n-t^*}\to0$ and we deduce that
\[
\lim_{n\to\infty} \abs{
\mytau_n\langle\frac{\gamma(t^*)-\gamma(p_n)}{\norm{\gamma(t^*)-\gamma(p_n)}},u_n\rangle
}
=
\langle \gamma'(t^*),\gamma'(t^*)\rangle
=
\norm{\gamma'(t^*)}^2=1.
\]
Thus, there is $n_2\ge n_1$ such that for each $n\ge n_2$ there is $\sigma_n\in\{-1,+1\}$ with
\[
\abs{
\mytau_n\langle\frac{\gamma(t^*)-\gamma(p_n)}{\norm{\gamma(t^*)-\gamma(p_n)}},u_n\rangle
-\sigma_n
}<\eps.
\]
However,~\eqref{it:lem:slope:ineq} and~\eqref{eq:psi_inner_prod} imply that for $n> n_2$ 
\[\abs{
\mytau_{n-1}\langle\frac{\gamma(t^*)-\gamma(p_{n-1})}{\norm{\gamma(t^*)-\gamma(p_{n-1})}},u_{n-1}\rangle
-
\mytau_n\langle\frac{\gamma(t^*)-\gamma(p_n)}{\norm{\gamma(t^*)-\gamma(p_n)}},u_n\rangle
}<2\eps+2\myeta_{n-1}^{1/3}.
\]
Therefore, choosing $n_3>n_2$ so that $\myeta_{n_3}<1/1000$, we get that for all $n> n_3$
\[
\abs{\sigma_{n-1}-\sigma_n}\le
4\eps+2\myeta_{n-1}^{1/3}<1.
\]
Hence the sign $\sigma_n$ of $\mytau_n\langle\frac{\gamma(t^*)-\gamma(p_n)}{\norm{\gamma(t^*)-\gamma(p_n)}},u_n\rangle$ does not change for $n> n_3$, and so 
\[
\lim_{n\to\infty} \mytau_n\langle\frac{\gamma(t^*)-\gamma(p_n)}{\norm{\gamma(t^*)-\gamma(p_n)}},u_n\rangle
\]
exists and is equal to $1$ or $-1$. Assume, without loss of generality, that
\[
\lim_{n\to\infty} \mytau_n\langle\frac{\gamma(t^*)-\gamma(p_n)}{\norm{\gamma(t^*)-\gamma(p_n)}},u_n\rangle
=1,
\]
and choose $n_4> n_3$ so that 
\begin{equation}\label{eq:limit}
\abs{
\mytau_n\langle\frac{\gamma(t^*)-\gamma(p_n)}{\norm{\gamma(t^*)-\gamma(p_n)}},u_n\rangle
-1
}<\eps
\end{equation}
for all $n\ge n_4$ and $\displaystyle\sum_{n= n_4}^{\infty}\myeta_n^{1/3}<\eps/2$.
Let $n\ge n_4$ and and $s\in[0,1]\setminus\set{t^{*}}$. We claim that
\begin{equation}\label{eq:alpha}
\alpha_n:=
\abs{
\frac{f(\gamma(s))-f(\gamma(t^*))}{s-t^*}-
\frac{\psi_n(\gamma(s))-\psi_n(\gamma(t^*))}{s-t^*}
}
<\eps.
\end{equation}
Indeed, using $\inorm{\psi_k-f}\to0$, 
\eqref{it:lem:slope:ineq} and~\eqref{xik}, we get 
\begin{align*}
\alpha_n
\le
\sum_{k=n}^\infty
\abs{
\frac{\psi_{k+1}(\gamma(s))-\psi_{k+1}(\gamma(t^*))}{s-t^*}
-
\frac{\psi_k(\gamma(s))-\psi_k(\gamma(t^*))}{s-t^*}
}
\le
\sum_{k=n}^\infty2\myeta_{k}^{1/3}
<\eps.
\end{align*}
Hence, whenever $0<\abs{s-t^*}<\delta_n'$, we have, using~\eqref{eq:psi_inner_prod}, \eqref{eq:limit} and~\eqref{eq:alpha}, that
\[
\abs{
\frac{f(\gamma(s))-f(\gamma(t^*))}{s-t^*}-
1
}
<3\eps.
\]
Thus, $f\circ \gamma$ is differentiable at $t^{*}$ with $(f\circ \gamma)'(t^{*})=1$. Since $\gamma$ is also differentiable with derivative of magnitude one at $t^{*}\in E$, it follows that $f$ is differentiable with derivative of magnitude one at $\gamma(t^{*})$, thus $f\in S$, so Player~II wins.
\end{proof}
\begin{remark}\label{rem:additional-diff}
	The proof of Theorem~\ref{thm:bm} given above may be slightly modified to obtain a proof of the stronger statement referred to in Remark~\ref{rem:first_typ_fn_typ_point}, namely that, in the setting of Theorem~\ref{thm:bm}, a typical function $f\in\lip_{1}([0,1]^{d})$ is differentiable with derivative of magnitude one at $\gamma(t)$ for typical $t\in F$. We describe the necessary additional details:
	
	Firstly, we modify the proof of Theorem~\ref{thm:bm} to show that the set of pairs 
	\[
	S_{\times}=\{(f,t):f\in\lip_{1}([0,1]^{d}), t\in F, f\text{ is differentiable at }\gamma(t),
	\norm{\nabla f(\gamma(t))}=1
	\}
	\]
	 is residual in $\lip_{1}([0,1]^{d})\times\Fonedim$. For this we define the Banach-Mazur game in $\lip_{1}([0,1]^{d})\times \Fonedim$, where on each turn, each of the two players supplies a direct product of an open ball around a $1$-Lipschitz function and an open interval with non-empty intersection with $\Fonedim$. Assuming that Player~I supplies $B(f_n,r_n)\times G_n$ on their $n$th turn, define $J_n\subseteq I_{n-1}\cap U_{n-1}\cap G_n$ (compare with~\eqref{eq:def-jn}). Then the reply $B(\psi_n,\rho_n)\times I_n$ from Player~II  will guarantee that Player~II wins the game in $\lip_{1}([0,1]^{d})\times\Fonedim$ with target $S_{\times}$ (here $I_n$ is defined by~\eqref{eq:def-in}).

	Having established that $S_\times$ is residual in $\lip_{1}([0,1]^{d})\times F$, by Theorem~\ref{thm:bm_standard}, it only remains to apply 
		the Kuratowski-Ulam theorem (see, for example, \cite[Theorem~8.41~(iii)]{kechris2012classical}).
		As $E$ is a relatively residual subset of $F$, 
		a typical function $f\in\lip_{1}([0,1]^{d})$ is differentiable with derivative of magnitude one at $\gamma(t)$ for typical $t\in E$.
\end{remark}
\section{Curve detection of non-coverable sets}\label{sec:construction}
In this section we prove Theorem~\ref{lemma:key}.
\paragraph{Notation and convention.}
We introduce some notation designed for $L^{\infty}$ mappings $\phi\colon I\to \R^{d}$ where $I\subseteq\R$ is a closed interval. In what follows $\phi$ will either be a $C^{1}$-smooth or a Lipschitz mapping or the derivative of such. We use the notation $I(\phi)$ to denote its domain $I$ and $\image(\phi)$ to denote the set of all its values, $\image(\phi)=\phi(I(\phi))=\phi(I)$.

For a subset $U\subseteq I$ we consider the quantity
\begin{equation}\label{eq:def-osc}
\osci{\phi}{U}:=\ess\sup\set{\norm{\phi(s)-\phi(t)}\colon s,t\in U},
\end{equation}
which corresponds to the \emph{oscillation} of $\phi$ on the set $U$. 

Recall that we call a Lipschitz or a $C^{1}$-smooth mapping $\gamma\colon I\to\R^{d}$ a \emph{curve} if the magnitude of its derivative is bounded away from zero almost everywhere. Moreover, given a $C^{1}$-smooth mapping $\gamma\colon I\to \R^{d}$ defined on a closed interval $I$ we interpret the derivative $\gamma'$ at the endpoints of $I$ as the one sided derivative so that $\gamma'$ is a well-defined mapping $I\to \R^{d}$.

Given sets $\Fhidim,U\subseteq \R^{d}$ with $U$ open we define $\Gamma_{\Fhidim}(U)$ as the collection of all $C^{1}$-smooth curves $\gamma\colon I\to \R^{d}$ with $\leb(\gamma^{-1}(\Fhidim))>0$ and $\image(\gamma)\subseteq U$. 

We let $\theta\colon\R^{d}\setminus\set{0}\to \Sphere^{d-1}$ denote standard spherical projection
\begin{equation*}
\theta(x)=\frac{x}{\norm{x}},\qquad x\in\R^{d}\setminus \set{0}.
\end{equation*}
\begin{defn}\label{def:partition}
	For each $n\ge 1$, consider the set $D_n$ of $(n-1)$-tuples $\beta=(i_1,\dots,i_{n-1})$, where each $i_j\in\N$ satisfies $1\le i_j\le 2^{d}$. The set $D_{1}$ should be interpreted as a singleton set containing the empty sequence $\emptyset$. 
For $\beta=(i_1,\dots,i_{n-1})\in D_n$ let $|\beta|=n$ and, for each $m\le n$, let $\beta|_m=(i_1,\dots,i_{m-1})\in D_m$.
	Define the order on each $D_n$ according to the lexicographical order, and extend this to an ordering on $\bigcup_{n=1}^{\infty}D_{n}$ via the following rule: if $|\beta'|<|\beta|$, then let $\beta'<\beta$ if $\beta'\le\beta|_{|\beta'|}$ and $\beta'>\beta$ otherwise.
	
	For each $n\ge1$ let $\{Q_\beta\}_{\beta\in D_n}$ be the standard dyadic partition of $[-1,1]^d$ into 
	$2^{(n-1)d}$ closed 	
	cubes with side $2^{-n+2}$, such that  
	$Q_{\beta}\subseteq Q_{\beta'}$ iff $\abs{\beta'}\leq \abs{\beta}$ and $\beta|_{\abs{\beta'}}=\beta'$.
	For each $n\ge1$ and $\beta\in D_n$ let $S_\beta=Q_\beta\cap \Sphere^{d-1}$.
	Define 
	\[
	T_n=\set{\beta\in D_n\colon
		\interior S_\beta\ne\emptyset},
	\]
	where the interior is taken with respect to the subspace topology on $\Sphere^{d-1}$. Note that for any $\beta\in T_n$, we have 
	\[S_\beta=\bigcup\set{ S_{\beta'}\colon \beta'\in T_{n+1}, 
		\beta'|_{n}=\beta},\]
	and for any $n\ge1$
	\[
	\Sphere^{d-1}=\bigcup_{\beta\in T_n}S_\beta.\]
	In particular, note that $\Sphere^{d-1}=S_{\emptyset}=\bigcup_{\beta\in T_{1}}S_{\beta}$. For each $\rho>0$, $n\ge1$ and $\beta\in T_n$ we will also denote by $B(S_{\beta},\rho)$ the open $\rho$-neighbourhood of $S_{\beta}$, considered as a subset of $\Sphere^{d-1}$, with respect to the induced topology and Euclidean metric $\norm{\cdot-\cdot}_2$ from $\R^d$.
\end{defn}

For $1\le m\le n$ and $\beta'\in T_m$, $\beta\in T_n$ we let 
\begin{equation}\label{eq:metric}
\metric(\beta',\beta)=\max\{\norm{x-y}_2\colon x\in S_{\beta'},y\in S_\beta\}.
\end{equation}
In this way, $\metric(\beta,\beta)$ is the Euclidean diameter of $S_\beta$. We note for future reference that $\metric(\beta,\beta)\to0$ as $|\beta|\to\infty$.

For each $k\in\N$ we let $\P_{k}$ denote the collection of open intervals in $[0,1]$ with consecutive $(k-1)$-th level dyadic endpoints. That is,
\begin{equation*}
\P_{k}:=\left\{\br{\frac{i-1}{2^{k-1}},\frac{i}{2^{k-1}}}\colon i=1,2,\ldots,2^{k-1}\right\}.
\end{equation*}
Further we let $\D_{k}$ denote the set of $(k-1)$-th level dyadic numbers in $[0,1]$, that is,
\begin{equation*}
\D_{k}:=\left\{\frac{i}{2^{k-1}}\colon i=0,1,2,\ldots,2^{k-1}\right\}=\bigcup_{I\in\P_{k}}\partial I.
\end{equation*}
Finally, for a subset $Y$ of $[0,1]$ we will use the notation $Y\comp$ to denote its complement $[0,1]\setminus Y$.

For the reader's convenience we repeat the statement to be proved:
\key*
The proof of Theorem~\ref{lemma:key} occupies the entire remainder of the present section and contains several lemmata, the hypotheses of which should be understood as the current setting in which the statement appears in the proof. Thus, each such statement refers to objects previously constructed.

By hypothesis there are open sets $O_{1},O_{2},\ldots\subseteq \R^{d}$ such that for each $n\in\N$ the set $O_{n}\cap \Fhidim$ is a dense subset of $\Fhidim$ and 
\begin{equation*}
\Fhidim\cap\bigcap_{n=1}^{\infty}O_{n}\subseteq \A.
\end{equation*}
We may assume that $O_1=\R^{d}$ and $O_{n+1}\subseteq O_n$ for all $n\ge1$. 

\paragraph{Iterative Construction.}
Let $L_1=c_{1}=1$ and 
\begin{equation}\label{eq:sequences_Lk_ck}
L_{k}=L_{k-1}+2^{-k},\qquad c_{k}=c_{k-1}-2^{-k},\qquad k\geq 2.
\end{equation}
\begin{remark}
	Note that $1\leq L_{k}\leq 2$  and $1/2\leq c_{k}\leq 1$ for all $k\in\N$. 
In fact, we could have chosen any strictly decreasing sequence $c_n$ and strictly increasing sequence $L_n$ with $0<c_\infty=\inf_{n\ge1} c_n<c_1=1=L_1<\sup_{n\ge1} L_n=L_\infty<\infty$. This would change constants in estimates for derivatives of $\gamma_k$ in~\eqref{lip} and~\eqref{conv_der} below, hence also in estimates for derivatives of the limit curve $\gamma_\infty$, see Lemma~\ref{lemma:gamma_inf_der}. However, a particular choice of $c_\infty$ and $L_\infty$ does not affect the strength of the result we prove. 
\end{remark}

Below, we construct sequences of 
\begin{itemize}
	\item[-] piecewise $C^{1}$-smooth, Lipschitz curves $\gamma_k:[0,1]\to\R^{d}$, 
	\item[-]  numbers $\alpha_k,\lambda_{k},r_k, \rho_{k},\psi_{k}>0$, $p_{k}\in\N$,
	\item[-] sets
	\begin{equation}\label{eq:sets:gk_hk}
	G_{k}=\bigcup_{j=1}^{p_{k}}G_{k,j}\subseteq [0,1],\quad H_{k}=\bigcup_{j=1}^{p_{k}}H_{k,j}\subseteq [0,1]
	\end{equation}
as finite unions of closed intervals $G_{k,j}$ and $H_{k,j}$,
\item[-] open sets
\begin{equation}\label{eq:sets:uk}
\quad U_{k}=\bigcup_{j=1}^{p_{k}}U_{k,j}\subseteq (0,1)^{d}
	\end{equation}
	as finite unions of open sets $U_{k,j}$,
	\item[-]
	sets $\nullA_k,W_{k}\subseteq [0,1]$ with $\nullA_{k}$ finite and $W_{k}\supseteq \nullA_{k}$ being a finite union of closed intervals;
	\item[-] functions $\beta_{k}\colon H_{k}\to T_k$,
\end{itemize}
such that the following conditions are satisfied for each $k\ge1$:
\begin{enumerate}[(A)]
\item\label{lip}
$\frac{1}{2}\leq c_{k}\leq \norm{\gamma_{k}'(t)}\leq L_{k}\leq 2$ for all $t\in [0,1]\setminus \nullA_{k}$.
\item\label{n_portion_positive} For any interval 
$I\in\P_k$ either 
\begin{equation*}
\leb\br{I\cap\gamma_k^{-1}(\Fhidim\cap O_k)\cap \bigcap_{i=1}^{k-1} W_{i}\comp}\geq \alpha_k
\quad\text{ or }\quad
I\cap\gamma_k^{-1}(\Fhidim)\cap \bigcap_{i=1}^{k-1} W_{i}\comp=\emptyset.
\end{equation*}
\item\label{curve_changes}
If $k\geq 2$, $I\in \P_{k}$ and $I\cap\gamma_{k-1}^{-1}(\Fhidim)\cap\bigcap_{i=1}^{k-1}W_{i}\comp=\emptyset$ then $\gamma_{k}(t)=\gamma_{k-1}(t)$ for all $t\in I$.
\item\label{curve_conv_both}
If $k\geq 2$, then \begin{enumerate}[(i)]
	\item\label{curve_conv} $\norm{\gamma_k(t)-\gamma_{k-1}(t)}\le \psi_{k-1}/2$ for all $t\in[0,1]$, and
	\item\label{psi_decay} $\psi_{k}\in (0,\psi_{k-1}/2)$. 
\end{enumerate}
\item\label{measure_diff_both}
If $k\geq 2$, then
\begin{enumerate}[(i)]
	\item\label{eq:measure_differ} $\leb(\set{t\in[0,1]\colon \gamma_{k}(t)\neq \gamma_{k-1}(t)})<\frac{\alpha_{k-1}}{4}$, and
	\item\label{eq:alphak}
	$0<\alpha_{k}\leq2^{-k}\alpha_{k-1}$.
\end{enumerate}
\item\label{H_in_ball}
$H_{k}$ is the union of finitely many pairwise disjoint, closed intervals $H_{k,j}$, $j=1,\ldots,p_{k}$. These sets have the following properties:
\begin{enumerate}[(i)]
	\item\label{H_k_1} If $k\geq 2$, $I\in\P_{k}$ and $\gamma_{k-1}^{-1}(\Fhidim)\cap I\cap\bigcap_{i=1}^{k-1}W_{i}\comp\neq \emptyset$ then there exists an index $j\in\set{1,\ldots,p_{k}}$ such that $H_{k,j}\subseteq I$.
	\item\label{H_k_2} $H_{k}\subseteq \bigcap_{i=1}^{k-1}W_{i}\comp$ and $\leb(H_{k,j}\cap \gamma_{k}^{-1}(\Fhidim\cap O_{k}))\geq \alpha_{k}$ for each $j=1,\ldots,p_k$.
	\item\label{H_k_2a} For all $1\leq l<k$ the components of $H_{l}$ and $H_{k}$ are either nested or disjoint. More precisely, for all $l\in\set{1,\ldots,k-1}$, $j\in\set{1,\ldots,p_{k}}$ and $i\in\set{1,\ldots,p_{l}}$ we have
			\begin{equation*}
			H_{l,i}\cap H_{k,j}=\emptyset\qquad \text{or}\qquad H_{k,j}\subseteq \interior(H_{l,i}).
			\end{equation*}
	\item\label{H_k_3} $\overline{B}(\gamma_{k}(H_{k,j}),\psi_{k})\subseteq U_{k,j}\subseteq O_{k}$ for all $j\in\set{1,\ldots,p_{k}}$.
	\item\label{H_k_4} $\beta_{k}|_{H_{k,j}}\in T_k$ is constant with value
	\begin{equation*}
	\beta_{k,j}:=\min\set{\beta\in T_k\colon 
\exists\gamma\in	\Gamma_{\Fhidim}(U_{k,j})
\text{ s.t. }\image(\theta(\gamma'))\subset B(S_{\beta},2^{-k})
}.
	\end{equation*}
	\item\label{beta_increasing} If $t\in H_{k}\cap H_{l}$ with $1\le l< k$ then 
	$\beta_{k}(t)> \beta_{l}(t)$.
\end{enumerate}   
\item\label{throw_away}(Throw away sets.)
\begin{enumerate}[(i)]
	\item\label{finite_increasing}$\nullA_{k}= \nullA_{k-1}\cup\D_{k}\cup \bigcup_{j=1}^{p_{k}}(\partial H_{k,j}\cup\partial G_{k,j})\cup\bigcup_{i=1}^{k-1}\partial W_i$ is a finite set and the restriction of $\gamma_{k}$ to each component of $[0,1]\setminus \nullA_{k}$ is $C^{1}$-smooth.
	\item\label{W_k}
	$W_{k}$ is a finite union of closed	
	subintervals of $[0,1]$, \\
	$\nullA_{k}\subseteq \interior W_{k}\cup\{0,1\}\subseteq W_k$ and $\leb(W_{k})\leq 2^{-k}\alpha_{k}$.
	\end{enumerate} 
\item (Convergence of derivatives.)
\label{conv_der}
\begin{enumerate}[(i)]
	\item\label{deriv_change_easy} If $t\in [0,1]\setminus(H_{k}\cup \nullA_{k})$ then $\norm{\gamma_{k}'(t)-\gamma_{k-1}'(t)}\leq2^{-k}$.
	\item\label{deriv_change_tricky_same_size} If $k\geq 2$ and $t\in H_{k}\setminus \nullA_{k}$ then $\Bigg|{\norm{\gamma_{k}'(t)}-\norm{\gamma_{k-1}'(t)}}\Bigg|\leq 2^{-k}$.
	\item\label{deriv_mag_constant} The mapping $t\mapsto \norm{\gamma_{k}'(t)}$ is constant on each component of $[0,1]\setminus \nullA_{k}$. 
	\item\label{deriv_change_tricky_compass} If $t\in H_{k}\setminus \nullA_{k}$ then $\theta(\gamma_{k}'(t))\in B(S_{\beta_{k}(t)},2^{-k})$. 
\end{enumerate}
\end{enumerate}
Let $\nullA_{0}=W_{0}=\emptyset$. Use Remark~\ref{rem:portion} to find a $C^{1}$-smooth curve $\gamma_1\colon [0,1]\to(0,1)^{d}$ with $\norm{\gamma_1'(t)}=1$ for all $t\in[0,1]$ and 
\begin{equation*}
\alpha_{1}:=\leb\left(\gamma_{1}^{-1}(\Fhidim)\right)>0.
\end{equation*}
Choose $\psi_{1}>0$ small enough so that $\overline{B}(\image(\gamma_{1}),\psi_{1})\subseteq (0,1)^{d}$. Further, set 
\begin{align*}
&p_{1}=r_{1}=\rho_{1}=\lambda_{1}=1,\qquad 
\nullA_{1}=\set{0,1},\qquad 
&W_{1}=\left[0,\frac{\alpha_{1}}{4}\right]\cup\left[1-\frac{\alpha_{1}}{4},1\right],\\
&G_{1}=G_{1,1}=H_{1}=H_{1,1}=[0,1], \text{ and} 
&U_{1}=U_{1,1}=(0,1)^{d}\subseteq O_{1}.
\end{align*}
Define $\beta_{1}\colon H_{1}\to T_{1}=\set{\emptyset}$ as the (only possible) constant function and set $\beta_{1,1}=\emptyset\in T_{1}$. Then for $k=1$ all conditions~\eqref{lip}--\eqref{conv_der} are either trivially satisfied or are void.

Assume now that $n\ge2$ and the conditions~\eqref{lip}--\eqref{conv_der}  are satisfied for $k=1,\ldots,n-1$. The $n$-th step of the construction proceeds as follows: Let $I_{n,1},\ldots,I_{n,p_{n}}$ be an enumeration of those intervals $I\in\P_{n}$ for which $\gamma_{n-1}^{-1}(\Fhidim)\cap I\cap\bigcap_{i=1}^{n-1}W_{i}\comp\neq\emptyset$. For each $j=1,\ldots,p_{n}$ we nominate a point $t_{n,j}\in \gamma_{n-1}^{-1}(\Fhidim)\cap I_{n,j}\cap\bigcap_{i=1}^{n-1}W_{i}\comp$. As $t_{n,j}\in I_{n,j}\cap \bigcap_{i=1}^{n-1}W_{i}\comp
$, and the latter is an open set, 
we may choose $\lambda_{n}>0$ sufficiently small so that for all $j=1,\ldots,p_{n}$
\begin{equation}\label{eq:osc_gnj}
G_{n,j}:=[t_{n,j}-\lambda_{n},t_{n,j}+\lambda_{n}]\subseteq I_{n,j}\cap\bigcap_{i=1}^{n-1}W_{i}\comp
\quad \text{and}\quad 
\osci{\gamma_{n-1}'}{G_{n,j}}\leq 2^{-(n+1)}.
\end{equation}
The second condition of~\eqref{eq:osc_gnj} can be achieved due to the fact, coming from~\eqref{finite_increasing} for $k=n-1$, that $\gamma_{n-1}$ restricted to each component of $[0,1]\setminus \nullA_{n-1}$ is $C^{1}$-smooth. We also impose a further condition on $\lambda_n$, as follows:
\begin{equation}
\label{eq:lambda_n}
\lambda_{n}\in
\Bigl(0,\tfrac18\min(\alpha_{n-1}/p_{n},\psi_{n-1})\Bigr).
\end{equation}

Observe that
\begin{equation}\label{eq:stay_inside}
\gamma_{n-1}(H_{l,i})\subseteq U_{l,i}\qquad \text{ whenever $l\in\set{1,\ldots,n-1}$ and $i\in\set{1,\ldots,p_{l}}$.}
\end{equation}
If $n=2$ this is clear. For $n>2$ we argue as follows: Given indices $l\in\set{1,\ldots,n-2}$, $i\in\set{1,\ldots,p_{l}}$ and $t\in H_{l,i}$ we may use~\eqref{curve_conv} for $l+1\le k\le n-1$ and $\psi_{k}\in(0,\psi_{k-1}/2)$ from~\eqref{H_k_3} to deduce that  
\begin{equation*}
\norm{\gamma_{n-1}(t)-\gamma_{l}(t)}\leq \sum_{k=l+1}^{n-1}\frac{\psi_{k-1}}{2}<\psi_{l}.
\end{equation*}
In case $l=n-1$ the above inequality is trivially satisfied.
Together with~\eqref{H_k_3} for $k\le n-1$ this verifies~\eqref{eq:stay_inside}. Now, let $r_{n}>0$ be chosen sufficiently small so that
\begin{equation}\label{eq:r_n:1}
B(\gamma_{n-1}(H_{l,i}),r_{n})\subseteq U_{l,i}\qquad \text{ whenever $l\in\set{1,\ldots,n-1}$ and $i\in\set{1,\ldots,p_{l}}$}
\end{equation}
and
\begin{equation}
\label{eq:r_n:2}
r_n\in(0,
2^{-(n+3)}\lambda_n
).
\end{equation}
For each $j=1,\ldots,p_{n}$ we set 
\begin{equation}\label{eq:def_unj}
U_{n,j}:=B(\gamma_{n-1}(t_{n,j}),r_{n})\cap O_{n}.
\end{equation}
Note that $U_{n,j}$ is open and has non-empty intersection with $\Fhidim$ due to the density of $\Fhidim\cap O_{n}$ in $\Fhidim$, and $\gamma_{n-1}(t_{n,j})\in \Fhidim$. 
Let
\begin{equation}\label{eq:def_beta}
\beta_{n,j}:=\min\set{\beta\in T_n\colon 
	\exists\gamma\in	\Gamma_{\Fhidim}(U_{n,j})
	\text{ s.t. }\image(\theta(\gamma'))\subset B(S_{\beta},2^{-n})
}.
\end{equation}
The hypothesis that $\Fhidim$ has every portion of positive cone width guarantees that the set for which the minimum in the definition of $\beta_{n,j}$ is considered is non-empty. 
For each $j=1,\ldots,p_{n}$ we choose, using Remark~\ref{rem:portion}, a $C^{1}$-curve $\nu_{n,j}\in\Gamma_\Fhidim(U_{n,j})$ such that 
\begin{equation}\label{eq:nu_norm}
\image(\theta(\nu_{n,j}'))\subseteq B(S_{\beta_{n,j}},2^{-n})\quad
\text{ and }\quad
\norm{\nu_{n,j}'(t)}=\norm{\gamma_{n-1}'(t_{n,j})}
\end{equation}
for all $t\in I(\nu_{n,j})$. By choosing $\rho_{n}>0$ sufficiently small, in particular,
\begin{equation}\label{eq:rho_n}
\rho_n\in(0,2^{-(n+4)}\lambda_n)
\end{equation}
and restricting each $\nu_{n,j}$ to a smaller and shifted interval and reparameterising if necessary, we may assume that for $j=1,\ldots,p_{n}$ each $\nu_{n,j}$ is defined on the interval 
\begin{equation*}
I(\nu_{n,j})=
H_{n,j}:=[t_{n,j}-\rho_{n},t_{n,j}+\rho_{n}]\subseteq \interior(G_{n,j}).
\end{equation*}
Note, for future reference, that for each $j=1,\ldots,p_n$
\begin{equation}\label{eq:nu}
\lebm{\nu_{n,j}^{-1}(\Fhidim\cap O_{n})}>0.
\end{equation}

We now verify properties~\eqref{lip}--\eqref{conv_der} for $k=n$.
We start by checking various parts of~\eqref{H_in_ball}. 
By definition of $t_{n,j}$ and $H_{n,j}$ we have that~\eqref{H_k_1} with $k=n$ is satisfied. Moreover,~\eqref{H_k_2a} with $k=n$ is readily verified: We note that $H_{n,j}$ is a subset of a connected component of $\bigcap_{i=1}^{n-1}W_{i}\comp\subseteq[0,1]\setminus \nullA_{n-1}$, whereas $\partial H_{l,i}\subseteq \nullA_l\subseteq \nullA_{n-1}$, by~\eqref{finite_increasing} with $k\le n-1$. Thus, it is clear that if $H_{l,i}\cap H_{n,j}\ne\emptyset$, then $\interior(H_{l,i})\supseteq H_{n,j}$, establishing~\eqref{H_k_2a} with $k=n$.

Let $G_{n}$ and $H_{n}$ be defined according to~\eqref{eq:sets:gk_hk}. Then
the first condition of~\eqref{H_k_2} with $k=n$ is satisfied. 
Define $\beta_{n}\colon H_{n}\to T_n$ by 
\begin{equation}\label{eq:beta_nj}
\beta_{n}(t)=\beta_{n,j},\qquad t\in H_{n,j},
\end{equation}
in accordance with~\eqref{H_k_4}, $k=n$.

We are now ready to verify~\eqref{beta_increasing} with $k=n$.
Suppose $t\in H_{n,j}\cap H_{l,i}\neq \emptyset$ for some $l\in\set{1,\ldots,n-1}$, $j\in\set{1,\ldots,p_{n}}$ and $i\in\set{1,\ldots,p_{l}}$. Then $H_{n,j}\subseteq H_{l,i}$ by~\eqref{H_k_2a}, which we already verified for $k=n$. In particular we have $t_{n,j}\in H_{l,i}$ and therefore $U_{n,j}\subseteq B(\gamma_{n-1}(t_{n,j}),r_{n})\subseteq U_{l,i}$, by~\eqref{eq:def_unj} and~\eqref{eq:r_n:1}. This trivially implies 
\begin{equation}\label{eq:Gamma_inclusion}
\Gamma_{\Fhidim}(U_{n,j})\subseteq \Gamma_{\Fhidim}(U_{l,i}).
\end{equation}
We will use this inclusion together with the following basic facts, readily verifiable from Definition~\ref{def:partition}:
\begin{align}
B(S_{\beta},2^{-n})&\subseteq B(S_{\beta|_{l}},2^{-l}), 
&\beta\in T_{n}, \label{eq:Sbeta_ball_inclusion}\\
\beta&>\beta|_{l}, &\beta\in T_{n}, \label{eq:beta_restriction}\\
\set{\beta|_{l}\colon \beta\in T_{n}}&=T_{l}.& \label{eq:k_res_Tn}
\end{align}
With these properties at hand, together with~\eqref{eq:def_beta} and~\eqref{eq:beta_nj}, we observe 
\begin{align*}
\beta_{n}(t)
=\beta_{n,j}
&=\min\set{\beta\in T_n,\colon 
	\exists\gamma\in	\Gamma_{\Fhidim}(U_{n,j})
	\text{ s.t. }\image(\theta(\gamma'))\subset B(S_{\beta},2^{-n})
}\\
&\geq \min\set{\beta\in T_n\colon 
	\exists\gamma\in	\Gamma_{\Fhidim}(U_{l,i})
	\text{ s.t. }\image(\theta(\gamma'))\subset B(S_{\beta|_{l}},2^{-l})
}\\
&> \min\set{\beta|_{l}\colon \beta\in T_n,\,
	\exists\gamma\in	\Gamma_{\Fhidim}(U_{l,i})
	\text{ s.t. }\image(\theta(\gamma'))\subset B(S_{\beta|_{l}},2^{-l})
}\\
&=\min\set{\beta\in T_l\colon 
	\exists\gamma\in	\Gamma_{\Fhidim}(U_{l,i})
	\text{ s.t. }\image(\theta(\gamma'))\subset B(S_{\beta},2^{-l})
}=\beta_{l,i}=\beta_{l}(t).
\end{align*}
The first inequality above follows from~\eqref{eq:Gamma_inclusion} and~\eqref{eq:Sbeta_ball_inclusion}, the second from~\eqref{eq:beta_restriction} and the subsequent equality from~\eqref{eq:k_res_Tn}. This completes the verification of~\eqref{beta_increasing}. 

We define the new curve $\gamma_{n}\colon[0,1]\to \R^{d}$ by
\begin{equation}\label{eq:def_gamma_n}
\gamma_{n}(t)=\begin{cases}\gamma_{n-1}(t) & \text{if }t\in[0,1]\setminus \bigcup_{j=1}^{p_{n}}\interior(G_{n,j}),\\
\nu_{n,j}(t) & \text{if }t\in H_{n,j},\, j=1,\ldots,p_{n},
\end{cases}
\end{equation}
and the condition that on each of the components of $\bigcup_{j=1}^{p_{n}}(G_{n,j}\setminus H_{n,j})$
the curve $\gamma_{n}$ is affine and hence $\norm{\gamma_n'(t)}$ is constant. Condition~\eqref{curve_changes} with $k=n$ is clearly satisfied.

Since, for each $j=1,\ldots,p_{n}$, $\gamma_{n}(H_{n,j})=\nu_{n,j}(H_{n,j})$ is a compact subset of the open set $U_{n,j}\subseteq O_{n}$, we may choose $\psi_{n}\in(0,\psi_{n-1}/2)$ establishing~\eqref{psi_decay} and~\eqref{H_k_3} for $k=n$, 

Note that 
\begin{equation}\label{eq:Gnj_cover_difference}
\set{t\in[0,1]\colon \gamma_{n}(t)\neq \gamma_{n-1}(t)}\subseteq \bigcup_{j=1}^{p_{n}}\interior(G_{n,j}),
\end{equation}
and the latter set has measure precisely $2p_{n}\lambda_{n}$. Therefore, we get~\eqref{eq:measure_differ} with $k=n$ by~\eqref{eq:lambda_n}. 
From the fact~\eqref{eq:Gnj_cover_difference} that $\gamma_{n}$ and $\gamma_{n-1}$ differ only on the pairwise disjoint intervals $G_{n,j}$ of length $2\lambda_{n}$, it also follows, using again~\eqref{eq:lambda_n} and~\eqref{lip}, that
\begin{equation*}
\lnorm{\infty}{\gamma_{n}-\gamma_{n-1}}
\leq 
(L_{n}+L_{n-1})\lambda_{n}<4\lambda_n\leq \psi_{n-1}/2.
\end{equation*}
This verifies~\eqref{curve_conv} with $k=n$. 

Recall~\eqref{eq:nu} and~\eqref{eq:def_gamma_n}, and set
\begin{equation*}
\alpha_{n}:=\min\set{2^{-n}\alpha_{n-1},\min_{1\leq j\leq p_{n}}\leb(\nu_{n,j}^{-1}(\Fhidim))}>0,
\end{equation*}
to obtain the remaining part of~\eqref{H_k_2}, and~\eqref{eq:alphak} for $k=n$. In particular, 
all parts of~\eqref{H_in_ball} are now established. 
From~\eqref{H_k_2} for $k=n$, the choice of $I_{n,j}\supseteq G_{n,j}\supseteq H_{n,j}$ and~\eqref{eq:Gnj_cover_difference} we derive~\eqref{n_portion_positive} for $k=n$.

Define $\nullA_{n}$ as in~\eqref{finite_increasing} with $k=n$. Then we see that the second condition of~\eqref{finite_increasing} with $k=n$ is satisfied, using~\eqref{finite_increasing} for $k=n-1$, \eqref{eq:Gnj_cover_difference} and the way that $\gamma|_{G_{n,j}}$ is defined for each $j\in\set{1,\dots,p_n}$. For each point in $\nullA_{n}$ we now nominate a small,  relatively open interval around this point so that the total measure of the union of all such intervals is at most $2^{-n}\alpha_{n}$. We define $W_{n}$ as the union of closures of these intervals so that~\eqref{W_k} with $k=n$ is satisfied.

The conditions~\eqref{deriv_change_tricky_same_size}, \eqref{deriv_mag_constant} and~\eqref{deriv_change_tricky_compass} are now easily verified via~\eqref{eq:def_gamma_n} and~\eqref{eq:nu_norm}. For~\eqref{deriv_change_tricky_same_size} we additionally use $t_{n,j}\in H_{n,j}\subseteq G_{n,j}$ and~\eqref{eq:osc_gnj}, 
whilst for~\eqref{deriv_mag_constant} we additionally recall~\eqref{finite_increasing} with $k=n$, \eqref{deriv_mag_constant} for $k=n-1$ and~\eqref{eq:Gnj_cover_difference}. 

If $t\in [0,1]\setminus (\nullA_{n}\cup G_{n})$ then by~\eqref{eq:Gnj_cover_difference} and~\eqref{finite_increasing} for $k=n$ we have $\gamma_{n}(t)=\gamma_{n-1}(t)$ and $\gamma_{n}'(t)=\gamma_{n-1}'(t)$. Therefore, both~\eqref{lip} and~\eqref{deriv_change_easy} are satisfied for $t$. If $t\in G_{n}\setminus (\nullA_{n}\cup H_{n})$ then without loss of generality $t$ belongs to an interval of the form $\sqbr{t_{n,j}-\lambda_{n},t_{n,j}-\rho_{n}}$, restricted to which $\gamma_{n}$ is affine. Hence, 
\begin{align*}
\gamma_{n}'(t)
&=\frac{\gamma_{n}(t_{n,j}-\rho_{n})-\gamma_{n}(t_{n,j}-\lambda_{n})}{\lambda_{n}-\rho_{n}}\\
&=\frac{\gamma_{n-1}(t_{n,j}-\rho_{n})-\gamma_{n-1}(t_{n,j}-\lambda_{n})}{\lambda_{n}-\rho_{n}}+\frac{\nu_{n,j}(t_{n,j}-\rho_{n})-\gamma_{n-1}(t_{n,j}-\rho_{n})}{\lambda_{n}-\rho_{n}}.
\end{align*}
Further, since $\image(\nu_{n,j})\subseteq U_{n,j}\subseteq B(\gamma_{n-1}(t_{n,j}),r_{n})$, by~\eqref{eq:def_unj}, we have
\begin{align*}
\norm{\nu_{n,j}(t_{n,j}-\rho_{n})-\gamma_{n-1}(t_{n,j}-\rho_{n})}&\leq r_{n}+\norm{\gamma_{n-1}(t_{n,j})-\gamma_{n-1}(t_{n,j}-\rho_{n})}\\
&\leq r_{n}+L_{n-1}\rho_{n}
\leq r_{n}+2\rho_{n}.
\end{align*}
We conclude that
\begin{equation}\label{eq:der_change_bound}
\norm{\gamma_{n}'(t)-\gamma_{n-1}'(t)}
\leq 
\osci{\gamma_{n-1}'}{G_{n,j}}+\frac{r_{n}+2\rho_{n}}{\lambda_{n}-\rho_{n}}
\leq
2^{-(n+1)}+2\frac{r_{n}+2\rho_{n}}{\lambda_{n}}
\leq 2^{-n},
\end{equation}
using~\eqref{eq:osc_gnj}, \eqref{eq:r_n:2} and~\eqref{eq:rho_n}. This verifies~\eqref{deriv_change_easy} for $k=n$. Moreover,~\eqref{eq:der_change_bound}, \eqref{lip} for $k=n-1$ and~\eqref{eq:sequences_Lk_ck} imply~\eqref{lip} for $t$ and $k=n$. To  complete the verification of~\eqref{lip}, note that for $t\in H_n\setminus\nullA_n$ we can find $j\in\set{1,\dots,p_n}$ such that $t\in H_{n,j}$, implying $\norm{\gamma_n'(t)}=\norm{\nu_{n,j}'(t)}=\norm{\gamma_{n-1}'(t_{n,j})}$ and finally apply~\eqref{lip} for $k=n-1$. Thus, all conditions~\eqref{lip}--\eqref{conv_der} hold for the objects of step $k=n$.

\paragraph{The limit curve $\gamma_{\infty}$.} 
By~\eqref{curve_conv_both} 
the sequence of mappings $(\gamma_{k})_{k=1}^{\infty}$ converges in the supremum norm to a 
mapping $\gamma_{\infty}\colon [0,1]\to\R^{d}$.
\begin{lemma}\label{lemma:gamma_inf_der}
		The limit curve $\gamma_{\infty}$ has the following properties:
		\begin{enumerate}[(i)]
			\item\label{gamma_inf_lip} The mapping $\gamma_{\infty}$ is Lipschitz with $\lip(\gamma_{\infty})\leq 2$.
			\item\label{gamma_inf_codomain} The mapping $\gamma_{\infty}$ may be viewed as a mapping $[0,1]\to (0,1)^{d}$, that is, with codomain $(0,1)^{d}$. 				
		\item\label{gamma_inf_deriv} For almost every $t\in[0,1],$ all mappings $\gamma_{k}$ with $k\in\N\cup\set{\infty}$ are differentiable at $t$ and there exists $m=m(t)\in\N$ such that $\gamma_{\infty}'(t)=\gamma_{k}'(t)$ for all $k\geq m$.
			\item\label{gamma_inf_lip_curve} For almost every $t\in [0,1]$, $\gamma_{\infty}$ is differentiable at $t$ with $\norm{\gamma_{\infty}'(t)}\geq 1/2$. Consequently $\gamma_{\infty}$ is a Lipschitz curve.			
		\end{enumerate}
\end{lemma}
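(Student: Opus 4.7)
\textbf{Plan for the proof of Lemma~\ref{lemma:gamma_inf_der}.} Parts~\eqref{gamma_inf_lip} and~\eqref{gamma_inf_codomain} are immediate consequences of the uniform convergence $\gamma_k \to \gamma_\infty$. For~\eqref{gamma_inf_lip}, condition~\eqref{lip} gives $\norm{\gamma_k'} \le L_k \le 2$ off the finite set $\nullA_k$, so each $\gamma_k$ is $2$-Lipschitz, and uniform limits preserve this constant. For~\eqref{gamma_inf_codomain}, telescoping~\eqref{curve_conv} together with $\psi_k \le \psi_{k-1}/2$ from~\eqref{psi_decay} gives $\norm{\gamma_\infty - \gamma_1}_\infty \le \sum_{k \ge 1} \psi_k/2 \le \psi_1$, and since $\overline{B}(\image(\gamma_1), \psi_1) \subseteq (0,1)^d$ by the base step of the construction, the image of $\gamma_\infty$ lies in $(0,1)^d$.

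The heart of the matter is~\eqref{gamma_inf_deriv}. The set $\nullA_\infty := \bigcup_k \nullA_k$ is countable by~\eqref{finite_increasing}, so a.e.\ $t$ lies in a $C^1$ component of every $\gamma_k$, and in particular every $\gamma_k$ is differentiable at such~$t$. Writing $G_k := \bigcup_j G_{k,j}$, inequality~\eqref{eq:measure_differ} gives $\leb(G_k) < \alpha_{k-1}/4$, and~\eqref{eq:alphak} makes the series $\sum_k \alpha_k$ convergent, so Borel--Cantelli furnishes a full-measure set on which every $t$ lies in only finitely many $G_k$. For such $t$, pick the smallest $m = m(t)$ with $t \notin G_k$ for all $k \ge m$. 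Since by construction $\gamma_n$ agrees with $\gamma_{n-1}$ off $\interior(G_n)$, on the open neighbourhood $[0,1] \setminus \bigcup_{j=m}^{k} G_j$ of~$t$ one has $\gamma_k = \gamma_{m-1}$; this yields $\gamma_k'(t) = \gamma_{m-1}'(t)$ for every $k \ge m-1$ and, evaluating at~$t$ and letting $k \to \infty$, $\gamma_\infty(t) = \gamma_{m-1}(t)$.

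To match $\gamma_\infty'(t)$ with this stable value, I will use that $\gamma_\infty$ is differentiable a.e.\ by Rademacher's theorem together with the Lebesgue density theorem. Since $\bigcup_{k \ge m} G_k$ has measure at most $\sum_{k \ge m}\alpha_{k-1}/4$, which tends to $0$ as $m \to \infty$, off a further null set every $t \notin \bigcup_{k \ge m} G_k$ is a density-one point of its complement, for every~$m$. For $t$ in the intersection of all the above full-measure sets, choose any sequence $s_n \to t$ with each $s_n \in [0,1] \setminus \bigcup_{k \ge m(t)} G_k$; the same telescoping argument applied at $s_n$ gives $\gamma_\infty(s_n) = \gamma_{m-1}(s_n)$, so
\[
\frac{\gamma_\infty(s_n) - \gamma_\infty(t)}{s_n - t} = \frac{\gamma_{m-1}(s_n) - \gamma_{m-1}(t)}{s_n - t} \xrightarrow{n \to \infty} \gamma_{m-1}'(t).
\]
Since $\gamma_\infty'(t)$ exists, it must coincide with this limit, proving~\eqref{gamma_inf_deriv}. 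Part~\eqref{gamma_inf_lip_curve} is then immediate: $\norm{\gamma_\infty'(t)} = \norm{\gamma_k'(t)} \ge c_k \ge 1/2$ by~\eqref{lip}. The one delicate step is the density argument in the third paragraph, which substitutes for the tempting but generally false claim that $\gamma_\infty$ coincides with $\gamma_{m-1}$ on an entire neighbourhood of~$t$.
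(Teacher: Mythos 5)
Your proposal is correct and follows essentially the same route as the paper: parts (i) and (ii) by uniform convergence and the telescoped bound from (D), and part (iii) by observing via (E) that the sets where consecutive curves differ have summable measure, so that almost every $t$ is a density point of a set on which $\gamma_\infty$ coincides with some fixed $\gamma_{m-1}$, whence the derivatives agree. The only (harmless) cosmetic difference is that you invoke Rademacher and identify $\gamma_\infty'(t)$ along a sequence in the complement of $\bigcup_{k\ge m}G_k$, whereas the paper phrases the same density-point comparison directly in terms of the sets $C_m=\bigcap_{n\ge m}\set{t\colon\gamma_\infty(t)=\gamma_n(t)}$.
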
	
	\begin{proof}
		Part~\eqref{gamma_inf_lip} is trivial, since $\gamma_{\infty}$ is the uniform limit of mappings $\gamma_{k}$, all of which satisfy $\lip(\gamma_{k})\leq 2$, by~\eqref{lip}. For~\eqref{gamma_inf_codomain}, observe that~\eqref{curve_conv_both} implies
		\begin{equation*}
		\lnorm{\infty}{\gamma_{\infty}-\gamma_{1}}\leq \sum_{k=1}^{\infty}\frac{\psi_{k}}{2}<\psi_{1}.
		\end{equation*} 
		Recall that $\psi_{1}>0$ was chosen sufficiently small so that $\overline{B}(\image(\gamma_{1}),\psi_{1})\subseteq (0,1)^{d}$. We conclude that $\image(\gamma_{\infty})\subseteq (0,1)^{d}$, that is, we may view $\gamma_{\infty}$ as a mapping $[0,1]\to (0,1)^{d}$.
		Moving on to~\eqref{gamma_inf_deriv}, we use~\eqref{measure_diff_both}
		to infer
		\begin{equation}\label{eq:gamma_neq_gamma_k}
	\leb\br{\bigcup_{n=m}^{\infty}B_{n}}
	\leq 
	\sum^{\infty}_{n=m}\frac{\alpha_{n}}{4}
\leq\frac{\alpha_m}{2},
	\qquad\text{with }B_{n}:=\set{t\in[0,1]\colon \gamma_{\infty}(t)\neq \gamma_{n}(t)},
	\end{equation}
		for all $m\geq 1$. Letting $C_{m}:=\bigcap_{n=m}^{\infty}B_{n}\comp\subseteq [0,1]$, we conclude that $\bigcup_{m=1}^{\infty}C_{m}$ has full measure in $[0,1]$. Moreover, for each $m\geq 1$ and almost every density point $t$ of $C_{m}$ we have that all mappings $\gamma_{k}$ with $k\in\N\cup\set{\infty}$ are differentiable at $t$ and $\gamma_{\infty}'(t)=\gamma_{n}'(t)$ for all $n\geq m$. The statement of~\eqref{gamma_inf_deriv} follows. Finally, note that part~\eqref{gamma_inf_lip_curve} follows immediately from~\eqref{gamma_inf_deriv} and~\eqref{lip}.
\end{proof}

Let
\begin{equation}\label{eq:def_finf_h}
\Finf:=\gamma_{\infty}^{-1}(\Fhidim)\cap \bigcap_{i=1}^{\infty}W_{i}\comp, \qquad H:=\bigcap_{n=1}^{\infty}\bigcup_{k=n}^{\infty}\interior(H_{k}).
\end{equation}
\begin{lemma}\label{lemma:Finf_H_properties}
	The sets $\Finf$ and $H$ have the following properties:
	\begin{enumerate}[(i)]
		\item\label{H_Gdelta} The set $H$ is $G_{\delta}$.
		\item\label{H_diff} The derivative $\gamma_k'(t)$ exists for every $t\in H$ and every $k\in \N$.
		\item\label{Finf_closed} The set $\Finf$ is closed.		
		\item\label{HcapF_preim} $H\cap \Finf\subseteq \gamma_{\infty}^{-1}(\A)$.	
		\item\label{every_portion_pos} The set $\Finf\subseteq[0,1]$ is non-empty and has every portion of positive Lebesgue measure.
		\item\label{Finf_in_Hkj}
		For every $k\in\N$ and every component $H_{k,j}$ of $H_{k}$ we have
		\begin{equation*}
		\interior(H_{k,j})\cap \Finf\neq \emptyset.
		\end{equation*}
		\item\label{relatively_res} 
		The set $H\cap\Finf$ is a relatively residual subset of $\Finf$.			
	\end{enumerate}
	\end{lemma}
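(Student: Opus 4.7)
The plan is to verify each of the seven claims in turn, leveraging conditions~\eqref{lip}--\eqref{conv_der} and the nested, pairwise-disjoint structure of the $H_{k,j}$'s. I would first dispatch the topological parts~\eqref{H_Gdelta}--\eqref{Finf_closed}. For~\eqref{H_Gdelta}, each $\bigcup_{k\ge n}\interior(H_k)$ is open, so $H$ is a countable intersection of open sets. For~\eqref{H_diff}, given $t\in H$ and any $k_0\in\N$, I would pick $\ell>k_0$ with $t\in\interior(H_\ell)$ and use~\eqref{H_k_2} to conclude $t\notin W_{k_0}$; since~\eqref{W_k} gives $\nullA_{k_0}\subseteq W_{k_0}\cup\{0,1\}$ and $\interior(H_\ell)\subseteq(0,1)$ for $\ell\ge 2$, this shows $t\notin\nullA_{k_0}$, and~\eqref{finite_increasing} produces $\gamma_{k_0}'(t)$. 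For~\eqref{Finf_closed}, given $t_n\to t$ with $t_n\in\Finf$, continuity together with the closedness of $\Fhidim$ gives $\gamma_\infty(t)\in\Fhidim$; if $t\in W_i$ with $i$ minimal, either $t\in\interior W_i$ (contradicting $t_n\in W_i\comp$) or $t\in\partial W_i\subseteq\nullA_{i+1}\subseteq\interior W_{i+1}\cup\{0,1\}$ by~\eqref{finite_increasing} and~\eqref{W_k}, and each alternative again yields a contradiction after noting that $W_1$ is a relative neighbourhood of both endpoints of $[0,1]$.

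For~\eqref{HcapF_preim}, given $t\in H\cap\Finf$, I already have $\gamma_\infty(t)\in\Fhidim$. Fixing $n$, I would pick $k\ge n$ with $t\in H_{k,j}$ and telescope~\eqref{curve_conv}--\eqref{psi_decay} to obtain $\norm{\gamma_\infty(t)-\gamma_k(t)}\le\psi_k$, so by~\eqref{H_k_3} the point $\gamma_\infty(t)$ lies in $U_{k,j}\subseteq O_k\subseteq O_n$; hence $\gamma_\infty(t)\in\Fhidim\cap\bigcap_n O_n\subseteq\A$.

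The central effort will be~\eqref{every_portion_pos}. My plan is first to prove $\leb(H_{k,j}\cap\Finf)>0$ for every $k\ge 2$ and $j$: from $\leb(H_{k,j}\cap\gamma_k^{-1}(\Fhidim\cap O_k))\ge\alpha_k$ in~\eqref{H_k_2}, I would subtract the measure of $\{t:\gamma_\infty(t)\ne\gamma_k(t)\}$, bounded by $\sum_{n>k}\alpha_{n-1}/4$ via~\eqref{eq:measure_differ}, and the measure of the residual throw-away intervals $\bigcup_{i\ge k}W_i$, bounded by $\sum_{i\ge k}2^{-i}\alpha_i$ via~\eqref{W_k}; the super-geometric decay $\alpha_i\le 2^{-i}\alpha_{i-1}$ from~\eqref{eq:alphak} makes both tails strictly smaller than $\alpha_k$, leaving a positive residue. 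In particular $k=2$ already yields $\Finf\ne\emptyset$. For the portion property itself, given an open $V$ and $t_0\in V\cap\Finf$, since $t_0$ avoids every $\nullA_i\supseteq\D_i$, for large $k$ the unique $I\in\P_k$ containing $t_0$ satisfies $I\subseteq V$. I would then argue $I$ must be one of the selected $I_{k,j}$'s: otherwise~\eqref{curve_changes} propagates by induction on dyadic sublevels to yield $\gamma_n=\gamma_{k-1}$ on $I$ for all $n\ge k-1$, hence $\gamma_\infty(t_0)=\gamma_{k-1}(t_0)\in\Fhidim$, contradicting the failure hypothesis. Thus $H_{k,j}\subseteq I\subseteq V$ and $\leb(V\cap\Finf)\ge\leb(H_{k,j}\cap\Finf)>0$.

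Finally,~\eqref{Finf_in_Hkj} is immediate since $\partial H_{k,j}\subseteq\nullA_k\subseteq W_k$ forces $H_{k,j}\cap\Finf\subseteq\interior(H_{k,j})$. For~\eqref{relatively_res}, I would decompose $\Finf\setminus H=\bigcup_n(\Finf\cap\bigcap_{k\ge n}\interior(H_k)\comp)$ as a countable union of relatively closed subsets of $\Finf$; to show each is nowhere dense I would revisit the argument of~\eqref{every_portion_pos}, which for any relatively open $U\cap\Finf$ containing some $t_0\in\Finf$ and any $n$ furnishes $k\ge n$ and some $j$ with $H_{k,j}\subseteq U$ and $\emptyset\ne H_{k,j}\cap\Finf\subseteq\interior(H_{k,j})\subseteq\interior(H_k)$. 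The main technical obstacle is the quantitative accounting in~\eqref{every_portion_pos}: the modification budget from~\eqref{eq:measure_differ}, the throw-away budget from~\eqref{W_k}, and the positive-measure target from~\eqref{H_k_2} all have to fit comfortably inside $\alpha_k$, which is exactly the purpose of the fast decay~\eqref{eq:alphak} engineered into the construction.
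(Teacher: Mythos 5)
Your proposal is correct and follows essentially the same route as the paper: the key claim that $I\cap\Finf\neq\emptyset$ forces $I\cap\gamma_{k-1}^{-1}(\Fhidim)\cap\bigcap_{i=1}^{k-1}W_{i}\comp\neq\emptyset$ via inductive propagation of~\eqref{curve_changes}, the use of~\eqref{H_k_1} to locate $H_{k,j}\subseteq I$, the measure accounting $\alpha_k$ minus the modification tail from~\eqref{eq:measure_differ} minus the throw-away tail from~\eqref{W_k}, and the density of $\Finf\cap\bigcup_{k\ge n}\interior(H_k)$ for~\eqref{relatively_res} all match the paper's argument. The only point to tighten is that what you need is that the \emph{sum} of the two tails is strictly below $\alpha_k$ (not merely each one separately); the decay~\eqref{eq:alphak} gives each tail at most roughly $\tfrac{5}{16}\alpha_k$ for $k\ge2$, so this indeed holds.
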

\begin{proof}
	The assertion~\eqref{H_Gdelta} for $H$ is obvious, and existence of $\gamma_k'(t)$ in~\eqref{H_diff} follows from~\eqref{finite_increasing},
		as $H\cap \bigcup_{i=1}^{\infty}\nullA_{i}\neq \emptyset$, by~\eqref{throw_away} and~\eqref{H_k_2}.
	To see that
	$\Finf$ is a closed subset of $[0,1]$ 
	we argue that $\bigcup_{i=1}^\infty W_i$ is a relatively open subset of $[0,1]$. Indeed, 
	by~\eqref{throw_away} we have that $\partial W_i\subseteq \nullA_{i+1}\subseteq \interior W_{i+1}\cup\{0,1\}$ for each $i\ge1$.
	Hence, as $0,1\in W_1$,
	\[
	\{0,1\}\cup\bigcup_{i=1}^\infty \interior W_i
	\subseteq
	\bigcup_{i=1}^\infty W_i
	\subseteq
	\{0,1\}\cup\bigcup_{i=1}^\infty \interior W_i.
	\]
	It remains to note that 
	\[
	\{0,1\}\cup\bigcup_{i=1}^\infty\interior W_i
	=\left(\{0,1\}\cup\interior W_1\right)
	\cup
	\bigcup_{i=2}^\infty \interior W_i
	\]
	is a union of relatively open sets in $[0,1]$. This proves~\eqref{Finf_closed}.
	
	For~\eqref{HcapF_preim}, it suffices to show that $H\cap\Finf\subseteq \gamma_{\infty}^{-1}\br{\bigcap_{n=1}^{\infty}O_{n}\cap \Fhidim}$. Fix $t\in H\cap\Finf$ and $n\in\N$. Since $t\in\Finf$ we have $\gamma_{\infty}(t)\in \Fhidim$. Since $t\in H$, we may choose $k\geq n$ such that $t\in\interior(H_{k})$. Now conditions~\eqref{curve_conv_both} and~\eqref{H_k_3} guarantee that 
	\begin{equation*}
	\gamma_{\infty}(t)=\lim_{l\to\infty}\gamma_{l}(t)\in \overline B(\gamma_{k}(t),\psi_{k})\subseteq O_{k}\subseteq O_{n}.
	\end{equation*}
	Hence $\gamma_{\infty}(t)\in O_{n}\cap \Fhidim$.
	
	Finally, we prove~\eqref{every_portion_pos}, \eqref{Finf_in_Hkj} and~\eqref{relatively_res} simultaneuously. By~\eqref{throw_away}, the set $\Finf$ contains no dyadic numbers. Therefore, it suffices to verify the `every portion of positive measure' condition of~\eqref{every_portion_pos} on all intervals $I\in\P_{k}$ for all $k\geq 2$. Further, to prove~\eqref{Finf_in_Hkj} we may assume that $k\geq 2$, since $H_{1,1}=[0,1]$ is the only component of $H_{1}$ and contains all other $H_{k,j}$. Let $k\geq 2$ and $I\in\P_{k}$ be such that $I\cap \Finf\neq\emptyset$. We claim that
	\begin{equation}\label{eq:claim}
	I\cap\gamma_{k-1}^{-1}(\Fhidim)\cap\bigcap_{i=1}^{k-1}W_{i}\comp\neq\emptyset.
	\end{equation}
	Otherwise, applying~\eqref{curve_changes} inductively for $k'\geq k$ yields that $\gamma_{\infty}|_{I}=\gamma_{k'}|_{I}=\gamma_{k-1}|_{I}$ for all $k'\geq k$. But this implies
	\begin{equation*}
	I\cap \Finf=I\cap \gamma_{\infty}^{-1}(\Fhidim)\cap \bigcap_{i=1}^{\infty}W_{i}\comp\subseteq I\cap\gamma_{k-1}^{-1}(\Fhidim)\cap\bigcap_{i=1}^{k-1}W_{i}\comp=\emptyset,
	\end{equation*}
	contrary to our assumption. This proves~\eqref{eq:claim}. By~\eqref{H_k_1} there exists $j_{0}\in\set{1,\ldots,p_{k}}$ with $H_{k,j_{0}}\subseteq I$. For the proof of~\eqref{Finf_in_Hkj} we write the next part of the argument for an arbitrary, fixed $j\in\set{1,\ldots,p_{k}}$. By~\eqref{H_k_2} we have
	\begin{equation*}
	H_{k,j}\subseteq \bigcap_{i=1}^{k-1}W_{i}\comp
	\text{ and }
	\lebm{H_{k,j}\cap\gamma_{k}^{-1}(\Fhidim\cap O_{k})}\geq \alpha_{k}.
	\end{equation*}
	Applying~\eqref{eq:gamma_neq_gamma_k}, we infer
	\begin{equation*}
	\lebm{H_{k,j}\cap\gamma_{\infty}^{-1}(\Fhidim\cap O_{k})}\geq \frac{\alpha_{k}}{2}.
	\end{equation*}
	Finally we apply~\eqref{W_k} and~\eqref{eq:alphak} to derive
	\begin{equation*}
	\lebm{H_{k,j}\cap\gamma_{\infty}^{-1}(\Fhidim\cap O_{k})\cap \bigcap_{i=1}^{\infty}W_{i}\comp}\geq \frac{\alpha_{k}}{2}-\sum_{i=k}^{\infty}\leb(W_{i})\geq \frac{\alpha_{k}}{8}>0,
	\end{equation*}
	which implies
	\begin{equation}\label{eq:pos_measure}
	\leb\left(\Finf\cap \interior(H_{k,j})\right)>0.
	\end{equation}
	This proves~\eqref{Finf_in_Hkj}. Since $k\geq 2$ and $I\in \P_{k}$ were arbitrary and $H_{k,j_{0}}\subseteq I$, taking $j=j_{0}$ in~\eqref{eq:pos_measure} verifies~\eqref{every_portion_pos} and further proves that the sets $\bigcup_{i=k}^{\infty}\interior(H_{i})\cap \Finf$ are dense in $\Finf$ for all $k\in\N$. Hence~\eqref{relatively_res} is also verified.
\end{proof}	
	
For each $t\in H$, let $(k_n(t))_{n=1}^{\infty}$ be the increasing sequence of positive integers such that $t\in\interior(H_{k})$ if and only if $k\in\{k_n(t)\colon n\ge1\}$. In other words, setting $k_0(t)=0$, we let 
	\begin{equation}\label{def-kn}
	k_{n}(t):=\min\set{k>k_{n-1}(t)\colon t\in \interior(H_{k})}, \qquad t\in H, n\geq 1.
	\end{equation} 
In places where the relevant point $t\in H$ is clear, we often shorten $k_n(t)$ to $k_n$. 
\begin{remark}\label{rem:kn}
Recall, from~\eqref{H_k_2a}, that any two components $H_{k,j}$, $H_{l,i}$ of $H_{k}$ and $H_{l}$ respectively with $k\neq l$ are either pairwise disjoint or strongly nested in the sense that one is contained in the interior of the other.  
	This implies the following additional property, which we will use later on:
	if $t\in H$, $k_{n}:=k_{n}(t)$ for $n\geq 1$ and $s\in\interior(H_{k_{m},j_{m}})$ for some $m\ge1$, then	
	\begin{equation}
	\label{rem:kn:s} 
k_n(s)=k_n(t)=k_n
\text{ for all }
1\le n\le m.
	\end{equation}
\end{remark}
Let $t\in H$ and $k_n=k_n(t)$. By~\eqref{beta_increasing} we have that $\beta_{k_{n}}(t)>\beta_{k_{n-1}}(t)$ for each $n\ge2$. This implies that for each fixed $m\ge1$, the sequence $\beta_{k_{n}}(t)|_m$ eventually becomes  constant. Define the infinite sequence $\beta(t)=(i_m)_{m=1}^{\infty}$ by the condition 
\begin{equation}\label{def-B}
\beta(t)|_m=\lim_{n\to\infty}\beta_{k_{n}}(t)|_m
\qquad 
\text{ for each }m\ge1,
\text{ where }t\in H.	
\end{equation}
Note for future reference that 
\begin{equation}\label{eq:beta-incr}
\beta_{k_{n}}(t)|_m\le \beta(t)|_m
\qquad\text{ for all }n,m\ge1,
\text{ where }t\in H.
\end{equation}	

Recall from 
Lemma~\ref{lemma:Finf_H_properties}, part~\eqref{H_diff}, that for each $t\in H$ and $k\in\N$ the derivative $\gamma_k'(t)$ exists.
The next lemma gives an estimate of how close the derivatives of $\gamma_k$ on $H$ are in terms of the function $\metric$ defined in~\eqref{eq:metric}. 
\begin{lemma}\label{lemma:cauchy_bound_der}
	Let $t\in H$, see~\eqref{eq:def_finf_h},  and $k_{n}:=k_{n}(t)$ be defined according to~\eqref{def-kn}. Let $k_1\le k\leq l$ and let $p,q\ge1$ be maximal such that $k_{p}\leq k$ and $k_{q}\leq l$.
	 Then
	\begin{equation*}
	\norm{\gamma_{l}'(t)-\gamma_{k}'(t)}\leq 2\metric(\beta_{k_{q}}(t),\beta_{k_{p}}(t))+7\cdot 2^{-k_{p}}.
	\end{equation*}
\end{lemma}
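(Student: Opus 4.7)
The plan is to telescope $\gamma_l'(t) - \gamma_k'(t)$ from $k$ to $l$, separating the ``tricky'' indices $j = k_n(t)$ (where only~\eqref{deriv_change_tricky_same_size} and~\eqref{deriv_change_tricky_compass} apply) from the remaining ``easy'' indices (where~\eqref{deriv_change_easy} gives the direct bound $\norm{\gamma_j'(t) - \gamma_{j-1}'(t)} \le 2^{-j}$). The preliminary observation is that $t \in H$ forces $t \notin \nullA_j$ for every $j$: indeed, $t \in \interior(H_l)$ for infinitely many $l$, and~\eqref{H_k_2} gives $H_l \subseteq \bigcap_{i=1}^{l-1} W_i\comp$ while~\eqref{W_k} gives $\nullA_j \subseteq W_j$, so picking $l > j$ yields $t \notin W_j \supseteq \nullA_j$. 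In particular $t \notin \partial H_j \subseteq \nullA_j$, so for $j \notin \{k_n(t)\}$ we have $t \notin H_j$ and the easy estimate applies at step $j$.

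Next I split
\[
\gamma_l'(t) - \gamma_k'(t) = \bigl[\gamma_l'(t) - \gamma_{k_q}'(t)\bigr] + \bigl[\gamma_{k_q}'(t) - \gamma_{k_p}'(t)\bigr] + \bigl[\gamma_{k_p}'(t) - \gamma_k'(t)\bigr].
\]
By maximality of $p$ and $q$ the intervals $(k_p, k]$ and $(k_q, l]$ contain only easy indices, so each outer bracket is bounded by a geometric sum of $2^{-j}$'s and is at most $2^{-k_p}$. All tricky indices $k_{p+1}, \dots, k_q$ are concentrated in the middle bracket, and to handle them I use the polar decomposition $\gamma_m'(t) = \norm{\gamma_m'(t)}\,\theta(\gamma_m'(t))$ for $m = k_p, k_q$ to separate magnitude from direction.

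The magnitude difference $\bigl|\norm{\gamma_{k_q}'(t)} - \norm{\gamma_{k_p}'(t)}\bigr|$ telescopes across all $j \in (k_p, k_q]$ with uniform bound $2^{-j}$ at each step (via either~\eqref{deriv_change_easy} or~\eqref{deriv_change_tricky_same_size}), summing to at most $2^{-k_p}$. The direction difference is where $\metric$ enters:~\eqref{deriv_change_tricky_compass} applied at the tricky indices $k_p$ and $k_q$ places $\theta(\gamma_{k_p}'(t))$ and $\theta(\gamma_{k_q}'(t))$ within $2^{-k_p}$ and $2^{-k_q}$ of $S_{\beta_{k_p}(t)}$ and $S_{\beta_{k_q}(t)}$ respectively, so the triangle inequality together with~\eqref{eq:metric} yields $\norm{\theta(\gamma_{k_q}'(t)) - \theta(\gamma_{k_p}'(t))} \le \metric(\beta_{k_q}(t), \beta_{k_p}(t)) + 2 \cdot 2^{-k_p}$. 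Multiplying by $\norm{\gamma_{k_q}'(t)} \le L_{k_q} \le 2$ from~\eqref{lip} bounds the middle bracket by $2\,\metric(\beta_{k_q}(t), \beta_{k_p}(t)) + 5 \cdot 2^{-k_p}$, and adding the three brackets delivers the claim with constant $7$. The only subtle step is the direction estimate: since~\eqref{deriv_change_tricky_compass} only controls $\theta(\gamma_j'(t))$ at tricky $j$, one cannot telescope directionally across each tricky step; instead one compares directions only at the endpoints $k_p$ and $k_q$ of the tricky range, which is precisely why the magnitude/direction decomposition is needed.
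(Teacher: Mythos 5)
Your proof is correct and follows essentially the same route as the paper's: split $\gamma_l'(t)-\gamma_k'(t)$ into the two "easy" outer brackets controlled by~\eqref{deriv_change_easy} and the middle bracket $\gamma_{k_q}'(t)-\gamma_{k_p}'(t)$, which is handled by separating magnitude (telescoping via~\eqref{deriv_change_easy}/\eqref{deriv_change_tricky_same_size}) from direction (comparing $\theta(\gamma_{k_p}'(t))$ and $\theta(\gamma_{k_q}'(t))$ via~\eqref{deriv_change_tricky_compass} and the definition of $\metric$), with the same constants $2\metric+5\cdot2^{-k_p}$ for the middle term and $7\cdot2^{-k_p}$ overall. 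Your preliminary check that $t\in H$ implies $t\notin \nullA_j$ for all $j$ (so that conditions~\eqref{deriv_change_easy}--\eqref{deriv_change_tricky_compass} genuinely apply) is a point the paper leaves implicit, and is a welcome addition.
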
 
\begin{proof}
Clearly $1\le k_p\le k_q$.
	By~\eqref{deriv_change_easy} we have
	\begin{equation*}
	\norm{\gamma_{l}'(t)-\gamma_{k_{q}}'(t)}\leq \sum_{m=k_{q}+1}^{l}2^{-m}\leq 2^{-k_{q}}\leq 2^{-k_{p}},
	\end{equation*}
	and similarly
	\begin{equation*}
	\norm{\gamma_{k}'(t)-\gamma_{k_{p}}'(t)}\leq 2^{-k_{p}}.
	\end{equation*}
	To obtain an estimate for $\norm{\gamma_{k_{q}}'(t)-\gamma_{k_{p}}'(t)}$, we compare separately the magnitudes and directions of these vectors. By~\eqref{deriv_change_easy} and~\eqref{deriv_change_tricky_same_size} the magnitudes differ by
	\begin{equation*}
	\Biggl|{\norm{\gamma_{k_{q}}'(t)}-\norm{\gamma_{k_{p}}'(t)}}\Biggr|
	\leq \sum_{m=k_{p}+1}^{k_{q}}2^{-m}\leq2^{-k_{p}},
	\end{equation*}
	and with~\eqref{deriv_change_tricky_compass} we can bound the difference of directions by
	\begin{equation*}
	\norm{\theta(\gamma_{k_{q}}'(t))-\theta(\gamma_{k_{p}}'(t))}\leq \metric(\beta_{k_{q}}(t),\beta_{k_{p}}(t))+2\cdot 2^{-k_{p}}.
	\end{equation*}
	Combining the last two inequalities and using that $\norm{\gamma_{n}'(t)}\leq 2$, from~\eqref{lip}, for all $n\ge1$ we deduce
	\begin{equation*}
	\norm{\gamma_{k_{q}}'(t)-\gamma_{k_{p}}'(t)}\leq 2^{-k_{p}}+2\left(\metric(\beta_{k_{q}},\beta_{k_{p}}(t))+2\cdot 2^{-k_{p}}\right)
	\end{equation*}
	The inequality of Lemma~\ref{lemma:cauchy_bound_der} now follows by the triangle inequality.
\end{proof}
The previous lemma enables us to establish convergence of the derivatives $\gamma_{k}'(t)$ at points $t\in H$.
\begin{lemma}\label{lemma:convergence_derivatives} 
Let $t\in H$. Then the sequence $(\gamma_{k}'(t))_{k=1}^{\infty}$ converges and 
\begin{equation}
\theta\left(\lim_{k\to\infty}\gamma_{k}'(t)\right)=\bigcap_{n\ge1} S_{\beta(t)|_n},
\end{equation}
where $\displaystyle\beta(t)=\lim_{n\to\infty} \beta_{k_{n}}(t)$ and
$k_n=k_n(t)$ are defined in~\eqref{def-B} and~\eqref{def-kn} respectively.
\end{lemma}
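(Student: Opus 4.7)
The plan is to combine the explicit Cauchy estimate of Lemma~\ref{lemma:cauchy_bound_der} with the stabilisation property of $\beta_{k_n}(t)|_m$ encoded in~\eqref{def-B} and~\eqref{eq:beta-incr}. Fix $t\in H$ and write $k_n=k_n(t)$, $\beta:=\beta(t)$. Given $\eps>0$, I first pick $m\in\N$ so that the Euclidean diameter of $S_{\beta|_m}$ is less than $\eps/4$; this is possible because $\metric(\beta',\beta')\to 0$ as $|\beta'|\to\infty$, as noted just after~\eqref{eq:metric}. By~\eqref{def-B} and~\eqref{eq:beta-incr} I can then choose $n_0\in\N$ large enough so that both $\beta_{k_n}(t)|_m=\beta|_m$ for every $n\ge n_0$ and $7\cdot 2^{-k_{n_0}}<\eps/2$. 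Now for $k,l\ge k_{n_0}$ the integers $p,q$ of Lemma~\ref{lemma:cauchy_bound_der} satisfy $p,q\ge n_0$, so $S_{\beta_{k_p}(t)}$ and $S_{\beta_{k_q}(t)}$ are both contained in $S_{\beta|_m}$, giving $\metric(\beta_{k_q}(t),\beta_{k_p}(t))\le \diam S_{\beta|_m}<\eps/4$. Lemma~\ref{lemma:cauchy_bound_der} then yields $\norm{\gamma_l'(t)-\gamma_k'(t)}<\eps$, proving that $(\gamma_k'(t))$ is Cauchy and hence convergent to some $v\in\R^d$.

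To identify $\theta(v)$, I first note that $\norm{v}\ge c_\infty\ge 1/2$ by~\eqref{lip}, so $\theta(v)$ is well defined and continuity of $\theta$ on $\R^d\setminus\{0\}$ gives $\theta(\gamma_{k_n}'(t))\to\theta(v)$. Fix any $m\ge 1$ and choose $n_1\ge n_0$ with $\beta_{k_n}(t)|_m=\beta|_m$ and $2^{-k_n}<\eps$ for every $n\ge n_1$. By~\eqref{deriv_change_tricky_compass},
\begin{equation*}
\theta(\gamma_{k_n}'(t))\in B(S_{\beta_{k_n}(t)},2^{-k_n})\subseteq B(S_{\beta|_m},\eps).
\end{equation*}
Letting $n\to\infty$ and then $\eps\to 0$, and using that $S_{\beta|_m}=Q_{\beta|_m}\cap\Sphere^{d-1}$ is closed in $\Sphere^{d-1}$, I conclude $\theta(v)\in S_{\beta|_m}$. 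Since $m$ was arbitrary, $\theta(v)\in\bigcap_{n\ge 1}S_{\beta|_n}$. Conversely, the sets $S_{\beta|_n}$ form a nested sequence of compact sets whose diameters tend to zero, so the intersection contains at most one point; combined with the previous inclusion this forces equality $\{\theta(v)\}=\bigcap_{n\ge 1}S_{\beta|_n}$, as claimed.

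The main (but mild) subtlety is the stabilisation step: although $\beta_{k_n}(t)$ itself need not converge as a tuple (its length grows with $n$), any truncation $\beta_{k_n}(t)|_m$ is eventually constant by~\eqref{beta_increasing}, which is exactly what is needed to apply Lemma~\ref{lemma:cauchy_bound_der} effectively and to feed the nested structure of $\{S_{\beta|_m}\}_m$ into the identification of the limit direction. Once this is unpacked, the rest is a straightforward combination of~\eqref{lip}, \eqref{deriv_change_tricky_compass} and the geometric fact that the cells $S_{\beta|_m}$ form a shrinking filtration of $\Sphere^{d-1}$ around $\theta(v)$.
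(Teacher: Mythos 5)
Your proposal is correct and follows essentially the same route as the paper's proof: the Cauchy property comes from Lemma~\ref{lemma:cauchy_bound_der} together with the eventual stabilisation of the truncations $\beta_{k_n}(t)|_m$, and the limit direction is located via~\eqref{deriv_change_tricky_compass} and the shrinking cells $S_{\beta(t)|_m}$. The only (harmless) additions are your explicit check that $\norm{v}\ge 1/2$ so that $\theta(v)$ is defined, and the remark that the nested intersection is a singleton, both of which the paper leaves implicit.
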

\begin{proof}
Given $\varepsilon>0$ choose $M\in\N$ such that $2^{-M+2}\sqrt{d}<\eps$, 
i.e.\ the
diameter $\metric(\beta,\beta)$ of any $S_\beta$ with $\beta\in T_{n}$, $n\ge M$, is less than $\eps$ (see Definition~\ref{def:partition}). Let $N>M$ be such that
for any $n\ge N$, it holds that
$\beta_{k_n}(t)|_M=\beta(t)|_M$.

Given $l> k\geq k_{N}$ we choose $p,q\in\N$ maximal so that $k_{p}\leq k$ and $k_{q}\leq l$. Then, by Lemma~\ref{lemma:cauchy_bound_der}, we have
\begin{equation*}
\norm{\gamma_{l}'(t)-\gamma_{k}'(t)}\leq 2\metric(\beta_{k_{q}}(t),\beta_{k_{p}}(t))+7\cdot 2^{-k_{p}}<2\varepsilon+7\varepsilon.
\end{equation*}	
Here we used that $p,q\ge N$ to deduce
$\beta_{k_p}(t)|_M=\beta_{k_q}(t)|_M=\beta(t)|_M$, and subsequently
$S_{\beta_{k_p}(t)},S_{\beta_{k_q}(t)}\subseteq S_{\beta(t)|_M}$. Hence
$\metric(\beta_{k_p}(t),\beta_{k_q}(t))\le \metric(S_{\beta(t)|_M},S_{\beta(t)|_M})<\eps$. We also used $2^{-k_{p}}\leq 2^{-k_{N}}\leq 2^{-N}<\varepsilon$.

We thus conclude that $(\gamma_{k}'(t))_{k=1}^{\infty}$ is a Cauchy sequence and hence converges. Moreover, for any $p\geq N$ we have, by~\eqref{deriv_change_tricky_compass}, that $\theta(\gamma_{k_{p}}'(t))\in B(S_{\beta_{k_{p}}(t)},2^{-k_{p}})\subseteq B(S_{\beta(t)|_{M}},2^{-k_{p}})$. Letting $p\to\infty$ we conclude that the vector $x:=\theta(\lim_{k\to\infty}\gamma_{k}'(t))$ belongs to $S_{\beta(t)|_{M}}$. Since $M\in\N$ could have been chosen arbitrarily large, this proves $x\in \bigcap_{n=1}^{\infty} S_{\beta(t)|_{n}}$. It is clear that the latter has diameter $0$, thus the statement of the lemma follows.
\end{proof}
For each $k\ge1$, let us recall~\eqref{eq:def_finf_h} and define
	\begin{equation}\label{def:Omegak}
	\Omega_{k}:=\set{t\in H\colon \exists\sigma=\sigma(t,k)>0\text{ s.t. }\beta(s)|_{k}\leq \beta(t)|_{k}\text{ for all $s\in [t-\sigma,t+\sigma]\cap H$}}
	\end{equation} 
and
\begin{equation}\label{eq:Omega_Einf}
\Einf:=\bigcap_{k=1}^{\infty}\Omega_{k}.
\end{equation}
We now show that each of the sets $\Omega_k$ is non-empty and moreover, that each $\Omega_{k}\cap\Finf$ contains a relatively open and dense subset of $H\cap\Finf$. 
Together with Lemma~\ref{lemma:Finf_H_properties}\eqref{relatively_res}
this will imply that $\Einf\cap \Finf$ is relatively residual in $\Finf$.

\begin{lemma}\label{lemma:Omega_k_Einf_prop}	
	The sets $\Omega_{k}$, $\Einf$ and $\Finf$ defined in~\eqref{def:Omegak}, \eqref{eq:Omega_Einf} and \eqref{eq:def_finf_h} have the following properties:
		\begin{enumerate}[(i)]
			\item\label{Omegak_rel_open_dense} For each $k\ge1$, the set $\Omega_{k}\cap \Finf$ contains a relatively open and dense subset of $H\cap \Finf$.
			\item\label{Einf_rel_res} The set $\Einf\cap \Finf$ is a non-empty, relatively residual subset of $\Finf$.
		\end{enumerate}		
	\end{lemma}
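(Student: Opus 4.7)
My plan is to construct, for each fixed $k \ge 1$, an explicit relatively open and dense subset $W_k$ of $H \cap \Finf$ that is contained in $\Omega_k$, and then apply the Baire category theorem in the complete metric space $\Finf$ to deduce~\eqref{Einf_rel_res} from~\eqref{Omegak_rel_open_dense}.

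Set
\[
W_k := \{t \in H \cap \Finf \colon \beta(\cdot)|_k \text{ is constant on } (t - \sigma, t + \sigma) \cap H \text{ for some } \sigma > 0\}.
\]
The inclusion $W_k \subseteq \Omega_k \cap \Finf$ is immediate, and a routine shrinking argument shows $W_k = \tilde W_k \cap H \cap \Finf$ for some open $\tilde W_k \subseteq [0,1]$, so $W_k$ is relatively open in $H \cap \Finf$. The main step is the density of $W_k$ in $H \cap \Finf$. Given an open $\tilde U \subseteq [0,1]$ with $\tilde U \cap H \cap \Finf \ne \emptyset$, the finiteness of $T_k$ (Definition~\ref{def:partition}) allows me to set $v^* := \max\{\beta(s)|_k \colon s \in \tilde U \cap H\}$, attained at some $t_0 \in \tilde U \cap H$. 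The sequence $\beta_{k_n(t_0)}(t_0)|_k$ is non-decreasing in $n$ by~\eqref{beta_increasing} and converges to $\beta(t_0)|_k = v^*$, while $\rho_{k_n(t_0)} \to 0$; thus I can fix $n$ so large that $\beta_{k_n(t_0)}(t_0)|_k = v^*$ and $I := \interior(H_{k_n(t_0), j_n(t_0)}) \subseteq \tilde U$, where $j_n(t_0)$ is the unique index $j$ with $t_0 \in \interior(H_{k_n(t_0), j})$.

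By Lemma~\ref{lemma:Finf_H_properties}~\eqref{Finf_in_Hkj}, $I \cap \Finf$ is a nonempty open subset of the Baire space $\Finf$, while by Lemma~\ref{lemma:Finf_H_properties}~\eqref{relatively_res} the set $H \cap \Finf$ is residual in $\Finf$; consequently $I \cap H \cap \Finf \ne \emptyset$, and I fix a point $t$ in it. For any $s \in I \cap H$, Remark~\ref{rem:kn} yields $k_n(s) = k_n(t_0)$; disjointness of the components of $H_{k_n(t_0)}$ forces $j_n(s) = j_n(t_0)$, so condition~\eqref{H_k_4} gives $\beta_{k_n(s)}(s) = \beta_{k_n(t_0)}(t_0)$ and hence $\beta_{k_n(s)}(s)|_k = v^*$. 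Combining~\eqref{eq:beta-incr} with the maximality of $v^*$ over $\tilde U \cap H \supseteq I \cap H$ forces $\beta(s)|_k = v^*$ for all such $s$. Thus $\beta(\cdot)|_k \equiv v^*$ on $I \cap H$, certifying $t \in W_k$ and completing the proof of~\eqref{Omegak_rel_open_dense}.

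For~\eqref{Einf_rel_res}, density of $W_k$ in $H \cap \Finf$ (itself dense in $\Finf$ as a residual set) implies that $\tilde W_k \cap \Finf$ is open and dense in the complete metric space $\Finf$, so by Baire's theorem $\bigcap_k (\tilde W_k \cap \Finf)$ is residual in $\Finf$. Intersecting with the residual set $H \cap \Finf$ preserves residuality, hence $\bigcap_k W_k$ is residual in $\Finf$; the inclusion $\bigcap_k W_k \subseteq \bigcap_k \Omega_k = \Einf$ then makes $\Einf \cap \Finf$ residual in $\Finf$, and non-empty by the Baire property since $\Finf \ne \emptyset$ (Lemma~\ref{lemma:Finf_H_properties}~\eqref{every_portion_pos}). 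The only delicate step is the middle paragraph: squeezing $\beta(s)|_k$ between the lower bound coming from the nesting structure (Remark~\ref{rem:kn} plus~\eqref{eq:beta-incr}) and the upper bound from the maximality of $v^*$; everything else is standard bookkeeping.
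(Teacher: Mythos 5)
Your proposal is correct and follows essentially the same argument as the paper: part (i) via maximising the finitely-valued function $\beta(\cdot)|_{k}$ over $\tilde U\cap H$, passing to the interior of a small component $H_{k_{n},j_{n}}$, and squeezing $\beta(s)|_{k}$ between the lower bound from Remark~\ref{rem:kn} together with~\eqref{H_k_4} and~\eqref{eq:beta-incr} and the upper bound from maximality; part (ii) is the same category argument (the paper invokes the general ``residual of a residual is residual'' fact where you unwind it via Baire with the explicit open dense sets $\tilde W_{k}\cap\Finf$, a cosmetic difference only).
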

	\begin{proof}
		A subset $R$ of a topological space $X$ contains an open, dense set if and only if $R$ intersects every non-empty, open set in a set of non-empty interior. We prove part~\eqref{Omegak_rel_open_dense} by verifying this equivalent condition for the sets $R=\Omega_{k}\cap \Finf$ and topological space $X=H\cap \Finf$ with the subspace topology inherited from $[0,1]$. Thus, fixing $k\geq 1$ and an open interval $U\subseteq\R$ with $U\cap H\cap \Finf\neq\emptyset$, our task is to find an open interval $V\subseteq U$ such that 
		\begin{equation}\label{eq:V_condition}
		\emptyset\neq V\cap H\cap\Finf\subseteq \Omega_{k}\cap \Finf.
		\end{equation}
		Since $U\cap H\ne\emptyset$, the
		set
		\begin{equation*}
		\set{\beta(r)|_{k}\colon r\in U\cap H}
		\end{equation*}
		is a finite, non-empty set. Therefore, there exists $t\in U\cap H$ such that
		\begin{equation}\label{eq:beta_max}
		\beta(t)|_{k}=\max\set{\beta(r)|_{k}\colon r\in U\cap H}.
		\end{equation}
		Note that a priori we do not know whether $t$ belongs to $\Finf$. Let $k_{n}=k_n(t)$ be defined by~\eqref{def-kn}.
		We then have $\beta(t)=\lim_{n\to\infty}\beta_{k_{n}}(t)$, see~\eqref{def-B}. Therefore, we may choose $n_{0}\in\N$ large enough so that $\beta_{k_{n}}(t)|_{k}=\beta(t)|_k$ is constant for all $n\geq n_{0}$. Fix $n\geq n_{0}$ and consider the component $H_{k_{n},j_{n}}$ of $H_{k_{n}}$ containing $t$. We additionally take $n$ sufficiently large so that $H_{k_{n},j_{n}}\subseteq U$. Now we seek to verify~\eqref{eq:V_condition} for $V:=\interior(H_{k_{n},j_{n}})\subseteq U$.
		First note that the set $V\cap H\cap \Finf$ is non-empty: By Lemma~\ref{lemma:Finf_H_properties}, part~\eqref{Finf_in_Hkj}, the set $V\cap \Finf$ is a non-empty, relatively open subset of $\Finf$. Therefore, by Lemma~\ref{lemma:Finf_H_properties}, part~\eqref{relatively_res}, it has non-empty intersection with $H$.	
		Let $s\in V\cap H\cap \Finf$. Then, by Remark~\ref{rem:kn}, we have $k_{i}(s)=k_{i}(t)=k_{i}$ for $1\leq i\leq n$. 
		Hence, using	
		\eqref{eq:beta-incr} and the choice~\eqref{eq:beta_max} of $t$ we get
		\begin{equation*}
		\beta(s)|_{k}\geq \beta_{k_{n}}(s)|_{k}=\beta_{k_{n}}(t)|_{k}=\beta(t)|_{k}\geq \beta(s)|_{k}.
		\end{equation*}
		We conclude that $\beta(s)|_{k}=\beta(t)|_{k}$. Taking $\sigma=\sigma(s,k)>0$ sufficiently small so that $[s-\sigma,s+\sigma]\subseteq U$ and using~\eqref{eq:beta_max}, we verify that $s\in \Omega_{k}$. Hence $s\in \Omega_{k}\cap \Finf$.	
		
		We turn our attention now to part~\eqref{Einf_rel_res}. From part~\eqref{Omegak_rel_open_dense} it follows that $\Einf\cap\Finf(:=Z)$ is a relatively residual subset of $H\cap \Finf(:=Y)$. Recall in addition, that $H\cap \Finf$ is a relatively residual subset of $\Finf(:=X)$ and that $\Finf$ is closed (Lemma~\ref{lemma:Finf_H_properties}, parts~\eqref{relatively_res} and~\eqref{Finf_closed}), thus a Baire space in its own right. Therefore, to prove~\eqref{Einf_rel_res}, it suffices to recall the following general topological statement, which may be verified easily using \cite[\S10~IV~Theorem~1]{MR1296876}. \newline \emph{Let $X$ be a topological space, $Y\subseteq X$ be a residual subset of $X$ and $Z\subseteq Y$ be a relatively residual subset of $Y$. Then $Z$ is a residual subset of $X$.}
\end{proof}
We are now ready to make an important step and verify that the limit curve $\gamma_\infty=\lim\gamma_k$ is differentiable everywhere in $E_\infty$~\eqref{eq:Omega_Einf}, and its derivative is the limit of derivatives of $\gamma_k$.
\begin{lemma}\label{lemma:differentiable}
 	Let $t\in\Einf$. Then the Lipschitz curve $\gamma_{\infty}$ is differentiable at $t$ with
 	\begin{equation*}
 	\gamma_{\infty}'(t)=\lim_{k\to\infty}\gamma_{k}'(t).
 	\end{equation*}
 	Moreover, we have
 	\begin{equation*}
 	\lim_{\delta\to 0}\osci{\gamma_\infty'}{[t-\delta,t+\delta]}=0.
 	\end{equation*}
 \end{lemma}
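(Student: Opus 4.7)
The plan is to deduce both conclusions from the single pointwise claim: given $\eps>0$, there is $\sigma>0$ such that $\norm{\gamma_\infty'(r)-v}<\eps$ for almost every $r\in[t-\sigma,t+\sigma]$, where $v:=\lim_k\gamma_k'(t)$ (existing by Lemma~\ref{lemma:convergence_derivatives}). Differentiability of $\gamma_\infty$ at $t$ with derivative $v$ follows by writing $\gamma_\infty(s)-\gamma_\infty(t)=\int_t^s\gamma_\infty'(r)\,dr$, valid because $\gamma_\infty$ is Lipschitz, and the oscillation statement is then immediate from the triangle inequality applied to almost every pair $r_1,r_2\in[t-\delta,t+\delta]$.

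To set up the pointwise estimate, I would first pick $M$ large enough that $\metric(\beta,\beta)<\eps$ for every $\beta\in T_M$, and use $t\in\Omega_M$ to fix the corresponding radius $\sigma_M$ from~\eqref{def:Omegak}. Then I would choose $N$ so that $\beta_{k_N(t)}(t)|_M=\beta(t)|_M$, $2^{-k_N(t)}<\eps$, and $\sup_{l\ge k_N(t)}\rho_l<\sigma_M/4$; the last condition is achievable because the construction enforces $\rho_l\le 2^{-(l+4)}\lambda_l\to 0$ (via $\lambda_l<\psi_{l-1}/8$ and $\psi_l\downarrow 0$). Finally, I would shrink $\sigma$ below $\sigma_M/2$ and below the distance from $t$ to the boundary of the component $H_{k_N(t),j_N}$ of $H_{k_N(t)}$ containing $t$, so that $[t-\sigma,t+\sigma]\subseteq\interior(H_{k_N(t),j_N})$.

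With these parameters fixed, I would verify the pointwise bound separately in two cases for almost every $r\in[t-\sigma,t+\sigma]$. If $r\in H$, Remark~\ref{rem:kn} gives $k_n(r)=k_n(t)$ for all $n\le N$, so $\beta_{k_N(t)}(r)=\beta_{k_N(t)}(t)$; combined with the $\Omega_M$-inequality at $r$ and the monotonicity of $\beta_{k_n(r)}(r)|_M$ in $n$ (immediate from~\eqref{beta_increasing} and the definition of the ordering), this forces $\beta(r)|_M=\beta(t)|_M$. Lemma~\ref{lemma:convergence_derivatives} then places both $\theta(\gamma_\infty'(r))$ and $\theta(v)$ in the single cell $S_{\beta(t)|_M}$, while their magnitudes agree up to $O(2^{-k_N(t)})$ thanks to the constant speed of $\nu_{k_N(t),j_N}$ on $H_{k_N(t),j_N}$ together with~\eqref{deriv_change_easy} and~\eqref{deriv_change_tricky_same_size}. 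If instead $r\notin H$, set $K(r):=\max\set{l:r\in\interior(H_l)}$, which is finite and at least $k_N(t)$; repeated use of~\eqref{deriv_change_easy} yields $\norm{\gamma_\infty'(r)-\gamma_{K(r)}'(r)}\le 2^{-K(r)}$, and $\gamma_{K(r)}'(r)=\nu_{K(r),j^*}'(r)$ has constant magnitude and direction in $B(S_{\beta_{K(r),j^*}},2^{-K(r)})$, with magnitude handled by the same tracking argument as in the previous case applied at the point $t_{K(r),j^*}$. Everything then reduces to the identity $\beta_{K(r),j^*}|_M=\beta(t)|_M$.

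The hard part is establishing this final identity in the second case. The inequality $\beta_{K(r),j^*}|_M\ge\beta(t)|_M$ is immediate from~\eqref{beta_increasing} applied at the point $t_{K(r),j^*}\in H_{K(r),j^*}\cap H_{k_N(t),j_N}$. The reverse inequality requires transferring information from a point of $H$: using Lemma~\ref{lemma:Finf_H_properties}~\eqref{Finf_in_Hkj} together with the density of $H\cap\Finf$ in $\Finf$, I would produce some $r'\in H\cap\interior(H_{K(r),j^*})$. The crucial geometric observation is that $\diam H_{K(r),j^*}\le 2\rho_{K(r)}<\sigma_M/2$, which forces $\abs{r'-t}\le\abs{r'-r}+\abs{r-t}<\sigma_M/2+\sigma_M/2=\sigma_M$, so $r'$ is admissible for the $\Omega_M$-condition at $t$; applying it to $r'$ together with the monotonicity of $\beta_{k_n(r')}(r')|_M$ (noting that $K(r)=k_{n_0}(r')$ for some $n_0$, since $r'\in\interior(H_{K(r)})$) delivers $\beta_{K(r),j^*}|_M\le\beta(r')|_M\le\beta(t)|_M$, closing the argument.
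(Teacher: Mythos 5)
Your proof is correct. It rests on the same two pillars as the paper's: membership in an $\Omega$-set is used to sandwich the restricted indices $\beta(\cdot)|_M$ of nearby points between $\beta(t)|_M$ from above (the defining inequality of $\Omega_M$) and from below (via \eqref{beta_increasing}, \eqref{eq:beta-incr} and Remark~\ref{rem:kn}), and the estimates of \eqref{conv_der} then convert agreement of indices into closeness of derivatives. The execution differs, though. The paper fixes one reference curve $\gamma_{\kappa_M}$ whose component $H_{\kappa_M,j_M}$ lies inside $[t-\sigma(t,N),t+\sigma(t,N)]$, uses Lemma~\ref{lemma:cauchy_bound_der} to show that $\gamma_l-\gamma_{\kappa_M}$ is $9\eps$-Lipschitz near $t$, passes to the uniform limit, and combines this with the small oscillation of $\gamma_{\kappa_M}'$ there, so that differentiability and the oscillation bound drop out together. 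You instead bound $\norm{\gamma_\infty'(r)-v}$ directly for almost every $r$ near $t$ and integrate. The genuinely distinct ingredient in your version is the explicit split according to whether $r\in H$: for $r\notin H$ you transfer the $\Omega_M$-inequality from an auxiliary point $r'\in H\cap\interior(H_{K(r),j^*})$ (supplied by Lemma~\ref{lemma:Finf_H_properties}) back to $\beta_{K(r),j^*}$, using $\diam H_{K(r),j^*}\le 2\rho_{K(r)}<\sigma_M/2$ to keep $r'$ within range of $t$. This costs a longer case analysis, but it has the merit of treating points outside $H$ explicitly, whereas the paper's chain of inequalities involving $\beta(s)$ is written only for $s\in H$; your extra requirement $\sup_{l\ge k_N(t)}\rho_l<\sigma_M/4$ is precisely the quantitative input that makes the transfer work, and it is indeed available from \eqref{eq:rho_n}, \eqref{eq:lambda_n} and \eqref{psi_decay}.
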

\begin{proof}
	Fix $\varepsilon>0$. Let $N\in\N$ be sufficiently large such that $2^{-N+2}\sqrt{d}<\eps$, i.e.\ the diameter of any $S_\beta$ with $\beta\in T_n$, $n\ge N$, is less than $\eps$.
	As $t\in\Einf\subseteq\Omega_N$, let $\sigma(t,N)>0$ be given by the definition~\eqref{def:Omegak} of $\Omega_{N}$. 

	Recall $\Omega_N\subseteq H$, so $t\in H$. Let $\kappa_{n}=k_n(t)$ be the sequence of indices defined by~\eqref{def-kn}. For each $n\in\N$ let $j_{n}\in\set{1,\ldots,p_{n}}$ be the index with $t\in \interior(H_{\kappa_{n},j_{n}})$. By Lemma~\ref{lemma:convergence_derivatives}, there exists $L(t)=\lim_{n\to\infty} \gamma_{\kappa_n}'(t)$. Choose $M\ge N$ sufficiently large so that
	\begin{itemize}
		\item $\beta_{\kappa_{m}}(t)|_{N}=\beta(t)|_N$ is constant for all $m\geq M$, 
		\item $H_{\kappa_{M},j_{M}}\subseteq [t-\sigma(t,N),t+\sigma(t,N)]$,
		\item $\norm{\gamma_{\kappa_{M}}'(t)-L(t)}\leq\varepsilon$.
	\end{itemize}
		Choose $\eta$ sufficiently small so that $[t-\eta,t+\eta]\subseteq H_{{\kappa_{M},j_{M}}}$. Then, by~\eqref{H_k_4}, we have $\beta_{\kappa_{M}}(s)= \beta_{\kappa_{M}}(t)=:\beta$ for all $s\in [t-\eta,t+\eta]$. 
By~\eqref{rem:kn:s} of Remark~\ref{rem:kn}
	we conclude that $k_n(s)=k_n(t)=\kappa_n$ for all $s\in[t-\eta,t+\eta]$ and $1\le n\le M$.

	Let $s\in[t-\eta,t+\eta]$, $l\geq \kappa_{M}$ and choose $q\in\N$ maximal so that $k_{q}(s)\leq l$.
	As $l\ge \kappa_M=k_M(s)$, we conclude that $q\ge M$. Using, in addition,~\eqref{beta_increasing} with $s\in H_{\kappa_M}\cap H_{k_q}$, $t\in\Omega_N$ and~\eqref{eq:beta-incr}, we get
	\begin{equation*}
	\beta_{k_q}(s)|_{N}
	\geq \beta_{\kappa_{M}}(s)|_{N}
	=\beta_{\kappa_{M}}(t)|_{N}
	=\beta(t)|_{N}
	\geq \beta(s)|_{N}
	\geq \beta_{k_q}(s)|_{N}.
	\end{equation*}
	Therefore, 
	$\beta_{k_q}(s)|_{N}
	=\beta_{\kappa_{M}}(s)|_{N}
	=\beta(s)|_N$ 
	so that 
	\[\metric(\beta_{k_q}(s),\beta_{\kappa_{M}}(s))
	\leq \metric(\beta(s)|_N,\beta(s)|_N)<\varepsilon.\]
	Then, applying Lemma~\ref{lemma:cauchy_bound_der} we get
	\begin{equation*}
	\norm{\gamma_{l}'(s)-\gamma_{\kappa_{M}}'(s)}
	\leq 2\metric\left(\beta_{k_q}(s),\beta_{\kappa_{M}}(s)\right)+7\cdot 2^{-\kappa_{M}}<9\varepsilon.
	\end{equation*}
	From this we conclude that $\Lip\left((\gamma_{l}-\gamma_{\kappa_{M}})|_{[t-\eta,t+\eta]}\right)\leq 9\varepsilon$. Since $\gamma_{l}$ converges uniformly to $\gamma_\infty$ we deduce that $\Lip\left((\gamma_\infty-\gamma_{\kappa_{M}})|_{[t-\eta,t+\eta]}\right)\leq 9\varepsilon$. Further, by~\eqref{deriv_change_tricky_compass}, \eqref{deriv_mag_constant} and~\eqref{lip} we have 
	\begin{equation*}
	\osci{\gamma_{\kappa_{M}}'}{[t-\eta,t+\eta]}
	\leq2\diam\left(B(S_{\beta_{\kappa_{M}}},2^{-\kappa_{M}})\right)=2\left(2\cdot 2^{-\kappa_{M}}+\metric(\beta_{\kappa_{M}},\beta_{\kappa_{M}})\right)\leq 4\varepsilon.
	\end{equation*}
	It follows that for all $h\in [-\eta,\eta]$
	\begin{equation*}
	\norm{\gamma_{\kappa_{M}}(t+h)-\gamma_{\kappa_{M}}(t)-h\gamma_{\kappa_{M}}'(t)}\leq 4\varepsilon\abs{h}.
	\end{equation*}
	Using
	\begin{align*}
	\norm{\tfrac{\gamma_\infty(t+h)-\gamma_\infty(t)}{h}-\gamma_{\kappa_{M}}'(t)}
	\leq
	\norm{\tfrac{(\gamma_\infty-\gamma_{\kappa_{M}})(t+h)-(\gamma_\infty-\gamma_{\kappa_{M}})(t)}{h}}+
	\norm{\tfrac{\gamma_{\kappa_{M}}(t+h)-\gamma_{\kappa_{M}}(t)}{h}-\gamma_{\kappa_{M}}'(t)},
	\end{align*}	
	we now derive, for all $h\in [-\eta,\eta]\setminus \set{0}$,
\[
\norm{\tfrac{\gamma_\infty(t+h)-\gamma_\infty(t)}{h}-L(t)}
	\leq
9\eps+4\eps+
\norm{\gamma_{\kappa_{M}}'(t)-L(t)}\le14\eps.
\]	
	
Since $\varepsilon>0$ was arbitrary, this verifies the differentiability of $\gamma_\infty$ at $t$ with $\gamma_\infty'(t)=L(t)$. For the `moreover' part of the lemma, we observe that
	\begin{equation*}
	\osci{\gamma_\infty'}{[t-\eta,t+\eta]}
	\leq 2\lip\left((\gamma_\infty-\gamma_{\kappa_{M}})|_{[t-\eta,t+\eta]}\right)+
	\osci{\gamma_{\kappa_{M}}'}{[t-\eta,t+\eta]}
	\leq 18\varepsilon+4\varepsilon.
	\end{equation*}
\end{proof}
We now reparameterise the curve $\gamma_{\infty}$ to obtain a curve $\gamma\colon I(\gamma)\to(0,1)^{d}$ satisfying the conclusions of Theorem~\ref{lemma:key}. Let 
\begin{equation*}
\ell(\gamma_{\infty}):=\int_{0}^{1}\norm{\gamma_{\infty}'(s)}\,ds
\end{equation*}
denote the length of the curve $\gamma_{\infty}$. Define a mapping $\varphi\colon [0,1]\to[0,\ell(\gamma_{\infty})]$ by
\begin{equation*}
\varphi(t)=\int_{0}^{t}\norm{\gamma_{\infty}'(s)}\,ds,\qquad t\in[0,1].
\end{equation*}
\begin{lemma}\label{lemma:varphi_der}
	The function $\varphi\colon [0,1]\to[0,\ell(\gamma_{\infty})]$ has the following properties:
	\begin{enumerate}[(i)]
		\item\label{vphi_bilip} The function $\varphi$ is bilipschitz with $\lip(\varphi),\lip(\varphi^{-1})\leq 2$.
		\item\label{vphi_x_diff} There is a set $X\subseteq[0,1]$ of full measure with $\Einf\subseteq X$ such that for every $t\in X$ both $\gamma_{\infty}$ and $\varphi$ are differentiable at $t$ and 
		\begin{equation*}
		\varphi'(t)=\norm{\gamma_{\infty}'(t)}\geq\frac{1}{2}.
		\end{equation*}
	\end{enumerate}
	\end{lemma}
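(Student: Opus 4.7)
The plan is to dispatch part~\eqref{vphi_bilip} immediately from the integral representation of $\varphi$ and the bounds on $\|\gamma_\infty'\|$ already obtained in Lemma~\ref{lemma:gamma_inf_der}, and to spend the real effort on part~\eqref{vphi_x_diff}. For \eqref{vphi_bilip} I would simply combine Lemma~\ref{lemma:gamma_inf_der}\eqref{gamma_inf_lip}~and~\eqref{gamma_inf_lip_curve}, which supply $1/2 \le \|\gamma_\infty'(s)\| \le 2$ for almost every $s$, with the monotone identity $\varphi(t_2)-\varphi(t_1)=\int_{t_1}^{t_2}\|\gamma_\infty'(s)\|\,ds$ for $0\le t_1\le t_2\le 1$; this yields $\tfrac{1}{2}(t_2-t_1) \le \varphi(t_2)-\varphi(t_1) \le 2(t_2-t_1)$, hence bilipschitz with $\lip(\varphi),\lip(\varphi^{-1})\le 2$.

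For part~\eqref{vphi_x_diff}, my starting point is to take $X_0\subseteq[0,1]$ to be the full-measure set of points at which $\gamma_\infty$ is differentiable, $\varphi$ is differentiable with $\varphi'(t)=\|\gamma_\infty'(t)\|$, and $\|\gamma_\infty'(t)\|\ge 1/2$; such an $X_0$ exists because $\gamma_\infty$ is Lipschitz, $\varphi$ is absolutely continuous with density $\|\gamma_\infty'(\cdot)\|$ by the Lebesgue differentiation theorem, and the lower bound holds almost everywhere by Lemma~\ref{lemma:gamma_inf_der}\eqref{gamma_inf_lip_curve}. Setting $X:=X_0\cup\Einf$ preserves full measure, so it suffices to verify the three assertions at an arbitrary $t\in\Einf$. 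Differentiability of $\gamma_\infty$ at any such $t$ is supplied directly by Lemma~\ref{lemma:differentiable}. For the lower bound, I would use that $\Einf\subseteq H$ avoids every $W_k$ (since $H_{k'}\subseteq\bigcap_{i<k'}W_i\comp$ by~\eqref{H_k_2}, and every $t\in H$ lies in some such $H_{k'}$ with $k'>k$) and also the endpoints $\set{0,1}\subseteq W_1$, so by~\eqref{W_k} we get $t\notin\nullA_k$ for every $k$; condition~\eqref{lip} then yields $\|\gamma_k'(t)\|\ge c_k$, and Lemma~\ref{lemma:differentiable} gives $\|\gamma_\infty'(t)\|=\lim_{k\to\infty}\|\gamma_k'(t)\|\ge\lim_k c_k=1/2$.

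The main obstacle will be deducing differentiability of $\varphi$ at $t\in\Einf$ with derivative $\|\gamma_\infty'(t)\|$ from the essential-oscillation statement $\lim_{\delta\to 0}\osci{\gamma_\infty'}{[t-\delta,t+\delta]}=0$ of Lemma~\ref{lemma:differentiable}. My plan here is as follows: fix $\varepsilon>0$ and pick $\delta>0$ together with a null set $N_\delta\subseteq[t-\delta,t+\delta]$ such that $\|\gamma_\infty'(s_1)-\gamma_\infty'(s_2)\|<\varepsilon$ for all $s_1,s_2\in[t-\delta,t+\delta]\setminus N_\delta$. Fixing any $s_0\in[t-\delta,t+\delta]\setminus N_\delta$ and using absolute continuity of $\gamma_\infty$ to write $\gamma_\infty(t+h)-\gamma_\infty(t)=\int_t^{t+h}\gamma_\infty'(s)\,ds$, the differentiability of $\gamma_\infty$ at $t$ combined with the fact that the integrand is within $\varepsilon$ of $\gamma_\infty'(s_0)$ for almost every $s$ yields $\|\gamma_\infty'(t)-\gamma_\infty'(s_0)\|\le\varepsilon$ in the limit $h\to 0$. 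Consequently $\bigl|\|\gamma_\infty'(s)\|-\|\gamma_\infty'(t)\|\bigr|\le 2\varepsilon$ for almost every $s\in[t-\delta,t+\delta]$, and then averaging over $[t,t+h]$ gives $\bigl|\tfrac{\varphi(t+h)-\varphi(t)}{h}-\|\gamma_\infty'(t)\|\bigr|\le 2\varepsilon$ whenever $0<|h|<\delta$, establishing $\varphi'(t)=\|\gamma_\infty'(t)\|$ as required.
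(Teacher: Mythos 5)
Your proof is correct, and part~\eqref{vphi_bilip} plus the lower bound $\norm{\gamma_{\infty}'(t)}\geq 1/2$ on $\Einf$ are handled essentially as in the paper. For the differentiability of $\varphi$ in part~\eqref{vphi_x_diff}, however, you take a genuinely different route. The paper defines $X$ structurally, as the full-measure set of points $s\in[0,1]\setminus\bigcup_{i}\nullA_{i}$ at which all $\gamma_{k}$ and $\gamma_{\infty}$ are differentiable with $\gamma_{\infty}'(s)=\lim_{k}\gamma_{k}'(s)$, and proves $\varphi'(t)=\norm{\gamma_{\infty}'(t)}$ by a single uniform argument valid at \emph{every} $t\in X$: it goes back to the construction, using that $s\mapsto\norm{\gamma_{k}'(s)}$ is constant on each component of $[0,1]\setminus\nullA_{k}$ (condition~\eqref{deriv_mag_constant}) and changes by at most $2^{-m}$ at each later stage (conditions~\eqref{deriv_change_easy} and~\eqref{deriv_change_tricky_same_size}), so that $\norm{\gamma_{\infty}'}$ is nearly constant on a neighbourhood of $t$. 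You instead take $X=X_{0}\cup\Einf$ with $X_{0}$ a generic full-measure set supplied by Rademacher, the Lebesgue differentiation theorem and Lemma~\ref{lemma:gamma_inf_der}\eqref{gamma_inf_lip_curve}, and then treat $t\in\Einf$ separately using only the \emph{conclusions} of Lemma~\ref{lemma:differentiable}. The one step that needs care there is converting the essential-oscillation bound $\osci{\gamma_{\infty}'}{[t-\delta,t+\delta]}<\eps$ into a pointwise statement at $t$, since $t$ could lie in the exceptional null set; your device of fixing a representative $s_{0}$ outside that null set, and recovering $\norm{\gamma_{\infty}'(t)-\gamma_{\infty}'(s_{0})}\leq\eps$ from the integral representation of $\gamma_{\infty}$ together with the already-established differentiability at $t$, closes this correctly. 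The trade-off: your argument is more modular, relying only on previously stated lemmata rather than on the raw construction, while the paper's yields a single, explicitly described set $X$ on which one uniform estimate holds; both witness the lemma as stated.
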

	\begin{proof}
		Part~\eqref{vphi_bilip} follows from Lemma~\ref{lemma:gamma_inf_der}\eqref{gamma_inf_deriv} and~\eqref{lip}. For part~\eqref{vphi_x_diff}, let $X$ be defined as the set of points $s\in[0,1]\setminus \bigcup_{i=1}^{\infty}\nullA_{i}$ at which all curves $\gamma_{k}$ with $k\in\N\cup\set{\infty}$ are differentiable and $\gamma_{\infty}'(s)=\lim_{k\to\infty}\gamma_{k}'(s)$. The inequality $\geq \frac{1}{2}$ in the statement is now a consequence of~\eqref{lip}. Recalling that the sets $\nullA_{i}$ are finite, it follows immediately from Lemma~\ref{lemma:gamma_inf_der}\eqref{gamma_inf_deriv} that $X$ has full measure. Further, from Lemma~\ref{lemma:differentiable} and $\Einf\subseteq H\subseteq [0,1]\setminus \bigcup_{i=1}^{\infty}\nullA_{i}$, we derive that $X$ contains $\Einf$. 		
		Fix $t\in X$, $\varepsilon\in(0,1/4)$ and let $k\in\N$ be large enough so that $2^{-k}\leq\varepsilon$ and $\norm{\gamma_{k}'(t)-\gamma_{\infty}'(t)}\leq \varepsilon$. Next choose $\delta>0$ small enough so that $[t-\delta,t+\delta]$ is contained in a single component of $[0,1]\setminus \nullA_{k}$. From~\eqref{deriv_change_easy}--\eqref{deriv_mag_constant} it follows that 
		\begin{equation*}
		\Biggl|{\norm{\gamma_{l}'(s)}-\norm{\gamma_{k}'(t)}}\Biggr|\leq 2^{-k}\le\eps
		\end{equation*}
		for all $l\geq k$ and all $s\in [t-\delta,t+\delta]\cap X$, implying
		\begin{equation*}
		\Biggl|{\norm{\gamma_{\infty}'(s)}-\norm{\gamma_{k}'(t)}}\Biggr|\le\eps
		\end{equation*}
		for all such $s$.
		Hence, for almost all $s\in[t-\delta,t+\delta]$ we have
		\begin{equation*}
		\Biggl|{\norm{\gamma_{\infty}'(s)}-\norm{\gamma_{\infty}'(t)}}\Biggr|
		\le
		\Biggl|{\norm{\gamma_{\infty}'(s)}-\norm{\gamma_{k}'(t)}}\Biggr|
		+\eps
		\leq 2\eps,
		\end{equation*}
		and therefore, for all $h\in[-\delta,\delta]$ we have
		\begin{equation*}
		\Bigl|
		\varphi(t+h)-\varphi(t)-h\cdot \norm{\gamma_{\infty}'(t)}\Bigr|\leq 
		\int_{t}^{t+h}\Bigg|{\norm{\gamma_{\infty}'(s)}-\norm{\gamma_{\infty}'(t)}}\Bigg|\,ds\leq 2\varepsilon\abs{h}.
		\end{equation*}
\end{proof}
We now use results and constructions of Section~\ref{sec:construction} to finish the proof Theorem~\ref{lemma:key}.
\begin{proof}[Proof of Theorem~\ref{lemma:key}]
We find a curve $\gamma$ satisfying all assertions Theorem~\ref{lemma:key}, except that its domain is an interval $I(\gamma)$ and not necessarily $[0,1]$. It is then a trivial matter to adjust $\gamma$ so that its domain is $[0,1]$ and all assertions of the theorem remain valid. We comment briefly on the required modification at the very end.

From Lemma~\ref{lemma:varphi_der}\eqref{vphi_x_diff}, 
and an appropriate form of the inverse function theorem it follows that 
\begin{equation}\label{eq:varphi_inv_der}
(\varphi^{-1})'(\varphi(r))=\frac{1}{\norm{\gamma_{\infty}'(r)}}
\end{equation}
for all $r\in X$, where $X$ and $\varphi$ are given by Lemma~\ref{lemma:varphi_der}. 
More precisely,~\eqref{eq:varphi_inv_der} is obtained by an application of \cite[Theorem~1.2]{RR_inverse_fn_thm} to $U=(0,1)$, $n=1$, $x_{0}=r\in X$ and $f=\varphi$. 
Note that the condition $f'(x_{0})=\varphi'(r)\in \operatorname{Isom}(\R,\R)$ is satisfied due to Lemma~\ref{lemma:varphi_der}\eqref{vphi_x_diff}. Since in this case $f=\varphi$ is invertible, the function $h$ given by the conclusion of \cite[Theorem~1.2]{RR_inverse_fn_thm} necessarily coincides with $\varphi^{-1}$ on its domain. 

We recall sets $E_\infty$ and $F_\infty$ from~\eqref{eq:Omega_Einf} and~\eqref{eq:def_finf_h} to define 
\begin{equation*}
\Fonedim:=\phi(\Finf),\qquad
E:=\phi(\Einf\cap \Finf)
\end{equation*}
and $\gamma\colon [0,\ell(\gamma_{\infty})]\to (0,1)^{d}$ by
\begin{equation*}
\gamma(t)=\gamma_{\infty}(\phi^{-1}(t)).
\end{equation*}
By 
Lemmata~\ref{lemma:Finf_H_properties}~\eqref{every_portion_pos} and~\ref{lemma:Omega_k_Einf_prop}~\eqref{Einf_rel_res}, the sets $E$ and $\Fonedim$ are  non-empty.
We verify the assertions~\eqref{key1}--\eqref{key4} of Theorem~\ref{lemma:key} for $\Fonedim$, $E$ and $\gamma$. The properties~\eqref{key1} 
and~\eqref{key2} are invariant under bilipschitz transformations. Therefore $\Fonedim$ and $E$ inherit these properties from $\Finf$ and $\Einf\cap \Finf$; see 
Lemmata~\ref{lemma:Finf_H_properties}~\eqref{every_portion_pos} and~\ref{lemma:Omega_k_Einf_prop}~\eqref{Einf_rel_res}. Moreover,~\eqref{key4} is immediate from the definitions of $\gamma$, $E$, $\Fonedim$ and 
Lemma~\ref{lemma:Finf_H_properties}\eqref{HcapF_preim}.
To complete the proof, we verify~\eqref{key3} and~\eqref{key3a}. Fix $t\in \varphi(X)$. Then $t=\varphi(r)$ for some $r\in X$. Applying~\eqref{eq:varphi_inv_der} we conclude that $\varphi^{-1}$ is differentiable at $t$ with derivative $(\varphi^{-1})'(t)=\frac{1}{\norm{\gamma_{\infty}'(r)}}$. Moreover, $\gamma_{\infty}$ is differentiable at $\varphi^{-1}(t)$ by Lemma~\ref{lemma:varphi_der}. It follows that $\gamma$ is differentiable at $t$ with
\begin{equation*}
\gamma'(t)
=\gamma_{\infty}'(\varphi^{-1}(t))\cdot (\varphi^{-1})'(t)
=\gamma_{\infty}'(r)\cdot\frac{1}{\norm{\gamma_{\infty}'(r)}}.
\end{equation*}
Clearly, from the above, we also have $\norm{\gamma'(t)}=1$. Since $E\subseteq \varphi(X)$, part~\eqref{key3} is satisfied. For $t_{0}=\varphi(r_{0})\in E$ and any $t,s\in[t_0-\delta,t_0+\delta]\cap \varphi(X)$ 
Lemma~\ref{lemma:varphi_der}\eqref{vphi_bilip} implies that the preimages $r_t:=\varphi^{-1}(t)$ and $r_s:=\varphi^{-1}(s)$ belong to $\varphi^{-1}[t_0-\delta,t_0+\delta]\cap X\subseteq[r_0-2\delta,r_0+2\delta]$,
and then~\eqref{eq:varphi_inv_der}, together with Lemma~\ref{lemma:gamma_inf_der}\eqref{vphi_x_diff}, implies $\abs{(\phi^{-1})'(t)-(\phi^{-1})'(s)}\le 4\osci{\gamma_\infty'}{[r_0-2\delta,r_0+2\delta]}$. Therefore, we obtain
\begin{align*}
\norm{\gamma'(t)-\gamma'(s)}
&=
\norm{\gamma_{\infty}'(r_t)\cdot (\varphi^{-1})'(t)-
\gamma_{\infty}'(r_s)\cdot (\varphi^{-1})'(s)}
\\
&\le
\norm{\gamma_\infty'(r_t)}
\abs{(\phi^{-1})'(t)-(\phi^{-1})'(s)}
+
\abs{(\varphi^{-1})'(s)}\osci{\gamma_{\infty}'}{[r_{0}-2\delta,r_{0}+2\delta]}
\\
&\le 10\osci{\gamma_{\infty}'}{[r_{0}-2\delta,r_{0}+2\delta]},
\end{align*}
where for the last inequality we used that both
$\Lip(\phi^{-1})$ and $\Lip(\gamma_\infty)$ are bounded from above by $2$; see Lemmata~\ref{lemma:gamma_inf_der}\eqref{gamma_inf_lip} and~\ref{lemma:varphi_der}\eqref{vphi_bilip}. 

The proof of part~\eqref{key3a} is now completed by the `moreover' conclusion of Lemma~\ref{lemma:differentiable}.

Let us now comment on why we may assume that the domain $I(\gamma)$ of $\gamma$ is the interval $[0,1]$, as in the statement of Theorem~\ref{lemma:key}. Note that $I(\gamma)$ has the form $[0,a]$ for some $a:=\ell(\gamma_{\infty})>0$. If $a\geq 1$ then we choose a closed interval $J\subseteq (0,a)$ of length strictly less than one such that the endpoints of $J$ are density points of $\Fonedim$. We then redefine the sets $\Fonedim$ and $E$ by intersecting with $J$. Finally, we choose a closed interval $J'\subseteq [0,a]$ of length one with $J\subseteq \interior(J')$ and redefine $\gamma$ by restricting to $J'$ and then shifting so that $\gamma$ is defined on $[0,1]$. If $a<1$ then we extend the curve $\gamma$ arbitrarily to $[0,1]$ and leave the sets $\Fonedim\subseteq [0,a]$ and $E\subseteq [0,a]$ unchanged. In both cases all assertions~\eqref{key1}--\eqref{key4} of Theorem~\ref{lemma:key} are preserved.
\end{proof}
\section{Typical non-differentiability on coverable sets}\label{sec.proof}
In this section we prove Theorem~\ref{thm:main:nondiff}, that is, we show that any set in $(0,1)^{d}$ which may be covered by a countable union of closed, purely unrectifiable sets avoids, for the typical function $f\in\lip_{1}([0,1]^{d})$, the set of points where $f$ has a directional derivative.

\paragraph{Notation.} We will write $\lip([0,1]^{d})$ for the set of all Lipschitz functions $[0,1]^{d}\to\R$. Further, recall that for a subset $U\subseteq [0,1]^{d}$, we let $C^{1}(U)$ denote the set of continuous functions $f\colon[0,1]^{d}\to \R$ with the property that $f|_{\interior(U)}$ is $C^{1}$.

The following lemma is a simplification of~\cite[Lemma~2.3]{maleva_preiss2018}, in the case when $P\subseteq[0,1]^{d}$ is 
a closed set.
We also only state it in the case when the function $\omega_0(t)$ of \cite[Lemma~2.3]{maleva_preiss2018} is constant.
\begin{lemma}\label{3x}
Suppose that $P\subseteq H\subseteq(0,1)^d$, where $P$ is closed and $H$ is open, 
the function $g\colon (0,1)^d\to\R$ belongs to $C^1(H)$ and $\omega_0,\eta\in(0,1)$.
Then there are $\xi_0,r_0\in(0,\omega_0/2]$ such that
if $h\colon [0,1]^d\to\R$ satisfies
\begin{equation}\label{eq:lm_cnd_h}
\text{$\abs{h(x)-g(x)}\le2\xi_0$ for all $x\in[0,1]^d$,}
\end{equation}
then 
for all $x\in P$ and 
$\norm{y}\le r_0$, it holds
\begin{equation}\label{3x.2}
\abs{h(x+y)-h(x)-\spr{\nabla g(x)}{y}}\le\eta r_0.
\end{equation}
\end{lemma}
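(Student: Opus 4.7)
The plan is to exploit the $C^1$-regularity of $g$ on $H$ together with the compactness of $P$ (which follows from $P$ being a closed subset of the bounded set $(0,1)^d$) to reduce the claim to a routine Taylor estimate followed by a triangle inequality.

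I would first note that since $P$ is compact and $H$ is open with $P\subseteq H$, the distance $\delta:=\dist(P,\R^d\setminus H)$ is strictly positive. On the compact set $K:=\overline{B}(P,\delta/2)\subseteq H$, the gradient $\nabla g$ is continuous, hence uniformly continuous. Consequently, applying the Mean Value Theorem along the segment from $x$ to $x+y$ (which lies in $K$ whenever $x\in P$ and $\norm{y}\le\delta/2$), I can choose $r_0\in(0,\omega_0/2]$ small enough that $r_0\le\delta/2$ and
\[
\bigl|g(x+y)-g(x)-\spr{\nabla g(x)}{y}\bigr|\le\tfrac{\eta}{2}\,r_0\qquad\text{for all $x\in P$ and $\norm{y}\le r_0$.}
\]
In particular $x+y\in H\subseteq(0,1)^d\subseteq[0,1]^d$, so $h(x+y)$ is defined by hypothesis.

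Having fixed $r_0$, I would then set $\xi_0:=\min(\omega_0/2,\eta r_0/8)$. For any $h\colon[0,1]^d\to\R$ satisfying~\eqref{eq:lm_cnd_h}, the triangle inequality and the Taylor bound above yield
\[
\bigl|h(x+y)-h(x)-\spr{\nabla g(x)}{y}\bigr|
\le |h(x+y)-g(x+y)|+|g(x)-h(x)|+\bigl|g(x+y)-g(x)-\spr{\nabla g(x)}{y}\bigr|
\le 4\xi_0+\tfrac{\eta}{2}r_0\le\eta r_0,
\]
which is precisely~\eqref{3x.2}.

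There is no significant obstacle here; the only point that demands attention is the order of the quantifiers. The radius $r_0$ is selected first, using only $g$, $\eta$, $\omega_0$ and the pair $(P,H)$; the scale $\xi_0$ is chosen subsequently in terms of $\eta r_0$ so that the error introduced by replacing $g$ by a perturbation $h$ with $\inorm{h-g}\le 2\xi_0$ is absorbed by $\tfrac{\eta}{2}r_0$. The constraints $\xi_0,r_0\le\omega_0/2$ demanded by the conclusion are built into the choices from the outset and so require no separate verification.
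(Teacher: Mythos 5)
Your proof is correct and follows essentially the same route as the paper's: uniform continuity of $\nabla g$ on a compact neighbourhood of $P$ inside $H$, a first-order Taylor/mean-value estimate giving $\tfrac{\eta}{2}r_0$, and a triangle inequality absorbing the $4\xi_0$ perturbation error, with $r_0$ chosen before $\xi_0$. The only cosmetic difference is that the paper packages the uniform continuity via a supremum over a family $\Psi$ of admissible radius functions (a leftover of the more general lemma it simplifies), whereas you appeal directly to $\dist(P,\R^d\setminus H)>0$ and uniform continuity on $\overline{B}(P,\delta/2)$; the resulting choices of $r_0$ and $\xi_0$ agree up to constants.
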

\begin{proof}
Denote $\rho_H(x):=\dist(x,[0,1]^d\setminus H)$;
let $\Psi$ be the set of functions $\psi\in\Lip_1([0,1]^d)$ satisfying $0\le \psi(x)\le\tfrac12\min(\rho_H(x),\omega_0)$
and such that
\begin{equation}\label{eq:g'_eta_condition}
\text{$\norm{\nabla g(y)-\nabla g(z)}\le\tfrac12\eta$
whenever $x\in H$ and $\max(\norm{y-x},\norm{z-x})<\psi(x)$}.
\end{equation}
Since $0\in\Psi$, the function $\phi(x):=\sup\{\psi(x)\colon  \psi\in\Psi\}$
is well-defined. We also have $\phi\in\Psi$ since for any
$x,y,z$ satisfying $x\in H$ and $\max(\norm{y-x},\norm{z-x})<\phi(x)$
there is $\psi\in\Psi$ such that
$\max(\norm{y-x},\norm{z-x})<\psi(x)$ and hence
$\norm{\nabla g(y)-\nabla g(z)}\le\tfrac12\eta$.

Let $w\in H$ be arbitrary. Choose $\eps_w\in(0,\omega_0/2)$ such that $B(w,3\eps_w)\subseteq H$ and the bound $\norm{\nabla g(y)-\nabla g(z)}\le\frac12\eta$ holds for $y,z\in B(w,2\eps_w)$.
Then the function defined by $\psi_{w}(x):=\max(0,\eps_w-\norm{x-w})$
satisfies $\psi_{w}=0$ outside of the ball $B(w,\eps_w)$ and
$0\le\psi_{w}(x)\le \eps_w \le \tfrac12\min(\rho_H(x),\omega_0)$
for all $x\in B(w,\eps_w)$. This, together with the choice of $\eps_{w}$, clearly ensures that~\eqref{eq:g'_eta_condition} is satisfied for $\psi=\psi_{w}$.
Hence $\psi_{w}\in\Psi$ and
we infer that
$\phi(w)\ge\psi_{w}(w)=\eps_w>0$.
Consequently, $\phi$ is strictly positive on $H$.
Let $\phi_0=\inf\{\phi(x)\colon x\in P\}$; as $P$ is compact we have that $0<\phi_0\le\tfrac12\omega_0$. Furthermore,
whenever $x\in P$ and $\norm{y}< \phi_0$, it holds
\[\abs{g(x+y)-g(x)-\spr{\nabla g(x)}{y}}
\le\norm{y}\sup_{z\in B(x,\norm{y})} \norm{\nabla g(z)-\nabla g(x)}
\le \tfrac12\eta\norm{y}.
\]

To prove~\eqref{3x.2},  we let
$r_0:=\phi_0/2\in(0,\omega_0/2]$ and $\xi_0:=\phi_0\eta/16=r_0\eta/8\in (0,\omega_{0}/2]$ and consider an arbitrary function $h\colon[0,1]^{d}\to\R$ satisfying~\eqref{eq:lm_cnd_h}. 
Then,
whenever $x\in P$ and $\norm{y}\le r_0<\phi_0\le\phi(x)$, we have 
\begin{align*}
\abs{h(x+y)-h(x)-\spr{\nabla g(x)}{y}}
&\le
4\xi_0 +\abs{g(x+y)-g(x)-\spr{\nabla g(x)}{y}}\\
&\le
4\xi_0 +  \tfrac12\eta\norm{y}
\le \eta r_0.
\end{align*}
\end{proof}

Hence,~\cite[Lemma~2.9]{maleva_preiss2018} may be restated in the following way, in the case of a compact purely unrectifiable set $P$: note that such sets are automatically uniformly purely unrectifiable; see~\cite{maleva_preiss2018,acp2010differentiability}.
\begin{lemma}\label{U}
Suppose $P\subseteq H\subseteq (0,1)^d$,
$P$ is a closed, uniformly~purely~unrectifiable~set, 
$H$~is open,
$\omega_0\in(0,1)$
and $f\in\Lip([0,1]^d)\cap C^1(H)$.
Then for every
$e\in\R^d$ and
$\eta>0$
there is $g\colon[0,1]^d\to\R$,
$\xi_{0},r\in(0,\omega_0)$
and an open set $U\subseteq(0,1)^d$ 
such that
\begin{enumerate}[(i)]
\item\label{pU.1}
$P\subseteq U\subseteq H$,
\item\label{pU.2}
$g \in \lip([0,1]^{d})\cap C^1(U)$,
$\Lip(g)\le\max(\Lip(f),\norm{e})+\eta$
and $\inorm{g-f}\le\omega_0$,
\item\label{pU.4}
if a function $h\colon[0,1]^d\to\R$ satisfies
$\abs{h(x)-g(x)}\le 2\xi_0$ for all $x\in[0,1]^d$, then 
$\sup_{\norm{y}\le r}\abs{h(x+y)-h(x)-\spr{e}{y}}\le\eta r$ for all $x\in P$.
\end{enumerate}
\end{lemma}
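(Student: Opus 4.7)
The plan is to deduce Lemma~\ref{U} from Lemma~\ref{3x} above together with Lemma~2.9 of~\cite{maleva_preiss2018}. First, as noted immediately before the statement, every closed purely unrectifiable subset of $\R^d$ is uniformly purely unrectifiable (by M\'ath\'e's announcement, cf.~\cite{maleva_preiss2018,acp2010differentiability}), so Lemma~2.9 of~\cite{maleva_preiss2018} applies to $P$ directly, with no extra uniformity hypothesis needed.

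Applying that lemma to the data $f$, $P$, $H$, $e$ and sufficiently small auxiliary parameters produces a Lipschitz function $g\colon[0,1]^d\to\R$ and an open set $U$ with $P\subseteq U\subseteq H$ such that $g\in \lip([0,1]^{d})\cap C^1(U)$, $\nabla g(x)=e$ for every $x\in P$, $\Lip(g)\le\max(\Lip(f),\norm{e})+\eta$, and $\inorm{g-f}\le\omega_0$. This establishes conclusions~\eqref{pU.1} and~\eqref{pU.2} directly.

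To obtain the remaining conclusion~\eqref{pU.4}, I would apply Lemma~\ref{3x} to the function $g$, the set $P$, the open set $U$ (which plays the role of $H$ in Lemma~\ref{3x}) and the same $\omega_0$, $\eta$. The key point is that, by construction, $\nabla g(x)=e$ for every $x\in P$; consequently the inner product $\spr{\nabla g(x)}{y}$ appearing on the right-hand side of~\eqref{3x.2} coincides with $\spr{e}{y}$. Lemma~\ref{3x} therefore delivers constants $\xi_0,r_0\in(0,\omega_0/2]$ such that every $h\colon[0,1]^d\to\R$ with $\inorm{h-g}\le 2\xi_0$ satisfies $\abs{h(x+y)-h(x)-\spr{e}{y}}\le\eta r_0$ for all $x\in P$ and all $\norm{y}\le r_0$. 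Setting $r:=r_0$ then yields~\eqref{pU.4} and places both $\xi_0$ and $r$ inside $(0,\omega_0)$, as required.

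The main substantive content is thus already packaged in the invocation of Lemma~2.9 of~\cite{maleva_preiss2018}, whose proof rests on the Alberti--Cs\"ornyei--Preiss construction~\cite{acp2010differentiability}: for a uniformly purely unrectifiable set $P$, one may perturb $f$ by a Lipschitz map of arbitrarily small sup-norm so that the perturbed function has gradient exactly $e$ at every point of $P$, with controlled Lipschitz constant and with $C^1$-regularity preserved on an open neighbourhood of $P$. The only genuine obstacle in the deduction above is the simultaneous bookkeeping of the parameters: one must choose the auxiliary inputs to the cited lemma small enough in terms of $\omega_0$ and $\eta$ that the Lipschitz and sup-norm bounds in~\eqref{pU.2} are met, and then verify that Lemma~\ref{3x} is applicable with these outputs. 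Both steps are routine once the main construction is in hand.
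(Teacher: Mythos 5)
Your proposal matches the paper's treatment: the paper gives no independent proof of Lemma~\ref{U}, presenting it precisely as a restatement of \cite[Lemma~2.9]{maleva_preiss2018} for a compact purely unrectifiable $P$, with the simplified Lemma~\ref{3x} supplying the stability conclusion~\eqref{pU.4} exactly as you combine the two. The only cosmetic slip is attributing the passage from purely to uniformly purely unrectifiable to M\'ath\'e's announcement: that fact is not needed for the lemma itself, since uniform pure unrectifiability is already a hypothesis, and where it is needed (for Theorem~\ref{thm:nondiff-cov}) the paper instead uses that compact purely unrectifiable sets are automatically uniformly purely unrectifiable, citing \cite{maleva_preiss2018,acp2010differentiability}.
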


We are now ready to prove Theorem~\ref{thm:main:nondiff}, which we restate here, in a slightly different form, for the reader's convenience.
\begin{theorem}[restatement of Theorem~\ref{thm:main:nondiff}]\label{thm:nondiff-cov}
Let $P\subseteq (0,1)^{d}$ be an $F_{\sigma}$, purely unrectifiable set. Then a typical $f\in\lip_{1}([0,1]^{d})$ has no directional derivatives at every point of $P$ and, moreover, for a typical $f\in\lip_{1}([0,1]^{d})$ it holds that $\mathcal Df(x,v)=[-1,1]$ for every $x\in P$ and every $v\in\Sphere^{d-1}$.
\end{theorem}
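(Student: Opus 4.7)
The plan is to prove that the set
\[
\mathcal{R}:=\set{f\in\lip_1([0,1]^d) : \forall x\in P,\ \forall e\in\B,\ \liminf_{r\to 0}\sup_{\norm{y}\le r}\frac{\abs{f(x+y)-f(x)-\sk{e}{y}}}{r}=0}
\]
is residual in $\lip_1([0,1]^d)$, and then deduce the theorem from this. The deduction is quick: since $f$ is $1$-Lipschitz we always have $\mathcal Df(x,v)\subseteq[-1,1]$, while applying the defining property of $\mathcal R$ with $e=\alpha v$ for arbitrary $\alpha\in[-1,1]$ yields a sequence $r_n\searrow 0$ with $(f(x+r_nv)-f(x))/r_n\to\alpha$, giving $\mathcal Df(x,v)\supseteq[-1,1]$. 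The non-existence of directional derivatives at every $x\in P$ is then immediate because $\mathcal Df(x,v)$ is not a singleton.

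To establish residuality, decompose $P=\bigcup_n P_n$ with each $P_n\subseteq(0,1)^d$ compact and purely unrectifiable (hence uniformly purely unrectifiable by M\'athe's result recalled above). Fix a countable dense subset $\set{e_j}_{j=1}^\infty$ of the \emph{open} unit ball $\set{e\in\R^d:\norm{e}<1}$ and, for each $(n,j,k)\in\N^3$, set
\[
A_{n,j,k}:=\set{f\in\lip_1([0,1]^d) : \forall x\in P_n,\ \exists r\in(0,1/k)\text{ with }\sup_{\norm{y}\le r}\abs{f(x+y)-f(x)-\sk{e_j}{y}}<r/k}.
\]
Openness of $A_{n,j,k}$ follows from the upper semicontinuity of $(x,f)\mapsto\inf_{r\in(0,1/k)}\sup_{\norm{y}\le r}\abs{f(x+y)-f(x)-\sk{e_j}{y}}/r$ (as the infimum of jointly continuous functions of $(x,f)$) combined with compactness of $P_n$. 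Moreover $\bigcap_{n,j,k}A_{n,j,k}\subseteq\mathcal R$: for a general $e\in\B$ approximate it by some $e_j$ in the open unit ball and absorb the additional error $\norm{e-e_j}\cdot r$ into the liminf. Hence residuality of $\mathcal{R}$ reduces to denseness of each $A_{n,j,k}$.

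The heart of the argument is the density step, which uses Lemma~\ref{U}. Given $f_1\in\lip_1$ and $\varepsilon>0$, first invoke Lemma~\ref{lem:lip1} to obtain $f_2\in\lip_1$ with $\Lip(f_2)<1$ and $\inorm{f_1-f_2}<\varepsilon/3$; then convolve a McShane extension of $f_2$ with a smooth mollifier to produce $f_3\in\lip_1\cap C^1([0,1]^d)$ with $\Lip(f_3)<1$ and $\inorm{f_2-f_3}<\varepsilon/3$. Apply Lemma~\ref{U} with $P=P_n$, $H=(0,1)^d$, $f=f_3$, $e=e_j$, $\omega_0=\min(\varepsilon/3,1/k)$, and $\eta>0$ chosen small enough to satisfy both $\eta<1/(2k)$ and $\max(\Lip(f_3),\norm{e_j})+\eta<1$, which is possible precisely because $\Lip(f_3)<1$ and $\norm{e_j}<1$. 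The conclusion supplies $g\in\lip_1([0,1]^d)$ with $\inorm{g-f_3}\le\omega_0$ and a radius $r\in(0,\omega_0)\subseteq(0,1/k)$ such that, taking $h=g$, $\sup_{\norm{y}\le r}\abs{g(x+y)-g(x)-\sk{e_j}{y}}\le\eta r<r/k$ for every $x\in P_n$. Hence $g\in A_{n,j,k}$ with $\inorm{g-f_1}<\varepsilon$.

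The principal technical obstacle is the Lipschitz arithmetic around Lemma~\ref{U}: its conclusion only guarantees $\Lip(g)\le\max(\Lip(f),\norm{e})+\eta$, which generically exceeds $1$. Restricting the dense set $\set{e_j}$ to the open unit ball, together with the preliminary shrinkage producing $\Lip(f_3)<1$, provides the slack needed to place $g$ inside $\lip_1$ without a rescaling step that would perturb the affine approximation. That limiting to $\norm{e_j}<1$ is harmless is ensured by the closedness of $\mathcal Df(x,v)$ inside $[-1,1]$, which automatically recovers the endpoints $\pm 1$ as limits of nearby interior values.
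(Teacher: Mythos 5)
Your proof is correct, and it reaches the same strong conclusion as the paper, but by a genuinely different route. The paper also first reduces to $P$ closed and then leans on Lemma~\ref{U} as the engine, but it organises the argument as a Banach--Mazur game (via Theorem~\ref{thm:bm_balls}): Player~II iterates Lemma~\ref{U} along a dense sequence $(e_n)$ in the open unit ball, shrinking the radii $\rho_n\le\xi_n$ so that the unique function $h$ in the intersection of the chosen balls inherits property~\eqref{pU.4} of Lemma~\ref{U} for every $n$ simultaneously. You instead exhibit the target set directly as a countable intersection of the sets $A_{n,j,k}$ and verify openness and density separately. The trade-off is visible: the game formulation lets the paper avoid any openness argument (the quantification over $x\in P$ and the passage to the limit function are absorbed into the structure of the game), whereas your approach must pay for this with the upper-semicontinuity-plus-compactness (tube lemma) argument for openness of $A_{n,j,k}$ --- which you carry out correctly, the only cosmetic caveat being that $\sup_{\norm{y}\le r}$ should be read over those $y$ with $x+y\in[0,1]^d$ (or after extending $f$ to $\R^d$), exactly as in Lemma~\ref{U} itself. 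In return your argument is self-contained at the level of elementary Baire category and makes the residual set completely explicit. The remaining ingredients --- restricting to $\norm{e_j}<1$ so that the Lipschitz bound $\max(\Lip(f),\norm{e})+\eta$ of Lemma~\ref{U}\eqref{pU.2} stays below $1$, the preliminary $C^1$ smoothing with $\Lip<1$, taking $h=g$ in~\eqref{pU.4}, and recovering arbitrary $e$ in the closed ball from the dense family via $\liminf\le\norm{e-e_j}$ --- all match the paper's choices and are handled correctly.
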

\begin{proof} We may assume that $P$ is closed. Indeed, if the statement holds for $P$ closed, it extends immediately to countable unions of closed $P_{n}$ as follows: Letting $S_n=\text{NonD}(P_n)$ denote the collection of functions $f\in\lip_1([0,1]^d)$ which are non-differentiable at every point of $P_{n}$ in the very strong sense described in the statement of the theorem, we get that each $S_n$ is residual. Hence,
\[
\set{f\in\lip_1([0,1]^d)\colon
\mathcal Df(x,v)=[-1,1]
\text{ for any }
x\in \displaystyle \bigcup_{n=1}^{\infty}P_{n}
\text{ and }v\in\Sphere^{d-1}
}
\supseteq
\bigcap_{n\ge1}S_n
\] 
is residual too.

Let $P\subseteq(0,1)^d$ be a closed purely unrectifiable set and $S:=\text{NonD}(P)$. We now consider a Banach-Mazur game $G_{BM,\text{balls}}$ in $\Lip_1([0,1]^d)$ with the target set $S$ and show that Player~II has a winning strategy; by Theorem~\ref{thm:bm_balls} this will imply that $S$ is residual in $\Lip_1([0,1]^d)$. 

Assume $H_0=(0,1)^d$.
Fix a sequence $(e_n)$ of vectors with $\norm{e_n}<1$ such that the collection $(e_n)$ is dense in the unit ball $B(0,1)$. 
Let $g_0(x)=0$ for all $x\in[0,1]^d$ and $\omega_0=1$.

On reaching step $n$ in the Banach-Mazur game the two players would have constructed a nested sequence of open balls
and Player~II would have additionally defined a nested sequence of open sets $H_0\supseteq\dots\supseteq H_{n-1}\supseteq P$.

Assume $B(f_n,r_n)$ is the $n$th choice of Player~I. Using that smooth functions are dense in $C([0,1]^{d})$ followed by Lemma~\ref{lem:lip1}, we choose $f_n^{(1)}\in C^1([0,1]^{d})$ such that $\Lip(f_{n}^{(1)})<1$ and $\inorm{f_n-f_n^{(1)}}<r_n/2$. Choose $\eta_n\in(0,2^{-n})$ s.t. $\max(\Lip(f_n^{(1)}),\norm{e_n})+\eta_n<1$.
Let $\omega_{n}= \min(r_{n}/2,2^{-n})$.

Apply now Lemma~\ref{U} to $P$ and $H:=H_{n-1}$,
$\omega_0:=\omega_{n}$, $f:=f_n^{(1)}$, $e:=e_n$ and $\eta:=\eta_n$ to get function $g_n:=g\colon[0,1]^d\to\R$, $\xi_n:=\xi_0$, $\eps_n:=r\in(0,\omega_n)$
and an open set $H_n:=U$.

From Lemma~\ref{U}~\eqref{pU.2}, 
we have that $g_n\in\Lip_1([0,1]^d)$ and $\inorm{g_n-f_n^{(1)}}\le\omega_n\le r_n/2$,  hence $\inorm{g_n-f_n}<r_n$. 
Choose $\rho_n\in\left(0,\min\left(\xi_n,2^{-n}\right)\right)$ such that $\overline B(g_n,\rho_n)\subseteq B(f_n,r_n)$. Let Player~II's response be $B(g_n,\rho_n)$.

Since $\overline B(g_n,\rho_n)\subseteq B(g_{n-1},\rho_{n-1})$ and $\rho_n\to0$, we conclude that the intersection of balls $B(g_n,\rho_n)$ is a single function $h\in\Lip_1([0,1]^d)$. We now show that $h$ has no directional derivatives at any $x\in P$ and, moreover, $\mathcal Dh(x,v)\supseteq[-1,1]$ for every $x\in P$ and every $v\in\Sphere^{d-1}$. As it is clear that $\mathcal Dh(x,v)\subseteq[-1,1]$ from $\Lip(h)\le1$, this will imply the required equality.

 Indeed, fix any $x\in P$, $v\in\Sphere^{d-1}$ and $n\ge1$. Recall the application of Lemma~\ref{U} which provided $g_n=g$ and $\xi_n=\xi_0$.
 Since $\inorm{h-g}=\inorm{h-g_n}\le\rho_n\le\xi_n=\xi_0$, we see that $h$ satisfies condition~\eqref{pU.4} of
Lemma~\ref{U}. Hence
$\abs{h(x+y)-h(x)-\spr{e_n}{y}}
\le \eta_n \eps_n$ whenever $\norm{y}\le\eps_n$. 
In particular, letting $y=\eps_n v$, we get
\[\abs{\frac{h(x+\eps_n v)-h(x)}{\eps_n}-\spr{e_n}{v}}
\le \eta_n.\]
As the vectors $e_n$ form a dense subset of the closed ball $\overline B(0,1)$, $0<\eps_n\le \omega_n\le2^{-n}\to0$ and $0<\eta_n\le2^{-n}\to0$, we
get that $\mathcal Dh(x,v)\supseteq[-1,1]$, hence $\mathcal Dh(x,v)=[-1,1]$.
\end{proof}

\section{Comparison with vector-valued mappings}\label{section:proj}
For $d,l\in\N$ we denote by $\lipdl$ the space of Lipschitz mappings $f\colon [0,1]^{d}\to\R^{l}$ with $\lip(f)\leq 1$, viewed as a complete metric space with the supremum metric. In most of the paper, we have $l=1$ and abbreviate $\lipd$ to $\lip_{1}([0,1]^{d})$. Merlo~\cite{merlo} shows that whenever $d\leq l$ and $\A\subseteq (0,1)^{d}$ is a non-coverable set in the sense of Theorem~\ref{thm:main_result}, there is a residual set $S\subseteq \lipdl$ for which every mapping $f=(f_{1},\ldots,f_{l})\in S$ has a directional derivative in $\A$; see \cite{merlo} Proposition~3.3 and Theorem~2.8. At first glance, it may appear that this statement is closely related to Theorem~\ref{thm:main_result}. Indeed, for such non-coverable $\A\subseteq (0,1)^{d}$ and residual $S\subseteq \lipdl$, the natural projection mappings
\begin{equation*}
\rho_{j}\colon \lipdl\to\lipd,\qquad f=(f_{1},\ldots,f_{l})\mapsto f_{j},
\end{equation*}
for $j=1,\ldots,l$, give rise to sets $\rho_{1}(S),\ldots,\rho_{l}(S)\subseteq \lipd$ in which all functions have a directional derivative in $\A$. Since $S$ is residual in $\lipdl$, we might hope that the projections $\rho_{j}(S)$ are also large in some sense in $\lipd$ and therefore hope to obtain via \cite{merlo} a statement of the form of Theorem~\ref{thm:main_result} with full differentiability weakened to existence of a directional derivative. However, the next theorem demonstrates that this argument fails badly: even very large residual sets in $\lipdl$ may project to negligible sets in $\lipd$. Thus, Theorem~\ref{thm:main_result} and its implications in Theorems~\ref{thm:typ_ds} and~\ref{thm:typ_nds} are completely independent of \cite{merlo} for all dimensions $d\geq 2$.
\begin{restatable}{theorem}{proj}\label{thm:proj_category}
	Let $d,l\in\N$ with $l\geq 2$ and $\rho\colon \lipdl\to\lipd$ be the standard projection defined by
	\begin{equation*}
	\rho(f)=f_{1},\qquad f=(f_{1},\ldots,f_{l})\in\lipdl.
	\end{equation*}
	Then there exists an open, dense subset $U$ of $\lipdl$ for which the set $\rho(U)$ is of the first Baire category in $\lipd$.
\end{restatable}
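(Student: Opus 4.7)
The plan is to find $U \subseteq \lipdl$ open and dense whose image $\rho(U)$ is contained in a meagre subset $M$ of $\lipd$. The candidate meagre set is
\[ M := \bigcup_{V \in \mathcal{B}} \bigcup_{v \in \mathcal{V}} \bigcup_{n=1}^{\infty} \set{g \in \lipd : \lip_v(g|_V) \leq 1 - 1/n}, \]
where $\mathcal{B}$ is a countable base of open balls in $(0,1)^d$, $\mathcal{V}$ a countable dense subset of $\Sphere^{d-1}$, and $\lip_v(g|_V) := \sup\set{|g(x + tv) - g(x)|/t : x, x + tv \in V, t > 0}$ denotes the directional Lipschitz constant. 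Each constituent is closed by lower-semicontinuity of $\lip_v$ and nowhere dense: any $g$ inside can be perturbed to escape by first scaling $g \mapsto (1 - \delta) g$ (using Lemma~\ref{lem:lip1}) and then adding a short-scale oscillation $\epsilon \sin(N \langle v, x \rangle)$ of Lipschitz constant at most $\delta$ localized near $V$, thereby raising $\lip_v$ above $1 - 1/n$ without violating the overall $\lip \leq 1$ bound.

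The guiding principle is the coupling $|f_1(x) - f_1(y)|^2 + \sum_{j \geq 2} |f_j(x) - f_j(y)|^2 \leq \norm{x - y}^2$: if $f_2$ maintains a uniform lower slope $\alpha > 0$ along direction $v$ over some sub-ball, then $\lip_v(f_1|_{\text{sub-ball}}) \leq \sqrt{1 - \alpha^2} < 1$, placing $f_1 \in M$. To realise this via an open condition, define, for each $B \in \mathcal{B}$ with centre $x_B$, each $v \in \mathcal{V}$, and rationals $\alpha \in (0,1)$, $\eta > 0$, $c \in \R$,
\[ U_{B, v, \alpha, \eta, c} := \set{f \in \lipdl : \sup_{x \in B} |f_2(x) - \alpha \langle v, x - x_B \rangle - c| < \eta}, \]
and let $U$ be the union of these $U_{B, v, \alpha, \eta, c}$ over all (countably many) parameter choices. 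Density follows from a cutoff-perturbation strategy reminiscent of Lemma~\ref{lem:cond-C}: given $f \in \lipdl$ and $\epsilon > 0$, first approximate $f$ by $\tilde f$ with $\lip(\tilde f) \leq 1 - \delta$ (via Lemma~\ref{lem:lip1}); pick a ball $B$ of radius much smaller than $\epsilon$, set $c := \tilde f_2(x_B)$ and $\alpha := \delta$, and replace $\tilde f_2$ on $B$ by $\alpha \langle v, \cdot - x_B \rangle + c$, smoothing across an annular collar $B^+ \setminus B$ to preserve the Lipschitz-$1$ bound; the Lipschitz budget released by the scaling absorbs the added slope $\alpha$.

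For any $f \in U_{B, v, \alpha, \eta, c}$, the $L^\infty$-closeness forces, whenever $x, x + tv \in B$ with $t \geq 4\eta/\alpha$, the lower bound $|f_2(x+tv) - f_2(x)| \geq \alpha t - 2\eta \geq (\alpha/2) t$. Combined with the coupling, this yields $|f_1(x + tv) - f_1(x)| \leq \sqrt{1 - \alpha^2/4}\, t$ on such pairs. Passing to a sub-ball $B' \subset B$ chosen so that every admissible pair $x, x + tv$ in $B'$ satisfies $t \geq 4\eta/\alpha$, we obtain $\lip_v(f_1|_{B'}) \leq \sqrt{1 - \alpha^2/4} < 1$, i.e., $f_1 \in M$. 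The main obstacle is precisely this bridging step: the $L^\infty$-condition defining $U_{B, v, \alpha, \eta, c}$ is open but only gives slope control on long pairs, whereas the directional Lipschitz constant is a supremum over all pairs including infinitesimal ones. The resolution is to force $B'$ to be at a scale much smaller than $B$ relative to $\eta/\alpha$, which is legitimate thanks to the countable base $\mathcal{B}$ containing balls at arbitrarily small scales and to the freedom to choose $\eta$ and $\alpha$ independently. Hence $\rho(U) \subseteq M$ is first category, as required.
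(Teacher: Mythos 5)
Your open set $U$ and your target meagre set $M$ do not fit together: the inclusion $\rho(U)\subseteq M$ fails, and the failure sits exactly where you flag ``the main obstacle''. Membership in $U_{B,v,\alpha,\eta,c}$ is a sup-norm condition on $f_{2}$, so via the coupling it controls difference quotients of $f_{1}$ only over pairs $x,x+tv$ with $t\geq 4\eta/\alpha$; but $M$ is defined through $\lip_{v}(f_{1}|_{B'})$, a supremum over \emph{all} pairs in $B'$, including arbitrarily short ones. Your proposed fix --- choose $B'$ ``so that every admissible pair $x,x+tv$ in $B'$ satisfies $t\geq 4\eta/\alpha$'' --- is vacuous: a ball is convex, so whenever $x,x+tv\in B'$ one also has $x,x+sv\in B'$ for every $0<s<t$; no ball excludes short pairs. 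The gap is not merely technical. Take $d=1$, $l=2$, $v=1$; let $I_{k}$ be the interval of length $2^{-k}\eta/10$ centred at the $k$-th rational of $(0,1)$ and $A=\bigcup_{k}I_{k}$, so $\leb(A)<\eta/\alpha\cdot\alpha$ is as small as we like. Put $f_{1}(x)=\leb([0,x]\cap A)$ and $f_{2}(x)=c+\alpha(x-x_{B})-\alpha\leb([0,x]\cap A)$. Then $f_{1}'=1$, $f_{2}'=0$ on $A$ and $f_{1}'=0$, $f_{2}'=\alpha$ off $A$, so $\abs{f_{1}'}^{2}+\abs{f_{2}'}^{2}\leq 1$ a.e.\ and $f=(f_{1},f_{2})\in\lipdl$; moreover $\sup_{B}\abs{f_{2}-c-\alpha(\cdot-x_{B})}\leq\alpha\leb(A)<\eta$, so $f\in U_{B,v,\alpha,\eta,c}\subseteq U$. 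Yet every ball $B'$ contains some $I_{k}$ entirely, on which $f_{1}$ has slope exactly $1$, whence $\lip_{v}(f_{1}|_{B'})=1$ for every $B'$ and $f_{1}=\rho(f)\notin M$. No rescaling of $B'$, $\eta$ or $\alpha$ repairs this. (A secondary issue: your nowhere-density argument for $\set{g\colon\lip_{v}(g|_{V})\leq 1-1/n}$ also does not work as written --- adding an oscillation of Lipschitz constant $\delta$ to $(1-\delta)g$ raises $\lip_{v}$ by at most $\delta$, which need not reach $1-1/n$ when $\lip_{v}(g|_{V})$ is small. The sets are indeed nowhere dense, but one must graft in a genuinely $1$-sloped affine piece, e.g.\ by a $\min/\max$ truncation of $g$ against $g(x_{0})+\sk{v}{\cdot-x_{0}}$ near a point $x_{0}\in V$.)

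The paper's proof is built precisely to avoid this loss of control at small scales. It does not attempt to land $\rho(U)$ in a meagre set defined by slope conditions; it writes $U=\bigcup_{f\in\Gamma}B(f,\delta_{f})$ over a countable dense family and proves that each $\overline{\rho(B(f,\delta_{f}))}=\rho(\overline{B}(f,\delta_{f}))$ --- the equality coming from Arzel\`a--Ascoli compactness of the closed ball --- has empty interior. The slope information is then extracted by Lemma~\ref{lem:easy_steep}, an integral argument: sup-norm closeness to a function with a full-slope pair forces $\abs{(g_{2}\circ\gamma)'}\geq 3/4$ on a set of \emph{positive measure}, and the contradiction with $\abs{(g_{1}\circ\gamma)'}=1$ a.e.\ (supplied by Lemma~\ref{lem:easy_density} and the density of piecewise isometries) is obtained almost everywhere rather than at every scale. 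An a.e.-derivative, positive-measure formulation is stable under sup-norm perturbation; the pointwise $\lip_{v}$ formulation you chose is not. If you wish to keep your architecture, you would have to redefine $M$ through almost-everywhere directional derivatives on sets of positive measure, and proving that such a set is meagre while still containing $\rho(U)$ essentially reproduces the paper's compactness-plus-integration argument.
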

Note that Theorem~\ref{thm:proj_category} also provides an example of a residual subset $S$ of $\lipd$ whose preimage $\rho^{-1}(S)$ under the projection $\rho$ is nowhere dense in $\lipdl$; we may take $S=\lipd\setminus \rho(U)$. For the proof of Theorem~\ref{thm:proj_category}, we require two simple lemmata:
\paragraph{Notation.} In what follows we use again the notation $I_{\eta}(t)$, introduced in Section~\ref{sec:typ_diff_curves}, to denote the open interval $(t-\eta,t+\eta)$.
\begin{lemma}\label{lem:easy_density}
	Let $d,l\in\N$, $\gamma\colon[0,1]\to(0,1)^{d}$ be the length parameterisation of a line segment, $\P$ be a dense subset of $\lipint$, $t_{0}\in (0,1)$, $f=(f_{1},\ldots,f_{l})\in\lipdl$ be mapping with $\lip(f)<1$, $\eps\in(0,1)$ and $j\in\set{1,\ldots,l}$. Then there exist $p\in \P$, $\eta>0$ and $g=(g_{1},\ldots,g_{l})\in\lipdl$ such that
	\begin{enumerate}[(i)]
		\item\label{cond_g-f_supnorm} $\norm{g(x)-f(x)}\leq \eps$ for all $x\in [0,1]$,
		\item\label{cond_g_rest_gamma} $g_{j}\circ \gamma|_{I_{\eta}(t_{0})}=p|_{I_{\eta}(t_{0})}$,
		\item\label{cond_g_res_gamma_1} $g_{1}\circ \gamma=p$ if $l=1$.
	\end{enumerate}	
\end{lemma}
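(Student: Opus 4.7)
The plan is to first exploit density to pick $p \in \P$ close to $f_j\circ\gamma$, then to construct $g$ by a localised modification of $f$. Since $\gamma$ is the $1$-Lipschitz length parameterisation of a line segment and $\lip(f) < 1$, the composition $f_j\circ\gamma$ lies in $\lipint$ with Lipschitz constant strictly less than~$1$; density of $\P$ in $\lipint$ provides $p\in\P$ with $\inorm{p - f_j\circ\gamma} < \eps/4$, and $p$ itself is $1$-Lipschitz.

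For the simpler case $l = 1$ (condition (iii)), I would define
\[
g_1(x) := \max\!\Bigl(f_1(x) - \eps,\; \min\Bigl(f_1(x) + \eps,\; \inf_{s\in[0,1]}\bigl[\,p(s) + \norm{x - \gamma(s)}\,\bigr]\Bigr)\Bigr).
\]
The inner infimum is $1$-Lipschitz (as the infimum of a family of $1$-Lipschitz functions of $x$) and, since $p$ is $1$-Lipschitz and $\gamma$ is an isometry onto its image, it equals $p(t)$ at $x = \gamma(t)$. The outer max--min clipping keeps $g_1$ inside the $\eps$-tube around $f_1$ while preserving $1$-Lipschitzness. The inequality $\inorm{p - f_1\circ\gamma} < \eps/4 < \eps$ makes the clipping inactive at points of $\gamma$, so $g_1\circ\gamma = p$ on all of $[0,1]$; conditions (i) and (iii) then follow, and (ii) holds trivially for any $\eta > 0$.

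For arbitrary $l$ and condition (ii), I would mimic this construction on a small interval. Fixing a small $\eta > 0$ and replacing the global infimum by $\inf_{s \in \overline{I_\eta(t_0)}}[p(s) + \norm{x - \gamma(s)}]$ yields a scalar $g_j$ that is $1$-Lipschitz, lies within $\eps$ of $f_j$, and satisfies $g_j\circ\gamma = p$ on $I_\eta(t_0)$ by the same clipping argument. The remaining components of $g$ cannot simply be left equal to $f_i$, however, because the joint Lipschitz bound may then fail. I would therefore also modify the other components inside a narrow tubular neighbourhood of $\gamma(I_\eta(t_0))$, making $g_i\circ\gamma$ near-constant on $I_\eta(t_0)$ for $i \neq j$ and interpolating back to $f_i$ near the boundary of the tube.

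The main obstacle I anticipate is verifying $\lip(g) \le 1$ in the Euclidean $\R^l$-sense when $l \ge 2$. Simply replacing $f_j$ by the scalar $g_j$ above while keeping $g_i = f_i$ does not work: if $p'(t)$ is close to $\pm 1$ somewhere on $I_\eta(t_0)$, the joint derivative vector of $(f_1,\ldots,g_j,\ldots,f_l)\circ\gamma$ can have Euclidean norm up to $\sqrt{1 + \lip(f)^2} > 1$. To get around this, one uses the Lipschitz slack $1 - \lip(f)$ to force the other components to vary slowly along $\gamma$ inside the tube, and balances the tube radius, the interpolation length and $\eta$ against $\eps$ and the deficit $1 - \lip(f)$ so that both $\inorm{g - f} \le \eps$ and the joint $1$-Lipschitz bound hold. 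Checking this joint Lipschitz bound carefully in the interpolation annulus is the technically delicate step.
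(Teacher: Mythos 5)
Your $l=1$ construction is correct and takes a genuinely different route from the paper's: the inf-convolution $\inf_{s}\left[p(s)+\norm{x-\gamma(s)}\right]$, clipped into the $\eps$-tube around $f_1$ by the max--min, is $1$-Lipschitz and agrees with $p$ along $\gamma$ because $\gamma$ is an isometry and the clipping is inactive on $\image(\gamma)$; this is a clean, self-contained argument for the scalar case.

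The gap is in the case $l\ge 2$, which is the case the lemma is actually needed for in Theorem~\ref{thm:proj_category}. You correctly identify that the joint Euclidean Lipschitz bound is the crux and correctly propose making $g_i\circ\gamma$ near-constant on $I_\eta(t_0)$ for $i\neq j$, but you then stop: declaring that ``checking this joint Lipschitz bound carefully in the interpolation annulus is the technically delicate step'' states the remaining problem rather than solving it, and that step is the heart of the lemma. The paper sidesteps the explicit interpolation entirely: it defines $g$ only on $\gamma(J_{l})$ (with $g_j\circ\gamma=p$ and $g_i\circ\gamma\equiv f_i(\gamma(t_0))$ for $i\neq j$) and on the complement of the tube $B(\gamma(J_{l}),\sigma)$ (where $g=f$), verifies the $1$-Lipschitz inequality for one point from each of these two sets by a single computation exploiting $\norm{y-x}\ge\sigma$ and the slack $1-\lip(f)^2$, and then invokes Kirszbraun's extension theorem to fill in the annulus without increasing the Lipschitz constant. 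Without Kirszbraun (or an equally concrete vector-valued extension whose constant you verify), your plan leaves the essential estimate unproved. A further quantitative issue: your tolerance $\inorm{p-f_j\circ\gamma}<\eps/4$ is too coarse for the cross-term estimate to close; the perturbation along the curve must be small relative to $\sigma^{2}(1-\lip(f)^{2})$, which is why the paper takes it of order $\eps^{2}(1-\lip(f)^{2})$ and couples $\sigma$ to it accordingly.
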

\begin{proof}
	Let $\eta,\sigma>0$ be defined by
	\begin{equation}\label{eq:eta_sigma}
\eta:=\frac{(1-\lip(f)^{2})\eps^{2}}{128\sqrt{d}}, \qquad  \sigma:=\left(\frac{8\sqrt{d}\cdot\eta}{1-\lip(f)^{2}}\right)^{1/2}=\frac{\eps}{4},
	\end{equation}
	choose $p\in\P$ such that 
	\begin{equation}\label{eq:p_approx_f}
	\abs{p(t)-f_{j}(\gamma(t))}\leq \eta \qquad \text{for all }t\in [0,1]
	\end{equation}
	and set
	\begin{equation*}
	J_{l}:=\begin{cases}
	I_{\eta}(t_{0}) & \text{ if }l>1,\\
	[0,1] & \text{ if }l=1.
	\end{cases}
	\end{equation*}
	We define $g=(g_{1},\ldots,g_{l})$ initially on a subset of $[0,1]^{d}$ co-ordinatewise by
	\begin{align}
	g_{j}(x)&=\begin{cases}
	p(t) & \text{if }x=\gamma(t),\, t\in J_{l},\\
	f_{j}(x) & \text{if }x\in [0,1]^{d}\setminus B(\gamma(J_{l}),\sigma),\text{ and}
	\end{cases} \label{eq:g_initial}\\
	g_{i}(x)&=\begin{cases}
	f_{i}(\gamma(t_{0})) & \text{if }x=\gamma(t),\, t\in J_{l},\\
	f_{i}(x) & \text{if }x\in [0,1]^{d}\setminus B(\gamma(J_{l}),\sigma).
	\end{cases}\nonumber
	\end{align}
	for $i\in\set{1,\ldots,l}\setminus\set{j}$. The remainder of the proof is designed primarily for the more complicated case $l>1$. However, it also applies to the case $l=1$; observe that in this case we necessarily have $j=1$ and all sums over $i\neq j$ disappear.

Note that $g|_{[0,1]^{d}\setminus B(\gamma(J_{l}),\sigma)}$ and $g|_{\gamma(J_{l})}$ are $1$-Lipschitz, where the latter case relies heavily on the fact that $\gamma$ is a length parameterisation of a line segment. To verify that this initially defined mapping is globally $1$-Lipschitz on its entire domain, we observe, for $x=\gamma(t)$, $t\in J_{l}$ and $y\in [0,1]^{d}\setminus B(\gamma(J_{l}),\sigma)$,
\begin{multline*}
\norm{g(y)-g(x)}^{2}\leq\sum_{i\neq j}(\abs{f_{i}(y)-f_{i}(x)}+\eta)^{2}+(\abs{f_{j}(y)-f_{j}(x)}+\eta)^{2}\\
\leq \norm{f(y)-f(x)}^{2}+4\sqrt{d}\cdot\eta+2\eta^{2}\leq\left(\lip(f)^{2}+\frac{8\sqrt{d}\cdot\eta}{\sigma^{2}}\right)\norm{y-x}^{2}
= \norm{y-x}^{2},
\end{multline*}
using~\eqref{eq:p_approx_f}, $t\in J_{l}$ and~\eqref{eq:eta_sigma}. By Kirszbraun's Theorem~\cite[Hauptsatz~I]{Kirszbraun1934}, \cite[2.10.43]{Fed}, we may now extend $g$ to the whole of $[0,1]^{d}$ without increasing its Lipschitz constant. Thus, we obtain a mapping $g\in\lipdl$. Note that this mapping $g$ satisfies conclusions~\eqref{cond_g_rest_gamma} and~\eqref{cond_g_res_gamma_1} of the lemma due to~\eqref{eq:g_initial}. To verify conclusion~\eqref{cond_g-f_supnorm}, we first note that the inequality of~\eqref{cond_g-f_supnorm} is trivially valid for all $x\in [0,1]^{d}\setminus B(\gamma(J_{l}),\sigma)$, where we have $f(x)=g(x)$. In the remaining case, $x\in B(\gamma(J_{l}),\sigma)$, we may choose $t\in J_{l}$ with $\norm{x-\gamma(t)}\leq\sigma$. We then derive
\begin{multline*}
\norm{g(x)-f(x)}\leq 2\sigma+\norm{g(\gamma(t))-f(\gamma(t))}\\=2\sigma+\left(\sum_{i\neq j}\abs{f_{i}(\gamma(t_{0}))-f_{i}(\gamma(t))}^{2}+\abs{p(t)-f_{j}(\gamma(t))}^{2}\right)^{1/2}\leq 2\sigma+\sqrt{d}\cdot\eta\leq \eps,
\end{multline*}
using~\eqref{eq:p_approx_f}, $t\in J_{l}$ and~\eqref{eq:eta_sigma}. This verifies~\eqref{cond_g-f_supnorm} and completes the proof of the lemma.
\end{proof}

\begin{lemma}\label{lem:easy_steep}
	Let $f\in\lipint$, $s<t\in [0,1]$, $\tau,\eps\in(0,1)$ and suppose that
	\begin{equation*}
	\norm{f(t)-f(s)}=t-s.
	\end{equation*}
	Then there exists $\delta>0$ such that for every $g\in\lipint$ with $\inorm{g-f}\leq \delta$ the set
	\begin{equation*}
	C:=C_{g,\tau,s,t}=\set{r\in [s,t]\colon g'(r)\geq\tau}
	\end{equation*}
	has positive Lebesgue measure $\leb(C)\geq (1-\eps)(t-s)$.
\end{lemma}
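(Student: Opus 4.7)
The hypothesis combined with the $1$-Lipschitz property of $f$ forces $f$ to be affine of slope $\pm 1$ on $[s,t]$. Replacing $f$ and $g$ by $-f$ and $-g$ if necessary (which preserves $\inorm{g-f}$ and the set $C$ up to sign of $g'$), I may assume $f(t)-f(s)=t-s$, so that $f(r)=f(s)+(r-s)$ on $[s,t]$. The plan is then to recover a large lower bound on $g(t)-g(s)$ from the uniform closeness of $g$ to $f$, and to combine this with the fact that $g'\le 1$ almost everywhere via a simple integral estimate to force $g'$ to be at least $\tau$ on almost all of $[s,t]$.

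Concretely, I would set $\delta:=\frac{\eps(1-\tau)(t-s)}{4}$ and let $g\in\lipint$ satisfy $\inorm{g-f}\le\delta$. Then
\[
g(t)-g(s)\ge \bigl(f(t)-f(s)\bigr)-2\delta=(t-s)-2\delta.
\]
Since $g$ is $1$-Lipschitz, $g$ is differentiable almost everywhere with $\abs{g'}\le 1$, so by the fundamental theorem of calculus for Lipschitz functions,
\[
g(t)-g(s)=\int_s^t g'(r)\,dr=\int_C g'(r)\,dr+\int_{[s,t]\setminus C} g'(r)\,dr\le \leb(C)+\tau\bigl((t-s)-\leb(C)\bigr).
\]
Combining the two displays and solving for $\leb(C)$ gives
\[
\leb(C)\ge (t-s)-\frac{2\delta}{1-\tau}=(t-s)-\frac{\eps(t-s)}{2}\ge (1-\eps)(t-s),
\]
as required.

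The argument is routine; there is no real obstacle once one observes that the extremality condition $\norm{f(t)-f(s)}=t-s$ pins down the slope of $f$ on $[s,t]$ and that the one-sided $L^\infty$-perturbation of $g$ from $f$ only costs a controlled $2\delta$ in the integrated slope. The only small point to get right is the sign convention: the statement is formulated with $g'(r)\ge\tau$ rather than $\abs{g'(r)}\ge\tau$, which implicitly presumes that the direction of maximal slope of $f$ has been fixed; otherwise one replaces $g$ by $-g$ as above.
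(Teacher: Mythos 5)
Your proof is correct and follows essentially the same route as the paper's: bound $g(t)-g(s)$ from below by $(t-s)-2\delta$, split $\int_s^t g'$ over $C$ and its complement using $g'\le 1$ and $g'<\tau$ respectively, and rearrange (the paper takes $\delta=\tfrac{(1-\tau)(t-s)\eps}{2}$, which also suffices). Your explicit remark about the sign convention is a fair point: the paper's proof silently assumes $f(t)-f(s)=t-s$ rather than $\abs{f(t)-f(s)}=t-s$, and your reduction via $f\mapsto -f$, $g\mapsto -g$ is exactly the right fix.
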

\begin{proof}
	We verify that the assertion of the lemma holds with
	\begin{equation*}
	\delta:=\frac{(1-\tau)(t-s)\eps}{2}.
	\end{equation*}
	Let $g\in\lipint$ with $\inorm{g-f}\leq\delta$. Then
	\begin{multline*}
	t-s-2\delta\leq g(t)-g(s)=\int_{s}^{t}g'(r)\,dr\\
	\leq \int_{[s,t]\setminus C}g'(r)\,dr+\int_{C}g'(r)\,dr\leq \tau(t-s-\leb(C))+\leb(C).
	\end{multline*}
	Rearranging, we obtain
	\begin{equation*}
	\leb(C)\geq t-s-\frac{2\delta}{1-\tau}=(1-\eps)(t-s).
	\end{equation*}
\end{proof}
We are now ready to prove Theorem~\ref{thm:proj_category}.
\begin{proof}[Proof of Theorem~\ref{thm:proj_category}]
	Let $\P$ denote the set of piecewise isometric functions $[0,1]\to\R$ with only finitely many points of non-differentiability. Recall that $\P$ is a dense subset of $\lip_{1}([0,1])$; see \cite{preiss_tiser94}. 
	Let $\Omega$ denote the set of all mappings $f=(f_{1},\ldots,f_{l})\in\lip_{1}([0,1]^{d})$ for which there exist $t_{0}\in (0,1)$, $\eta>0$ and $p\in\P$ such that $f_{2}\circ\gamma|_{I_{\eta}(t_{0})}=p|_{I_{\eta}(t_{0})}$. By Lemma~\ref{lem:easy_density}, the set $\Omega$ is dense in $\lipdl$. We additionally fix a countable, dense subset $\Gamma$ of $\Omega$ and emphasise that $\Gamma$ is trivially also dense in $\lipdl$.

	Let $f\in\Gamma$ and let $t_{0}\in(0,1)$, $\eta>0$ and $p\in\P$ witness that $f\in\Omega$. Since $f_{2}\circ\gamma|_{I_{\eta}(t_{0})}=p|_{I_{\eta}(t_{0})}$ and $p\in\P$, there exist points $s_{f}<t_{f}\in I_{\eta}(t_{0})$ such that $\abs{f_{2}\circ\gamma(t_{f})-f_{2}\circ\gamma(s_{f})}=t_{f}-s_{f}$. Let $\delta_{f}>0$ be given by the conclusion of Lemma~\ref{lem:easy_steep} applied to $f_{2}\circ \gamma\in\lipint$ $s_{f}<t_{f}$, $\tau=3/4$ and $\eps=1/4$. The required open dense subset of $\lipdl$ is now defined by 
	\begin{equation*}
	U=\bigcup_{f\in \Gamma}B(f,\delta_{f}).
	\end{equation*}
To verify that $\rho(U)$ is of the first Baire category in $\lipd$, it suffices to show that each set $\overline{\rho(B(f,\delta_{f}))}$ with $f\in\Gamma$ has empty interior. We fix $f\in\Gamma$. First, observe that
	\begin{equation}\label{eq:closure_claim}
	\overline{\rho(B(f,\delta_{f}))}=\rho(\overline{B}(f,\delta_{f})).
	\end{equation}
	This follows immediately from the continuity of $\rho$ and the fact that $\overline{B}(f,\delta_{f})$ is compact in $\lipdl$, where the latter is a consequence of the Arzel\`{a}-Ascoli Theorem.
	
	Assume that the set given in~\eqref{eq:closure_claim} has non-empty interior. We complete the proof by deriving a contradiction. Fix a function $\widetilde{f}\in \interior\rho(\overline{B}(f,\delta_{f}))$ with $\lip(\widetilde{f})<1$. By Lemma~\ref{lem:easy_density} applied to $\widetilde{f}\in\lipd$ and $l=1$, there exist $q\in\P$ and a function $g_{1}\in \rho(\overline{B}(f,\delta_{f}))$ such that $g_{1}\circ \gamma=q$. Let $(g_{2},\ldots,g_{l})\in\lip_{1}([0,1]^{d},\R^{l-1})$ be such that $(g_{1},g_{2},\ldots,g_{l})\in \overline{B}(f,\delta_{f})$. Then $\inorm{g_{2}\circ \gamma-f_{2}\circ \gamma}\leq \delta_{f}$. Therefore, by the choice of $\delta_{f}$ and Lemma~\ref{lem:easy_steep}, we obtain a set 
	\begin{equation*}
	C=C_{g_{2},3/4,s_{f},t_{f}}\subseteq [s_{f},t_{f}],
	\end{equation*}
	of Lebesuge measure at least $(1-\eps)(t_{f}-s_{f})=3(t_{f}-s_{f})/4>0$, on which $g_{2}\circ \gamma$ is differentiable with $\abs{(g_{2}\circ \gamma)'(t)}\geq 3/4$ for all $t\in C$. However, at all but finitely many points $t\in [0,1]$ we have $\abs{(g_{1}\circ\gamma)'(t)}=\abs{q'(t)}=1$. Therefore, all but finitely many $t\in C$ satisfy
	\begin{equation*}
	\abs{(g_{1}\circ\gamma)'(t)}^{2}+\abs{(g_{2}\circ\gamma)'(t)}^{2}\geq 1+(3/4)^{2}>1.
	\end{equation*}
	Recalling that $\gamma$ is the length parametrisation of a line segment, we see that this is clearly incompatible with $g$ being $1$-Lipschitz.
\end{proof}

\bibliographystyle{plain}
\bibliography{biblio}
\enddocument